\documentclass[a4paper, 12 pt]{article}
\usepackage[utf8]{inputenc}
\usepackage{amssymb, latexsym, amsmath, amsthm, mathrsfs}
\usepackage{mathpazo,stmaryrd}

\usepackage{enumitem}
\usepackage{multirow}
\usepackage{hyperref}
\usepackage{soul}

\usepackage{xcolor}
\usepackage{tikz}
\usetikzlibrary{arrows}

\usepackage{authblk}

\newtheorem{thm}{Theorem}[section]
\newtheorem{definition}{Definition}[section]
\newtheorem{lemma}{Lemma}[section]
\newtheorem{cor}{Corollary}[section]
\newtheorem{obs}{Observation}[section]

\newcommand{\ogeq}{\ogreaterthan}
\newcommand{\ssq}{\subseteq}

\title{The (Relevant) Logic of Scientific Discovery}
\author[1]{Timothy Childers}
	
\author[1]{Ondrej Majer} 

\author[2] {Peter Milne} 

\affil[1]{\small Czech Academy of Sciences, Institute of Philosophy, Prague\\
	childers@flu.cas.cz, majer@flu.cas.cz}
\affil[2]{Dept. of Philosophy, University of Stirling, 
	Stirling\\
	peter.milne@stir.ac.uk}
\begin{document}
\date{}
\maketitle

\begin{center}
\Large Part I: Semantics
\end{center}

\section{Introduction}
This paper presents a novel, thorough-going interpretation of some relevant logics. The interpretation employs an idealized modelling of the search for regularities in scientific inquiry. Laboratories (research teams) set up, carry out, and assess experiments, and thereby arrive at regularities or their absence. The modelling captures the distinction between confirmation and refutation, a distinction which motivates a rejection of contraposition.

We found our interpretation on relevant logics built over Dunn--Belnap four-valued semantics for negation, conjunction and disjunction. We employ two accessibility relations: one describing confirmation of a regularity, the second disconfirmation. It is this fine-grained approach that necessitates the inadmissibility of contraposition.

While novel our interpretation is not without anticipations of some of the details. There have been many nods to interpretations similar to ours---for example, in \cite{Dunn1976,Belnap1977,Belnap1977a,Belnap1992}---and formally our account falls under what Richard Sylvan called ``the American plan completed'' \cite{Routley1984}. However, while Sylvan also employs two accessibility relations, his system is significantly more complicated, in the main because he sets out to preserve contraposition. There are only a few other examples of a four-valued logic explicated in terms of two accessibility relations. Recently Takuro Onishi \cite{Onishi2016} has employed the second relation to provide a semantics for co-implication.

Finally, data is messy. We begin to tackle this in the second part of this paper by adding probabilities to the logical framework. We begin with probabilities over Dunn-Belnap logics, and show how they can be interpreted as relative frequencies and betting quotients. We consider a number of updating strategies, including updating for groups of labs. 



\section{Motivation}\label{motive}

We begin with laboratories, or, more broadly, teams or groups of researchers, that learn, and then share what they've learned with other laboratories or teams. To keep things brief, we'll talk about laboratories but the model has wider application. The laboratories' endeavours can be divided into four stages. First, and obviously, there is a planning stage. This is followed by the initiating event, for example, the sending out of a signal, the administration of a treatment regime, the delivery of questionnaires. Then there's a follow-up event, possibly distant in space and/or time, for example, the receipt of the signal, the collection of data on the effect of the treatment regime, the gathering in of completed questionnaires. Finally, the previous stages are evaluated. All four stages may be carried out by the same individual or team, or there may be some division of labour, perhaps widely distributed so that, say, multiple signals are sent out and received at various sites, groups of patients with different medical histories are subject to the same treatment regime, questionnaires are distributed to a variety of sample populations in diverse locales.

We can reduce the four stages to three by ignoring the planning stage. This seems reasonable: while, on the one hand, an experiment may be proposed and only decades later carried out, the team evaluating the results has to know what the initiating and follow-up teams report, how they are co\"{o}rdinated, and how to evaluate those reports; it does not need to know what prompted the initiating and follow-up teams to do what they did.\footnote{Teams need not be identified with particular sets of individuals. Certainly, they should not be in long-term longitudinal studies such as the UK's on-going National Child Development Study which focuses on over 17,000 people born in a single week in 1958.}

We can then describe our labs at the various stages with a ternary relation. We will say that $R_{1}xyz$ iff \\
\begin{quote}
$x$ collates and evaluates the data obtained from a test set-up initiated by $y$ and followed up by $z$.
\end{quote}

In searching for regularities the laboratories look for perfect correlations between the initiating events of the experiment and the outcomes of that experiment. The labs seek to determine if a given experimental set-up always leads to a particular outcome, and never leads to an alternative outcome. This is naturally represented by a conditional:

\begin{quote}
the conditional $A \rightarrow B$ is true from the point of view of a lab $s$, if, for every pair of labs $\langle t, u \rangle$ in $s$'s purview testing the regularity, $t$ initiating and $u$ following up, when lab $t$ reports that $A$ is true,  lab $u$ reports that $B$ is  true.
\end{quote}

\noindent In our framework, the search for regularities supervenes on tests for true conditionals.\footnote{This, we should emphasise, is our take on scientific \emph{practice}. It can be thought of as parasitic upon the the widespread but also widely derided reading of scientific laws and regularities as quantified conditionals of which the paradigm in the literature seems to be `All ravens are black'. In \cite{Friend2017} Toby Friend `offer[s] some argument for the view that laws do indeed have a quantified conditional form.' It's important to note that Friend's argument `for the conditional feature of the schema of laws aims to show that a conditional is implicit in our very understanding of laws, even when they do not appear to explicitly have that form' \cite[p.\ 127]{Friend2017}. A claim such as that chocolate consumption is inversely associated with prevalent
coronary heart disease (which may be the case \cite{Djousse2011}) can be turned into a generalization---{}a ``law''---{}concerning cross-sectional studies.} 

But we need to say a little more about what `true' amounts to in this setting. Lab reports may be positive and negative of course: that the experiment was set up in the proper way or not, or that an outcome was observed or not. But they can also be null: experiments might not be set up at all, or outcomes might not be observed at all (equipment breaks down, subjects aren't compliant, etc.). And the same labs can also report contradictory findings (equipment is on the blink, a subject is schizophrenic).\footnote{``In some cases, and perhaps in all, observations of two sufficiently similar entities are inconsistent when the same comparison is repeated several times.'' \cite[p.\ 3]{Krantz1971}} This leads to four possible values, \textit{true}, \textit{false}, \textit{neither true nor false}, \textit{both true and false}.\footnote{Readers of a nervous disposition may substitute `reported true' or `adjudged true' or `accepted as true' for our use of `true' and likewise, \textit{mutatis mutandis}, for our use of `false'. \textit{Cf.} Nuel Belnap's `told True' and `told False' in \cite{Belnap1977} and Michael Dunn's disclaimer:
\begin{quote}
Do not get me wrong---{}I am not claiming that there are sentences which are in fact both true and false. I am merely pointing out that there are plenty of situations where we suppose, assert, believe, etc., contradictory sentences to be true, and we therefore need a semantics which expresses the truth conditions of contradictions in terms of the truth values that the ingredient sentences would have to take for the contradictions to be true. \cite[p.\ 157]{Dunn1976}
\end{quote}}

Of special significance for our story  will be a subset of $S$ of well-behaved labs, picked out by simple criteria. Firstly, labs work on multiple projects. A lab that has oversight where one set of experiments is concerned may be supplying data to another testing other regularities. Experimentation is organized and regulated activity. For pairs of labs (or research groups or experimental teams), one initiating the other completing diverse experiments, possibly overseen by diverse others, the completing lab has to know what the initiating lab has done---{}for only so will it know it has work to do, data to collect and communicate to the relevant overseeing lab. Thus when $x$ is a well-behaved lab making a good job of its co\"{o}rdinating role, and $R_{1}xyz$, either by making the same determinations itself or by having them reported to it, $z$ must have made the same acceptances as true and false as $y$. We denote the fact that, for some well-behaved lab $x$, $R_{1}xyz$ by $y \le z$. In effect, this is an information ordering on labs: $z$ has all the information $y$ has (and possibly more since $z$ has to complete any experiment $y$ has initiated under $x$'s oversight). So read, we should expect it to be reflexive and transitive---{}a pre-order. Secondly, some regularities---{}\textit{e.g.} anything of the form $A \to A$---{}are laws of logic. Any well-behaved lab should adjudge \emph{those} regularities as true. $L$ designates the (non-empty!) subset of $S$ comprising the well-behaved labs.


All labs conduct "equipment checks" -- calibrating instruments, cleaning test tubes, approving questionnaires. We assume that a well-behaved lab has oversight of these activities. That is

\begin{enumerate}[label=(\alph*)]
\item for all $x \in S$, $(\exists u \in L)R_{1}uxx$.
\end{enumerate}


Suppose that $u$ is a well-behaved lab, that $R_{1}uxy$ and that $y \leq z$. Then the $z$ team, having made all the determinations, whether off its own bat or by having them reported to it, that $y$ has, can stand in for $y$ in supplying $u$ with the data it needs about the test initiated by $x$. That is
\begin{enumerate}[label=(\alph*)]
\addtocounter{enumi}{1}
\item if $u \in L$, $R_{1}uxy$ and $y \leq z$ then $R_{1}uxz$.
\end{enumerate}

Somewhat analogously, if $R_{1}xyz$ and $w \leq x$ then $w$ having made no determinations that $x$ has not, knows nothing that stands against $y$'s and $z$'s reports that $x$ does not and hence can use their determinations in its own evaluations of regularities. That is
\begin{enumerate}[label=(\alph*)]
\addtocounter{enumi}{2}
\item $R_{1}xyz$ and $w \leq x$ then $R_{1}wyz$.
\end{enumerate}

We'll take it that, to sloganise, ``good practice breeds good practice'', that if $u$ and $v$ are both well-behaved and $R_{1}uvx$ then $u$ and $v$'s good habits rub off/are imposed on $x$. That is
\begin{enumerate}[label=(\alph*)]
\addtocounter{enumi}{3}
\item if $u, v \in L$ and $R_{1}uvx$ then $x \in L$.
\end{enumerate}
It follows already from what was said above that $x$ must acknowledge all regularities that are logical truths.

These are our basic modelling assumptions. Other constraints may be adopted. For example, we might make explicit that the division of labour that sees $y$ initiating and $z$ completing the test of a regularity under $x$'s oversight is accidental, not a necessary feature of the experiment. If $R_{1}xyz$, a single research group $w$ could play the $y$ role --- $R_{1}xwz$ --- \emph{and} play the $z$ role --- $R_{1}xyw$. That is
\begin{enumerate}[label=(\alph*)]
\addtocounter{enumi}{4}
\item if $R_{1}xyz$ then $(\exists w \in S)[ R_{1}xwz$ and $R_{1}xyw ]$.
\end{enumerate}

We trust that our basic assumptions (a)--(d) are plausible descriptions of how no doubt idealised, well-organised, co\"{o}rdinated research activity goes. It would, of course, be disingenuous of us to claim that we have fashioned them without \emph{some} thought as to the behaviour of one of the ternary relations in Routley's ``American plan'' semantics for relevance logic but we do think that they capture an aspect of (idealised) scientific practice---{}far from the full story, of course, but the basics.

The obvious next step is to say that the overseeing lab $x$ reports the regularity $A \to B$ to be true when, for each pair $\langle y, z \rangle$, $y$ initiating, $z$ following up, in its purview, $z$ reports that $B$ is true when $x$ reports that $A$ is true. As far as it goes that's just dandy. When we turn to think of a lab declaring a putative regularity to be false, we need to keep two considerations in mind. First, gathering positive evidence for a hypothesis can be a very different activity from gathering evidence to disconfirm a hypothesis. (This comes to the fore in our later treatment of probability.) The second is that hypothesised regularities are not to be given up too readily. Scientific experiments are, in principle, reproducible\footnote{``Kant was perhaps the first to realise that the objectivity of scientific statements is closely connected with the construction of scientific theories---{}with the use of hypotheses and universal statements. Only when certain events recur in accordance with rules or regularities, as is the case with repeatable experiments, can our observations be tested---{}in principal---{}by anyone. We do not take our own observations quite seriously, or accept them as scientific observations, until we have repeated and tested them.'' \cite[\S{}8]{Popper1959}} and---ideally, at least---only an in principle \emph{reproducible} negative result gets established as counting against the regularity under investigation.\footnote{``We say that a theory is falsified only if we have accepted basic statements that contradict it. This condition is necessary, but not sufficient; for we have seen that non-reproducible single occurrences are of no significance to science. Thus a few stray basic statements contradicting a theory will hardly induce us to reject it as falsified. We shall take it as falsified only if we discover a \emph{reproducible effect} which refutes the theory.'' \cite[\S{}22, emphasis in the original]{Popper1959} See further \cite[\S{}IV]{Kuhn1961} on how scientists react to anomaly and discrepancy.} (In practice, of course, reproducibility is an ideal which cannot always be met, neither in ``hard'' sciences such as cosmology (by the nature of the case) and particle physics (due to the cost of the equipment) nor in many biological and social sciences (again by the nature of the case but also because attempts at strictly reproducing may well violate ethical standards).\footnote{Note too that `replication can even be hazardous. The German scientist Georg Wilhelm Reichmann was fatally electrocuted during an attempt to reproduce Ben Franklin’s famous experiment with lightning' \cite[p.\ 4972]{Fang2010}.}) So it's not enough for a lab to count the regularity $A \to B$ as false---{}as falsified---{} that one pair in its purview have reported $A$ true and $B$ false. The experimental finding has to be reproducible: as we shall formulate it, the negative result has to remain \emph{available}---{}this is our way of discounting Popper's ``stray'' results. For these reasons we employ a second accessibility relation, $R_{2}xyz$, governing refutations. The \textit{availability constraint} takes the form: if $x \leq w$ and $R_{2}xyz$ then $R_{2}wyz$. That the relations $R_{1}$ and $R_{2}$ are largely independent reflects the fact that the decision to count a putative regularity as falsified may be a methodological decision but it isn't purely formal---{}as we might say, provocatively, it isn't a \emph{logical} decision.


In diverse ways, these considerations guide our formal definitions of frames, models, valuations, logical consequence and logical truth---{}to which we now turn.
\



\section{The Logic of Laboratory Reports}\label{LLR}

At this point we have enough of the pieces in place to proceed with formulating our logic. We begin with the definition of a frame, adapted from the Routley--Meyer semantics for relevant logic.\footnote{Textbook accounts are to be found in \cite[Ch.\ 5]{Read1988} and, very briefly, in \cite[Appendix B]{Mares2004}.}

\begin{definition}\label{defFrame}
A \emph{frame} is a quintuple $\mathscr{F} = \langle S, L, \leq, R_1, R_2 \rangle$, where $S$ is a non-empty set, $L$ a subset of $S$, $\leq$ is an ordering on $S$, and $R_1$ and $R_2$ are ternary relations on $S$. We stipulate that $x \leq y$ iff $(\exists u \in L)R_{1}uxy$. \\
\

$R_1$, $R_2$, $L$ and $\le$ have the properties i) - v) and possibly one or more of the other five. In particular for any $x, y, z, w\in S$:
\begin{enumerate}[label=\roman*)]
\item $x \leq x$
\item if $x \leq y$ and $y \leq z$ then $x \leq z$;
\item $L$ is an upwards $\leq$-closed subset of $(S,\leq)$;
\item if $w \leq x$ and $R_{1}xyz$ then $R_{1}wyz$;
\item if $x \leq w$ and $R_{2}xyz$ then $R_{2}wyz$;
\item $R_{1}xxx$;
\item $R_{2}xxx$;
\item if $R_{1}xyz$ then $(\exists w \in S)( R_{1}xyw$ and $R_{1}xwz )$;
\item if $R_{2}xyz$ then $(\exists w \in S)( R_{1}xyw$ and $R_{2}xwz )$;
\item if $R_{2}xyz$ then $(\exists w \in S)( R_{2}xyw$ and $R_{1}xzw)$.
\end{enumerate}
\end{definition}



We take $\mathcal{L}$ to be the set of formulas of a standard propositional language; \textit{i.e.}, formulas of finite length generated from a fixed stock, $At(\mathcal{L})$, of atomic propositions using the connectives $\neg$, $\land$, $\lor$ and $\to$.

Models are defined more or less as usual but with two atomic persistence/heredity contraints.

\begin{definition}\label{defModel}
A \emph{model} $\mathcal{M}$ is a pair $\langle \mathscr{F}, v \rangle$ where $\mathscr{F}$ is a frame and $v$ is a \emph{valuation}, \textit{i.e.} a function from labs and atomic formulas to subsets of the set $\lbrace T, F \rbrace$ of truth-values. That is, $v: (S \times At(\mathcal{L})) \rightarrow \mathscr{P}(\lbrace T, F \rbrace)$. A valuation $v$ satisfies the pair of constraints

\begin{enumerate}[label=\roman*)]
\item if $x \leq y$ and $T \in v(x, p)$ then $T \in v(y, p)$;
\item if $x \leq y$ and $F \in v(x, p)$ then $F \in v(y, p)$.
\end{enumerate}
\end{definition}

We extend $v$ to a function $v_{\mathcal{M}}$ from labs and formulas to subsets of $\lbrace T, F \rbrace$. We evaluate negation, conjunction and disjunction according to this augmented Hasse diagram, familiar from Belnap's ``useful four-valued logic'' $(\cite{Belnap1977,Belnap1977a,Belnap1992})$.

\begin{center}
\begin{tikzpicture}[-,>=stealth',shorten >=1pt,auto,node distance=2.5cm,
  thick,main node/.style={circle,draw,font=\sffamily\normalsize},main node2/.style={circle,fill=gray!50,draw,font=\sffamily\normalsize}]

  \node[main node][label=above left:{$\emptyset$}] (1) {};
  \node[main node2][label={$\lbrace T \rbrace$}] (2) [above right of=1] {};
  \node[main node][label=below:{$\lbrace F \rbrace$}] (3) [below right of=1] {};
  \node[main node2][label=above right:{$\lbrace T, F \rbrace$}] (4) [above right of=3] {};

  \path[every node/.style={font=\sffamily\small}]
    (1) edge (2)
         edge (3)
    (2) edge (4)
    (3) edge (4);
  \draw[dashed, ->]  (3) edge [bend right] node {$\neg$} (2)
    (1) edge [loop left] node {$\neg$} (1)
    (2) edge [bend right] node {$\neg$} (3)
    (4) edge [loop right] node {$\neg$} (4);
\end{tikzpicture}
\end{center}
\noindent In essence we are retaining what may, following \cite[p.\ 385]{Tappolet2000}, be called ``truisms about truth'': \textit{e.g.}, `the truism that a conjunction is true if and only if its conjuncts are true'. Equally we have the truisms that a disjunction is true if and only at least one disjunct is true and that a disjunction is false if and only if both disjuncts are false. Consider the disjunction $A \lor B$ when $A$ is assigned $\emptyset$ and $B$ is assigned $\lbrace T, F \rbrace$: as $B$ is true, $A \lor B$ is true; as $A$ is not false, $A \lor B$ is not at the same time false, and so it is assigned $\lbrace T \rbrace$ as the Hasse diagram indicates.

\begin{definition}\label{defSatisfaction} For every model $\mathcal{M} =\langle \mathscr{F}, v \rangle$ we define the function $v_{\mathcal{M}} : (S \times \mathcal{L}) \rightarrow \mathscr{P}(\lbrace T, F \rbrace)$ as follows (we omit the superscript when it is clear from the context):

\begin{enumerate}[label=\roman*)]
\item \label{atomic} for atomic $p$, $v_{\mathcal{M}}(x, p) = v(x,p)$;

\item\label{not} $T \in v_{\mathcal{M}}(x, \neg{}A) $ iff $F \in v_{\mathcal{M}}(x, A) $, $F \in v_{\mathcal{M}}(x, \neg{}A) $ iff $T \in v_{\mathcal{M}}(x, A) $;

\item\label{and} $T \in v_{\mathcal{M}}(x, A \land B) $ iff $T \in v_{\mathcal{M}}(x, A)$ and $T \in v_{\mathcal{M}}(x, B)$,\\
$F \in v_{\mathcal{M}}(x, A \land B)$ iff $F \in v_{\mathcal{M}}(x, A)$ or $F \in v_{\mathcal{M}}(x, B)$;

\item\label{or} $T \in v_{\mathcal{M}}(x, A \lor B)$ iff $T \in v_{\mathcal{M}}(x, A)$ or $T \in v_{\mathcal{M}}(x, B)$,\\
$F \in v_{\mathcal{M}}(x, A \lor B)$ iff $F \in v_{\mathcal{M}}(x, A)$ and $F \in v_{\mathcal{M}}(x, B)$.

\end{enumerate}

\noindent We modify the standard Routley--Meyer evaluation conditions for the conditional (and adapt to the four-valued setting):

\begin{enumerate}[label=\roman*)]
\setcounter{enumi}{4}
\item\label{conditional} $T \in v_{\mathcal{M}}(x, A \rightarrow B) \mathrm{~iff~} (\forall y, z \in S)[\mathrm{~if~} R_{1}xyz \mathrm{~and~} T \in v_{\mathcal{M}}(y, A) \mathrm{~then~} T \in v_{\mathcal{M}}(z, B)]$;
\item\label{negconditional}$F \in v_{\mathcal{M}}(x, A \rightarrow B) \mathrm{~iff~}  (\exists y, z \in S) [ R_{2}xyz \mathrm{~and~} T \in v_{\mathcal{M}}(y, A) \mathrm{~and~} F \in v_{\mathcal{M}}(z, B)]$.\footnote{Technically, this is our major departure from Routley's completion of the American plan. In our notation, Routley adopts this evaluation clause:
\begin{center}
$F \in v_{\mathcal{M}}(x, A \rightarrow B) \mathrm{~iff~}  (\exists y, z \in S) [ R_{2}xyz \mathrm{~and~} F \in v_{\mathcal{M}}(y, B) \mathrm{~and~} F \notin v_{\mathcal{M}}(z, A)]$.
\end{center}
Routley has in place some additional constraints that have no parallel here.} 
\end{enumerate}
\end{definition}


In \S\ref{motive} we said that $\le$ is an information ordering. We justify that remark with the Persistence Lemma (or Hereditary Condition).

\begin{lemma}[Persistence Lemma]\label{Persist}
In any model $\mathcal{M}$ and for any formula $A$,
\begin{quote}if $x \leq y$ and $T \in v_{\mathcal{M}}(x, A)$ then $T \in v_{\mathcal{M}}(y, A)$;\\
if $x \leq y$ and $F \in v_{\mathcal{M}}(x, A)$ then $F \in v_{\mathcal{M}}(y, A)$.
\end{quote}
\end{lemma}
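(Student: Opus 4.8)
The plan is to prove both clauses simultaneously by induction on the complexity of $A$: the two statements must be coupled because, as we will see, the $\neg$-case of each clause appeals to the induction hypothesis for the other. For the base case, $A$ an atomic $p$, the two clauses are literally the valuation constraints i) and ii) of Definition~\ref{defModel}, since $v_{\mathcal{M}}(x, p) = v(x, p)$ by clause \ref{atomic} of Definition~\ref{defSatisfaction}.

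For the inductive step on $\neg$, $\land$ and $\lor$ the argument is routine, using clauses \ref{not}--\ref{or} of Definition~\ref{defSatisfaction}. For $A = \neg B$: if $x \le y$ and $T \in v_{\mathcal{M}}(x, \neg B)$ then $F \in v_{\mathcal{M}}(x, B)$, so by the $F$-half of the induction hypothesis $F \in v_{\mathcal{M}}(y, B)$, hence $T \in v_{\mathcal{M}}(y, \neg B)$; the $F$-half is symmetric and appeals to the $T$-half of the hypothesis. For $A = B \land C$ the $T$-clause propagates a pair of hypotheses and the $F$-clause propagates a disjunctive hypothesis ($F \in v_{\mathcal{M}}(x, B)$ or $F \in v_{\mathcal{M}}(x, C)$), which causes no difficulty; $A = B \lor C$ is dual.

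The only case that genuinely uses the frame conditions is $A = B \to C$. For $T$-persistence, suppose $x \le y$ and $T \in v_{\mathcal{M}}(x, B \to C)$; to show $T \in v_{\mathcal{M}}(y, B \to C)$, take any $z, w$ with $R_{1}yzw$ and $T \in v_{\mathcal{M}}(z, B)$. Since $x \le y$, condition iv) of Definition~\ref{defFrame} gives $R_{1}xzw$, so clause \ref{conditional} applied at $x$ yields $T \in v_{\mathcal{M}}(w, C)$, as required. For $F$-persistence, suppose $x \le y$ and $F \in v_{\mathcal{M}}(x, B \to C)$; by clause \ref{negconditional} there are $z, w$ with $R_{2}xzw$, $T \in v_{\mathcal{M}}(z, B)$ and $F \in v_{\mathcal{M}}(w, C)$. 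Since $x \le y$, the availability constraint (condition v) of Definition~\ref{defFrame}) gives $R_{2}yzw$, so the very same $z, w$ witness $F \in v_{\mathcal{M}}(y, B \to C)$. Note the asymmetry: the universally quantified $T$-clause forces us to pull $R_1$-facts \emph{back} along $\le$ (condition iv)), while the existentially quantified $F$-clause lets us push $R_2$-facts \emph{forward} along $\le$ (condition v)); no induction hypothesis is invoked in either direction of the conditional case.

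I expect the only real care to be needed in this last case, in matching the variables of conditions iv) and v) to the quantifier structure of clauses \ref{conditional} and \ref{negconditional}, and in observing that it is precisely the opposite monotonicity directions of $R_1$ and $R_2$ in their first argument that make both halves succeed. Everything else is a mechanical induction driven by the truth/falsity clauses, and the negation step is the only place the simultaneity of the two statements is essential.
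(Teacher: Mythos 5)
Your proposal is correct and follows essentially the same route as the paper's own proof: a simultaneous induction on the complexity of $A$, with the base case given by the valuation constraints of Definition~\ref{defModel}, routine coupled steps for $\neg$, $\land$, $\lor$, and the conditional case handled exactly as the paper does, pulling $R_1$-triples back along $\le$ via condition iv) for $T$-persistence and pushing $R_2$-triples forward via condition v) for $F$-persistence, with no appeal to the induction hypothesis in that case. Your added remarks on the opposite monotonicity of $R_1$ and $R_2$ and on why the two clauses must be proved together are accurate glosses on the same argument.
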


\begin{proof}
\textit{Base case} This holds for atomic formulas by the definition of \emph{model}.\\
\textit{Induction hypothesis} Suppose that the lemma holds for all formulas of length $\leq k$.\\
\textit{Inductive step} Let $A$ be of length $k + 1$. There are four cases to consider in each of which $B$ and, where present, $C$ is of length $\leq k$. (We omit the subscript of the valuation function $v_{\mathcal{M}}$).
\begin{enumerate}[label=(\roman*)]
\item $A = \neg{}B$: $T \in v(x, \neg{}B)$ only if $F \in v(x, B)$ only if, by the induction hypothesis, $F \in v(y, B)$ only if $T \in v(y, \neg{}B)$;\\
$F \in v(x, \neg{}B)$ only if $T \in v(x, B)$ only if, by the induction hypothesis, $T \in v(y, B)$ only if $F \in v(y, \neg{}B)$;

\item $A = B \land C$: $T \in v(x, B \land C)$ only if $T \in v(x, B)$ and $T \in v(x, C)$ only if, by the induction hypothesis, $T \in v(y, B)$ and $T \in v(y, C)$ only if $T \in v(y, B \land C)$;\\
$F \in v(x, B \land C)$ only if $F \in v(x, B)$ or $F \in v(x, C)$ only if, by the induction hypothesis, $F \in v(y, B)$ or $F \in v(y, C)$ only if $v \in (y, B \land C)$;

\item $A = B \lor C$: $T \in v(x, B \lor C)$ only if $T \in v(x, B)$ or $T \in v(x, C)$ only if, by the induction hypothesis, $T \in v(y, B)$ or $T \in v(y,  C)$ only if $T \in v(y, B \lor C)$;\\
$F \in v(x, B \lor C)$ only if $F \in v(x, B)$ and $F \in v(x, C)$ only if, by the induction hypothesis, $F \in v(y, B)$ and $F \in v(y, C)$ only if $F \in v(y, B \lor C)$;

\item $A = B \to C$: $T \in v(x, B \to C)$ only if $\forall z, w$ [if $R_{1}xzw$ and $T \in v(z, B)$ then $T \in v(w, C)]$; if $R_{1}yzw$ then, by clause \textit{iv}) in Definition \ref{defFrame}, $R_{1}xzw$; consequently, if $T \in v(z, B)$ then $T \in v(w, C)]$; and so, $z$ and $w$ being arbitrary, we have that $\forall z, w$ [if $R_{1}yzw$ and $T \in v(z, B)$ then $T \in v(w, C)]$, \textit{i.e.}, $T \in v(y, B \to C)$;\\
$F \in v(x, B \to C)$ only if $\exists z, w [ R_{2}xzw$ and $T \in v(z, B)$ and $F \in v(w, C)]$; by clause \textit{v}) of Definition \ref{defFrame}, $\exists z, w [ R_{2}yzw$ and $T \in v(z, B)$ and $F \in v(w, C)]$; that is, $F \in v(y, B \to C)$.
\end{enumerate}

\end{proof}

Our definitions of logical consequence and logical truth are common in the literature on relevant logic. Here we make good on the thought that the well-behaved labs, not necessarily all labs, acknowledge the truth of laws of logic.

\begin{definition}[Logical Consequence \textit{and} Logical Truth]\label{defConsequence}
Given a model $\mathcal{M}$ determined by the frame $\mathscr{F} = \langle S, L, \leq, R_1, R_2 \rangle$ and valuation $v$ and a \emph{non-empty} set of formulas $X$, we write $X \vDash_{\mathcal{M}} A$ iff, at every $x \in S$,  $T \in v_{\mathcal{M}}(x, A)$ when, for all $B \in X$, $T \in v_{\mathcal{M}}(x, B)$. We write $\vDash_{\mathcal{M}} A$ iff, at every $x \in L$,  $T \in v_{\mathcal{M}}(x, A)$.

We then define \emph{logical consequence} and \emph{logical truth} as follows:
\begin{center}
$X \vDash A$ iff, for all models $\mathcal{M}$, $X \vDash_{\mathcal{M}} A$;\\
$\vDash A$ iff, for all models $\mathcal{M}$, $\vDash_{\mathcal{M}} A$.
\end{center}
\end{definition}

\noindent As this definition makes clear, in a model $\mathcal{M}$ the nodes in the distinguished subset $L$ of $S$ are states in which all logical truths hold; outside $L$ some logical truths may fail to be true. (Letting $t$ be the collection of all logical truths, we have, obviously and uninformatively, that $\vDash A$ iff $t \vDash A$.\footnote{Alternatively, following and adapting \cite{Read1988}, we could introduce a sentential constant $t$ with the constraint on valuations that $v(x,t) = \lbrace T \rbrace$ if $x \in L$, $v(x,t) = \emptyset$ if $x \notin L$. By appeal to the $\leq$-closure of $L$, we would find that $t$ satisfies the first clause of the Persistence Lemma; it satisfies the second clause trivially. We would then have Read's Proposition 5.3 (p.\ 85), \textit{i.e.}, $t \vDash A$ iff $~\vDash A$, and could say with Read, ``$t$ is the logic.''})


As is standard, the definitions of logical consequence and logical truth permit proof of a weak deduction theorem.

\begin{thm}[(Weak) Deduction Theorem]\label{WDT} For all formulas $A$ and $B$ in $\mathcal{L}$,
\begin{center}$A \vDash B$ iff $\vDash A \to B$.\label{DedEq}\end{center}
\end{thm}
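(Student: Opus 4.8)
The plan is to prove the two implications separately, each time just unfolding the relevant definitions; the proof is short, the only thing to spot is which structural facts about the frame do the work.

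For the left-to-right direction, assume $A \vDash B$ and let $\mathcal{M}$ be an arbitrary model and $x \in L$. To show $T \in v_{\mathcal{M}}(x, A \to B)$ I would fix $y, z \in S$ with $R_1 xyz$ and $T \in v_{\mathcal{M}}(y, A)$ and argue $T \in v_{\mathcal{M}}(z, B)$. Since $x \in L$ and $R_1 xyz$, the stipulation in Definition~\ref{defFrame} (that $y \le z$ iff $(\exists u \in L)R_1 uyz$) gives $y \le z$. Then the Persistence Lemma (Lemma~\ref{Persist}) carries $T \in v_{\mathcal{M}}(y, A)$ up to $T \in v_{\mathcal{M}}(z, A)$, and now $A \vDash B$, applied at the node $z$, yields $T \in v_{\mathcal{M}}(z, B)$, as required. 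Since $\mathcal{M}$ and $x \in L$ were arbitrary, $\vDash A \to B$.

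For the right-to-left direction, assume $\vDash A \to B$ and let $\mathcal{M}$ be an arbitrary model and $x \in S$ with $T \in v_{\mathcal{M}}(x, A)$; I must show $T \in v_{\mathcal{M}}(x, B)$. By reflexivity of $\le$ (clause i) of Definition~\ref{defFrame}) we have $x \le x$, so by the defining stipulation for $\le$ there is some $u \in L$ with $R_1 uxx$ --- this is precisely the ``equipment check'' fact (a) from \S\ref{motive}. Since $u \in L$ and $\vDash A \to B$, we have $T \in v_{\mathcal{M}}(u, A \to B)$, i.e.\ by clause \ref{conditional} of Definition~\ref{defSatisfaction}, for all $y, z$ with $R_1 uyz$ and $T \in v_{\mathcal{M}}(y, A)$ we get $T \in v_{\mathcal{M}}(z, B)$. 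Instantiating $y = z = x$ and using $R_1 uxx$ together with the hypothesis $T \in v_{\mathcal{M}}(x, A)$ gives $T \in v_{\mathcal{M}}(x, B)$. Hence $A \vDash B$.

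I do not expect a genuine obstacle here: the argument is entirely a matter of chasing the truth-condition for $\to$ through the definitions. The one point worth flagging is that the two directions lean on complementary features of the semantics --- the forward direction needs the Persistence Lemma and the fact that $R_1$ with an $L$-node in first position is a witness for $\le$, while the backward direction needs reflexivity of $\le$ to produce a well-behaved lab $u$ with $R_1 uxx$, so that $L$-truth of $A\to B$ can be cashed out as truth-preservation at the otherwise arbitrary node $x$. If anything is delicate it is simply making sure that clauses i)--iii) and vi) (via the stipulation) are invoked correctly rather than the stronger optional clauses vii)--x), which are not needed.
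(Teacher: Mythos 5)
Your proof is correct and follows essentially the same route as the paper's: both directions turn on exactly the facts you identify, namely the equipment-check instance $(\exists u \in L)R_{1}uxx$ (from reflexivity of $\le$ plus the stipulation) for the right-to-left direction, and the Persistence Lemma together with the fact that $R_{1}uxy$ with $u \in L$ yields $x \le y$ for the left-to-right direction. The only cosmetic differences are that the paper argues the persistence direction contrapositively and states the result model-by-model ($A \vDash_{\mathcal{M}} B$ iff $\vDash_{\mathcal{M}} A \to B$), a slightly stronger form it reuses in the soundness proofs of the rules.
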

\begin{proof}
It is enough to show that for an arbitrary model $\mathcal{M}$,
\begin{center}$A \vDash_\mathcal{M} B$ iff $\vDash_\mathcal{M} A \to B$.\end{center} 
Suppose first that $\vDash_\mathcal{M} A \to B$. Let $x \in S$ and $T \in v_\mathcal{M}(x, A)$.  Since, for all $x \in S$, $(\exists u \in L)R_{1}uxx$, we have that $T \in v_\mathcal{M}(u, A \to B)$, since $\vDash_\mathcal{M} A \to B$, and therefore that $T \in v_\mathcal{M}(x, B)$. Thus $A \vDash_\mathcal{M} B$.

Now suppose that $\nvDash_\mathcal{M} A \to B$ so that, for some $u \in L$, there exist $x$ and $y$ in $S$ such that $R_{1}uxy$, $T \in v_\mathcal{M}(x, A)$ and $T \notin v_{\mathcal{M}}(y, B)$. Now, since $u \in L$, $x \leq y$ and so, by the Persistence Lemma, $T \in v_\mathcal{M}(y, A)$. Hence $A \nvDash_\mathcal{M} B$.
\end{proof}

The definitions of Section \ref{LLR} which build in the constraints on the relations $R_1$ and $R_2$ motivated in section \ref{motive} yield what we are calling the Logic of Scientific Discovery (LScD). 


In Definition \ref{defConsequence} we have, in effect, taken $\lbrace T \rbrace$ and $\lbrace T, F \rbrace$ to be the designated values in the Hasse diagram (as foreshadowed in the shading above). The $\to$-free fragment of our logic is the familiar Dunn--Belnap four-valued logic, arguably what remains of the classical logic of negation, conjunction and disjunction when one gives up on the principles that truth and falsity are exhaustive (bivalence) and mutually exclusive (non-contradiction).\footnote{It's worth noting that, as a moment's reflection on the symmetry properties of the Hasse diagram reveals, nothing would change with regard to which inference patterns are sound if, instead of $\lbrace T \rbrace$ and $\lbrace T, F \rbrace$, we were to take $\lbrace T \rbrace$ and $\emptyset$ to be the designated values---{}avoidance of falsity rather than pursuit of truth then being the goal. Classical logic does not distinguish between these goals, of course, and, in a sense that may appeal to inferentialists, neither does what remains of the classical logic of negation, conjunction and disjunction when one gives up on the principles that truth and falsity are exhaustive and mutually exclusive.

This observation provides the basis for an easy proof that in Dunn--Belnap logic, the $\to$-free fragment of our logic, $B \vDash \neg{}A$ when $A \vDash \neg{}B$.}

While some may hold with Wittgenstein (\cite[\S{}290]{Wittgenstein1980}) that literally nothing follows from a contradiction and so there may be exceptions to $A \vDash A$, relevantists do not demur. As follows from Definition \ref{defConsequence} and Theorem \ref{DedEq}, $A \to A$ is a theorem for \emph{every} formula $A$ of $\mathcal{L}$. On the other hand, the logic of first-degree entailment gives us no $\to$-free theorems. In this regard, our logic really is a logic of regularities.


\subsection{Soundness}

\textbf{LScD} logics have axiom schemata A1 - A11 and possibly other of the following, depending on which constraints other than \textit{i}) -- \textit{v}) are adopted from Definition \ref{defFrame}:
\begin{enumerate}[label=A\arabic*]
\item $A \to A$;
\item $A \to (A \lor B)$, $B \to (A \lor B)$;
\item $(A \land B) \to A$, $(A \land B) \to B$;
\item $((A \to C) \land (B \to C)) \to ((A \lor B) \to C))$;
\item $((A \to B) \land (A \to C)) \to (A \to (B \land C))$;
\item $(A \land (B \lor C)) \to ((A \land B) \lor C)$;
\item $\neg(A \lor B) \to (\neg{}A \land \neg{}B)$, $(\neg{}A \land \neg{}B) \to \neg(A \lor B)$;
\item $\neg(A \land B) \to (\neg{}A \lor \neg{}B)$, $(\neg{}A \lor \neg{}B) \to \neg(A \land B)$;
\item $A \to \neg\neg{}A$, $\neg\neg{}A \to A$;
\item $\neg((A \lor B) \to C) \to (\neg(A \to C) \lor \neg(B \to C))$;
\item $\neg(A \to (B \land C)) \to (\neg(A \to B) \lor \neg(A \to C))$;
\item $(A \land (A \to B)) \to B$;
\item $(A \land \neg{}B) \to \neg(A \to B)$;
\item $((A \to B) \land (B \to C)) \to (A \to C)$;
\item $((A \rightarrow B) \land \neg(A \to C)) \rightarrow \neg(B \to C)$;
\item $((\neg{}A \rightarrow \neg{}B) \land \neg(C \to A)) \rightarrow \neg(C \to B)$
\end{enumerate}
\noindent and these rules of proof:
\begin{enumerate}[label=R\arabic*]
\item $A, A \to B / B$ (modus ponens);
\item $A, B / A \land B$ (adjunction);
\item $A \to B  / (C \to A) \to (C \to B)$ (prefixing);
\item $A \to B  / (B \to C) \to (A \to C)$ (suffixing);
\item $A \rightarrow B / \neg(A \to C) \rightarrow \neg(B \to C)$ (negated suffixing);
\item $\neg{}A \rightarrow \neg{}B / \neg(C \to A) \rightarrow \neg(C \to B)$ (negated prefixing).
\end{enumerate}

Of the axioms, only A4, A5 and A10 -- A16 require any work, the rest following \textit{via} the Deduction Theorem from Dunn--Belnap logic.\\
\

\begin{proof}[Soundness of A4] Suppose that $T \in v(x, (A \to C) \land (B \to C))$. Then $T \in v(x, A \to C)$ and $T \in v(x, B \to C)$; thus for all $ y, z \in S$,if $R_{1}xyz$ and $T \in v(y, A)$ then $T \in v(z, C)$ and if $R_{1}xyz$ and $T \in v(y, B)$ then $T \in v(z, C)$. But then, if $R_{1}xyz$ and $T \in v(y, A)$ or $T \in v(y, B)$ then $T \in v(z, C)$, hence if $R_{1}xyz$ and $T \in v(y, A \vee B)$ then $T \in v(z, C)$, \textit{i.e.}, $T \in v(x, (A \lor B) \to C)$. Thus, for arbitrary $\mathcal{M}$, $(A \to C) \land (B \to C) \vDash_{\mathcal{M}} (A \lor B) \to C$ and hence $(A \to C) \land (B \to C) \vDash (A \lor B) \to C$. By the Deduction Theorem, $\vDash ((A \to C) \land (B \to C)) \to ((A \lor B) \to C)$.
\end{proof}

\begin{proof}[Soundness of A5] Suppose that $T \in v(x, (A \to B) \land (A \to C))$. Then $T \in v(x, A \to B)$ and $T \in v(x, A \to C)$; thus $(\forall y, z \in S)[$ if $R_{1}xyz$ and $T \in v(y, A)$ then $T \in v(z, B)]$ and $(\forall y, z \in S)[$ if $R_{1}xyz$ and $T \in v(y, A)$ then $T \in v(z, C)]$. But then, $(\forall y, z \in S)[$ if $R_{1}xyz$ and $T \in v(y, A)$ then $T \in v(y, B)$ and $T \in v(z, C)]$, hence $(\forall y, z \in S)[$ if $R_{1}xyz$ and $T \in v(y, A)$ then $T \in v(z, B \land C)]$, \textit{i.e.}, $T \in v(x, A \to (B \land C))$. Thus for arbitrary $\mathcal{M}$, $(A \to B) \land (A \to C) \vDash_{\mathcal{M}} A \to (B \land C)$ and hence $(A \to B) \land (A \to C) \vDash A \to (B \land C)$. By the Deduction Theorem, $\vDash ((A \to B) \land (A \to C)) \to (A \to (B \land C))$.
\end{proof}

\begin{proof}[Soundness of A10] Suppose that $T \in v(x, \neg((A \lor B) \to C))$. Then $(\exists y, z \in S)[ R_{2}xyz$ and $T \in v(y, A \lor B)$ and $F \in v(z, C)]$. But then, $(T \in v(y, A)$ or $T \in v(y, B))$ and $F \in v(z, C)$, hence $T \in v(y, A)$ and $F \in v(z, C)$ or $T \in v(y, B)$ and $F \in v(z, C)$, \textit{i.e.}, $T \in v(x, \neg(A \to C))$ or $T \in v(x, \neg(B \to C))$, so $T \in v(x, \neg(A \to C) \lor \neg(B \to C))$. Thus for arbitrary $\mathcal{M}$, $\neg((A \lor B) \to C) \vDash_{\mathcal{M}} \neg(A \to C) \lor \neg(B \to C)$ and hence $\neg((A \lor B) \to C) \vDash \neg(A \to C) \lor \neg(B \to C)$. By the Deduction Theorem, $\vDash \neg((A \lor B) \to C) \to (\neg(A \to C) \lor \neg(B \to C))$.
\end{proof}

\begin{proof}[Soundness of A11] Suppose that $T \in v(x, \neg(A \to (B \land C)))$. Then $(\exists y, z \in S)[ R_{2}xyz$ and $T \in v(y, A)$ and $F \in v(z, B \land C)]$. But then, $T \in v(y, A)$ and $F \in v(z, B)$ or $F \in v(z, C)$, hence $T \in v(y, A)$ and $F \in v(z, B)]$ or $T \in v(y, A)$ and $F \in v(z, C)]$, \textit{i.e.}, $T \in v(x, \neg(A \to B))$ or $T \in v(x, \neg(A \to C))$, so $T \in v(x, \neg(A \to B) \lor \neg(A \to C))$. Thus for arbitrary $\mathcal{M}$, $\neg(A \to (B \land C)) \vDash_{\mathcal{M}} \neg(A \to B) \lor \neg(A \to C)$ and hence $\neg(A \to (B \land C)) \vDash \neg(A \to B) \lor \neg(A \to C)$. By the Deduction Theorem, $\vDash \neg(A \to (B \lor C)) \to (\neg(A \to B) \lor \neg(A \to C))$.
\end{proof}

\begin{proof}[Soundness of A12] Here we appeal to constraint vi) on $R_1$. Suppose that $T \in v(x, A)$ and $T \in v(x, A \to B)$. Since $R_{1}xxx$, $v(x, B)$. Thus for arbitrary $\mathcal{M}$, $A \land (A \to B) \vDash_\mathcal{M} B$. Hence $A \land (A \to B) \vDash B$. By the Deduction Theorem, $\vDash (A \land (A \to B)) \to B$. 
\end{proof}

\begin{proof}[Soundness of A13] Here we appeal to constraint vii) on $R_2$. Suppose that $T \in v(x, A \land \neg{}B)$. Then $T \in v(x, A)$ and $F \in v(x, B)$. As $R_{2}xxx$ we have that $(\exists y, z)[R_{2}xyz$ and $T \in v_\mathcal{M}(y, A)$ and $F \in v(z, B)]$. Hence $F \in v(x, A \to B)$ and so $T \in v(x, \neg(A \to B))$. Thus for arbitrary $\mathcal{M}$, $A \land \neg{}B \vDash_\mathcal{M}  \neg(A \to B)$.  Hence $A \land \neg{}B \vDash  \neg(A \to B)$ and so, by the Deduction Theorem, $\vDash (A \land \neg{}B) \to \neg(A \to B)$.
\end{proof}

\begin{proof}[Soundness of A14] Here we appeal to constraint viii) on $R_{1}$.  Suppose that $T \in v(x, A \to B)$ and $T \in v(x, B \to C)$. Let $Rxyz$. By viii), there's a node $w$ such that $R_{1}xyw$ and $R_{1}xwz$. So now, if $T \in v(y,A)$ then $T \in v(w,B)$ and thus $T \in v(z,C)$. So $(\forall y, z)[$ if $R_{1}xyz$ and $T \in v_\mathcal{M}(y, A)$ then $T \in v(z, C)]$, \textit{i.e.}, $T \in v(x, A \to C)$. Hence for arbitrary $\mathcal{M}$, $(A \to B) \land (B \to C) \vDash_\mathcal{M} A \to C$ and so $(A \to B) \land (B \to C) \vDash A \to C$. By the Deduction Theorem, $\vDash ((A \to B) \land (B \to C)) \to (A \to C)$.
\end{proof}

\begin{proof}[Soundness of A15] Here we appeal to constraint ix) on $R_1$ and $R_{2}$.  Suppose that $T \in v(x, A \to B)$ and $T \in v(x, \neg(A \to C))$. Let $y$ and $z$ be such that $R_{2}xyz$, $T \in v(y, A)$ and $F \in v(z, C)$. By ix), there's a node $w$ such that $R_{1}xyw$ and $R_{2}xwz$. As $T \in v(y, A)$, $T \in v(w, B)$. So $w$ and $z$ are such that $R_{2}xwz$, $T \in v(w, B)$ and $F \in v(z, C)$. Thus $T \in v(x, \neg(B \to C))$. Hence for arbitrary $\mathcal{M}$, $(A \to B) \land \neg(A \to C) \vDash_{\mathcal{M}} \neg(B \to C)$ and so $\mathcal{M}$, $(A \to B) \land \neg(A \to C) \vDash \neg(B \to C)$. By the Deduction Theorem, $\vDash ((A \rightarrow B) \land \neg(A \to C)) \rightarrow \neg(B \to C)$.
\end{proof}

\begin{proof}[Soundness of A16] Here we appeal to constraint x) on $R_1$ and $R_{2}$.  Suppose that $T \in v(x, \neg{}A \to \neg{}B)$ and $T \in v(x, \neg(C \to A))$. Let $y$ and $z$ be such that $R_{2}xyz$, $T \in v(y, C)$ and $F \in v(z, A)$, \textit{i.e.}, $T \in v(z, \neg{}A)$. By x), there's a node $w$ such that $R_{2}xyw$ and $R_{1}xzw$. As $T \in v(z, \neg{}A)$, $T \in v(w, \neg{}B)$, \textit{i.e.}, $F \in v(w,B)$. So $y$ and $w$ are such that $R_{2}xyw$, $T \in v(y, C)$ and $F \in v(w, B)$. Thus $T \in v(x, \neg(C \to B))$. Hence for arbitrary $\mathcal{M}$, $(\neg{}A \to \neg{}B) \land \neg(C \to A) \vDash_{\mathcal{M}} \neg(C \to B)$ and so $(\neg{}A \to \neg{}B) \land \neg(C \to A) \vDash \neg(C \to B)$. By the Deduction Theorem, $\vDash ((\neg{}A \to \neg{}B) \land \neg(C \to A)) \to \neg(C \to B)$.
\end{proof}

For the rules we proceed as follows making heavy use of the Persistence Lemma (Lemma \ref{Persist}) and the Deduction Theorem (Theorem \ref{WDT}).

\begin{proof}[Soundness of R1] Suppose that, in some model ${\mathcal{M}}$, $\vDash_{\mathcal{M}} A$ and $\vDash_{\mathcal{M}} A \to B$. By (the proof of) the Deduction Theorem, $A \vDash_{\mathcal{M}} B$, \textit{i.e.}, for all $\forall x \in S [$ if $T \in v_{\mathcal{M}}(x, A)$ then $T \in v_{\mathcal{M}}(x, B)]$. Let $u \in L$. Then $T \in v_{\mathcal{M}}(u, A)$, hence $T \in v_{\mathcal{M}}(u, B)$. As this holds for all $u \in L$, $\vDash_{\mathcal{M}} B$. Thus $\vDash B$ when $\vDash A$ and $\vDash A \to B$.
\end{proof}

\begin{proof}[Soundness of R2] Suppose that $\vDash_{\mathcal{M}} A$ and $\vDash_{\mathcal{M}} B$. Let $u \in L$. Then $T \in v_{\mathcal{M}}(u, A)$ and $T \in v_{\mathcal{M}}(u, B)$. Hence $T \in v_{\mathcal{M}}(u, A \land B)$. As $u$ is an arbitrary member of $L$, $\vDash_{\mathcal{M}} A \land B$. Thus $\vDash A \land B$ when $\vDash A$ and $\vDash B$.
\end{proof}

\begin{proof}[Soundness of R3] Suppose that $\vDash_{\mathcal{M}} A \to B$. By (the proof of) the Deduction Theorem, $A \vDash_{\mathcal{M}} B$. Now, let $u \in L$ and $R_{1}uxy$, so that $x \leq y$. If $T \in v_{\mathcal{M}}(x, C \to A)$, by the Persistence Lemma,  $T \in v_{\mathcal{M}}(y, C \to A)$ so that for all $z, w \in S$, if $R_{1}yzw$ and $T \in v_{\mathcal{M}}(z, C)$ then $T \in v_{\mathcal{M}}(w, A)$. As  $A \vDash_{\mathcal{M}} B$, $T \in v_{\mathcal{M}}(w, B)$. And thus $T \in v_{\mathcal{M}}(y, C \to B)$. Putting the pieces together, for all $u$ in $L$, $T \in v_{\mathcal{M}}(u, (C \to A) \to (C \to B))$, \textit{i.e.}, $\vDash_{\mathcal{M}}  (C \to A) \to (C \to B)$. Thus $\vDash  (C \to A) \to (C \to B)$ when $\vDash A \to B$.
\end{proof}

\begin{proof}[Soundness of R4] Suppose that $ \vDash_{\mathcal{M}} A \to B$. By (the proof of) the Deduction Theorem, $A \vDash_{\mathcal{M}} B$. Now, let $u \in L$ and $R_{1}uxy$, so that $x \leq y$. If $T \in v_{\mathcal{M}}(x, B \to C)$, by the Persistence Lemma,  $T \in v_{\mathcal{M}}(y, B \to C)$. Now, for all $z, w \in S$, if $R_{1}yzw$ and $T \in v_{\mathcal{M}}(z, A)$ then, since $A \vDash_{\mathcal{M}} B$, $T \in v_{\mathcal{M}}(z, B)$; since $v_{\mathcal{M}}(y, B \to C)$, $T \in v_{\mathcal{M}}(w, C)$. And thus $T \in v_{\mathcal{M}}(y, A \to C)$. Putting the pieces together, for all $u$ in $L$, $T \in v_{\mathcal{M}}(u, (B \to C) \to (A \to C))$, \textit{i.e.}, $\vDash_{\mathcal{M}} (B \to C) \to (A \to C)$. Thus $\vDash (B \to C) \to (A \to C)$ when $ \vDash A \to B$.
\end{proof}

\begin{proof}[Soundness of R5] Suppose that $\vDash_{\mathcal{M}} A \to B$. By (the proof of) the Deduction Theorem, $A \vDash_{\mathcal{M}} B$. Now, let $u \in L$ and $R_{1}uxy$, so that $x \leq y$. If $T \in v_{\mathcal{M}}(x, \neg(A \to C))$ then, by the Persistence Lemma,  $T \in v_{\mathcal{M}}(y, \neg(A \to C))$ so that there exists $z$ and $w$ such that $R_{2}yzw$, $T \in v_{\mathcal{M}}(z, A)$ and $F \in v_{\mathcal{M}}(w, C)$. Now, as  $A \vDash_{\mathcal{M}} B$, there exists $z$ and $w$ such that $R_{2}yzw$, $T \in v_{\mathcal{M}}(z, B)$ and $F \in v_{\mathcal{M}}(w, C)$. Hence $T \in v_{\mathcal{M}}(y, \neg(B \to C))$. Putting the pieces together, for all $u$ in $L$, $T \in v_{\mathcal{M}}(u, \neg(A \to C) \to \neg(B \to C))$, \textit{i.e.}, $\vDash_{\mathcal{M}} \neg(A \to C) \to \neg(B \to C)$. Thus $\vDash \neg(A \to C) \to \neg(B \to C)$ when $\vDash A \to B$.
\end{proof}

\begin{proof}[Soundness of R6] Suppose that $\vDash_{\mathcal{M}} \neg{}A \to \neg{}B$. By (the proof of) the Deduction Theorem, $\neg{}A \vDash_{\mathcal{M}} \neg{}B$. Now, let $u \in L$ and $R_{1}uxy$, so that $x \leq y$. If $T \in v_{\mathcal{M}}(x, \neg(C \to A))$ then, by the Persistence Lemma,  $T \in v_{\mathcal{M}}(y, \neg(C \to A))$ so that there exists $z$ and $w$ such that $R_{2}yzw$, $T \in v_{\mathcal{M}}(z, C)$ and $F \in v_{\mathcal{M}}(w, A)$, \textit{i.e.}, $T \in v_{\mathcal{M}}(w, \neg{}A)$. Now, as  $\neg{}A \vDash_{\mathcal{M}} \neg{}B$, there exists $z$ and $w$ such that $R_{2}yzw$, $T \in v_{\mathcal{M}}(z, C)$ and $T \in v_{\mathcal{M}}(w, \neg{}B)$, \textit{i.e.}, $F \in v_{\mathcal{M}}(w, B)$. Hence $T \in v_{\mathcal{M}}(y, \neg(C \to B))$. Putting the pieces together, for all $u$ in $L$, $T \in v_{\mathcal{M}}(u, \neg(C \to A) \rightarrow \neg(C \to B))$, \textit{i.e.}, $\vDash_{\mathcal{M}} \neg(C \to A) \rightarrow \neg(C \to B)$. Thus $\vDash \neg(C \to A) \rightarrow \neg(C \to B)$ when $\vDash \neg{}A \to \neg{}B$.
\end{proof}

\subsection{Completeness}
Our basic system consists of the axioms A1 -- A11 and rules R1 -- R6. This system can be extended by some (or all) of A12 to A16.

As usual a \emph{proof} is a finite sequence of formulas of $\mathcal{L}$ such that every formula in the sequence is either an instance of one of the axiom schemata in play or is obtained from previous members of the sequence by application of rules R1 -- R6. We write $\vdash A$ if there is a proof whose last member is $A$.

\begin{lemma}\label{assoccom} In the basic system, hence in all considered here, $\vdash A \to (A \land A)$; $\vdash (A \land A) \to A$; $\vdash A \to (A \lor A)$; $\vdash (A \lor A) \to A$; $\vdash (A \land B) \to (B \land A)$; $\vdash (A \land (B \land C)) \to ((A \land B) \land C)$; $\vdash ((A \land B) \land C) \to (A \land (B \land C))$; $\vdash (A \lor B) \to (B \lor A)$; $\vdash (A \lor (B \lor C)) \to ((A \lor B) \lor C)$; $\vdash ((A \lor B) \lor C) \to (A \lor (B \lor C))$.
\end{lemma}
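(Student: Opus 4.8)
The plan is to derive each of the ten listed theorems purely syntactically, using only axioms A1--A11 and rules R1--R6, and leaning heavily on the Weak Deduction Theorem (Theorem \ref{WDT}) together with the soundness already established—though for a clean syntactic proof I would actually stay inside the proof system, using prefixing (R3), suffixing (R4), adjunction (R2) and modus ponens (R1). The key observation is that all ten facts are $\to$-free-fragment facts about $\land$ and $\lor$, and the machinery needed is exactly: A2 and A3 (the introduction/elimination axioms for $\lor$ and $\land$), A4 ($\lor$-elimination packaged as a conditional), A5 ($\land$-introduction packaged as a conditional), and the distribution axiom A6, plus the transitivity of $\to$ which is available via suffixing/prefixing and adjunction even before A14 is added (one gets ``$\vdash A\to B$ and $\vdash B\to C$ imply $\vdash A\to C$'' from R3 or R4 followed by R1).

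First I would record the transitivity-of-$\to$ meta-rule just mentioned, since every subsequent derivation is a short chain. Then: \emph{Idempotence.} $\vdash A\to(A\land A)$ is immediate from A5 applied to the two instances $\vdash A\to A$ (A1) and $\vdash A\to A$, i.e. from $((A\to A)\land(A\to A))\to(A\to(A\land A))$ after adjoining the two copies of A1 by R2 and detaching by R1; $\vdash(A\land A)\to A$ is just A3. Dually, $\vdash(A\lor A)\to A$ comes from A4 with $\vdash A\to A$ twice, and $\vdash A\to(A\lor A)$ is A2. \emph{Commutativity.} $\vdash(A\land B)\to(B\land A)$: from A3 we have $\vdash(A\land B)\to B$ and $\vdash(A\land B)\to A$; A5 gives $((A\land B)\to B)\land((A\land B)\to A))\to((A\land B)\to(B\land A))$, so adjoin and detach. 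Dually for $\vdash(A\lor B)\to(B\lor A)$ using A2 and A4. \emph{Associativity of $\land$.} For $\vdash(A\land(B\land C))\to((A\land B)\land C)$: using A3 repeatedly and transitivity, derive $\vdash(A\land(B\land C))\to A$, $\vdash(A\land(B\land C))\to B$, $\vdash(A\land(B\land C))\to C$; from the first two and A5 get $\vdash(A\land(B\land C))\to(A\land B)$; combine with the third via A5 again to get the target. The reverse direction is symmetric. \emph{Associativity of $\lor$.} Dually, using A2 to get $\vdash A\to(A\lor(B\lor C))$ etc., and A4 to combine; this is the exact mirror image of the $\land$ case.

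The only step that takes a moment's thought—and the one I'd flag as the main obstacle—is confirming that transitivity of $\to$ is genuinely available in the \emph{basic} system (A1--A11, R1--R6) without invoking A14, since A14 is only conditionally present. But it is: from a proof of $\vdash A\to B$, rule R4 (suffixing) yields $\vdash(B\to C)\to(A\to C)$, and then given a proof of $\vdash B\to C$, modus ponens (R1) delivers $\vdash A\to C$. (Alternatively R3 then R1.) With that meta-rule in hand, every one of the ten derivations is a two- or three-line chain of the shape ``apply A2/A3 to get the component conditionals, adjoin by R2, detach the A4/A5 instance by R1, chain by transitivity,'' and the distribution axiom A6 is not even needed for these particular ten—it is listed for later use. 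I would present the $\land$-idempotence and $\land$-commutativity cases in full and then remark that $\lor$-idempotence, $\lor$-commutativity, and both associativities follow by the same pattern (with the dualities $\land\leftrightarrow\lor$, A3$\leftrightarrow$A2, A5$\leftrightarrow$A4), leaving the routine chains to the reader.
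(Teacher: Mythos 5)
Your proof is correct and follows essentially the same route as the paper's: the paper's own (very terse) proof simply observes that A1, A3, A5 with R1, R2, R3 yield the conjunction facts and A1, A2, A4 with R1, R2, R4 yield the disjunction facts, which is exactly the machinery you deploy, including deriving transitivity of $\to$ from prefixing/suffixing plus modus ponens rather than from A14. Your write-up just makes explicit the chains the paper leaves to the reader (and rightly notes A6 is not needed), so no changes are required beyond dropping the opening aside about the Deduction Theorem and soundness, which your purely syntactic derivations never actually use.
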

\begin{proof} Axioms A1, A3 and A5 and rules R1, R2, and R3 ensure the idempotency, associativity and commutativity of conjunction; axioms A1, A2 and A4 and rules R1, R2, and R4 ensure the idempotency, associativity and commutativity of disjunction.
\end{proof}

\begin{definition}[Theories]~\ 
\textcolor{blue}{Let $X, Y$ be a non-empty sets} of formulas, then 
\begin{enumerate}[label=\alph*)]
\item we write $X \vdash B$ iff there are formulas $A_{1}, A_{2}, \ldots, A_{n} \in X, n > 0$, such that $\vdash(A_{1} \wedge A_{2} \wedge \ldots \wedge A_{n}) \to B$;

\item we write $X \vdash Y$ iff $(\exists B_{1}, B_2, \ldots, B_n \in Y) ~X \vdash B_1 \lor B_2 \lor \ldots \lor B_n$;

\item $X$ is a \emph{theory} iff, for all $A \in {\mathcal{L}}$, $A \in X$ when $X \vdash A$;

\item a theory $X$ is \emph{prime} iff, for all $A, B \in {\mathcal{L}}$,  $A \vee B \in X$ iff $A \in X$ or $B \in X$;

\item a theory $X$ is \emph{proper} iff $X \neq \mathcal{L}$;

\item a theory $X$ is \emph{logical} iff, for all of A1 -- A11 and whichever of A12 --A16 we may choose to add, every instance in $\mathcal{L}$ belongs to $X$.  (This makes `logical' a relative notion.)
\end{enumerate}
\end{definition}

\begin{lemma}[Properties of theories]\label{propth} Let $X$ be a theory. Then
\begin{enumerate}[label=\roman*)]
\item $X \vdash A$ iff $A \in X$;
\item \label{nondisj} if $Y$ is a set of formulas, then $X \vdash Y$ if $X \cap Y \neq \emptyset$;

\item \label{DT} if $A \in X$ and $\vdash A \to B$ then $B \in X$;

\item \label{conj} $X$ is closed under conjunction;

\item \label{clostheory} the deductive closure of a set of sentences is a theory, \textit{i.e.}, where $Y \subseteq \mathcal{L}$, $\lbrace A \in \mathcal{L} : Y \vdash A \rbrace$ is a theory.

\end{enumerate}

\end{lemma}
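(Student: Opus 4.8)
The plan is to dispatch parts i)--iv) directly from the definition of \emph{theory} together with axiom A1, and to reserve the real work for part v). For i), the left-to-right direction is simply clause (c) of the definition of theories ($A \in X$ whenever $X \vdash A$), while the right-to-left direction uses $\vdash A \to A$ (A1): the single witness $A_1 = A \in X$ satisfies $\vdash A_1 \to A$, so $X \vdash A$ by clause (a). Part iii) is the same idea: from $A \in X$ and $\vdash A \to B$, clause (a) with the single witness $A$ gives $X \vdash B$, hence $B \in X$ since $X$ is a theory. For ii), pick $B \in X \cap Y$; by i) we have $X \vdash B$, and $B$ is a one-term disjunction of members of $Y$, so $X \vdash Y$ by clause (b). For iv), if $A, B \in X$, take the two witnesses $A, B$ and the A1-instance $\vdash (A \land B) \to (A \land B)$ to conclude $X \vdash A \land B$ by clause (a), whence $A \land B \in X$.

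For part v), write $Z = \lbrace A \in \mathcal{L} : Y \vdash A \rbrace$. First note $Z$ is non-empty, since any $B \in Y$ lies in $Z$ (as $\vdash B \to B$), so $Z$ is a legitimate candidate for being a theory; what must be shown is that $Z \vdash A$ implies $A \in Z$. Unpacking clause (a), $Z \vdash A$ yields $C_1, \dots, C_n \in Z$ with $\vdash (C_1 \land \dots \land C_n) \to A$, and each $C_i \in Z$ yields a finite list $D^i_1, \dots, D^i_{m_i} \in Y$ with $\vdash (D^i_1 \land \dots \land D^i_{m_i}) \to C_i$. Let $D_1, \dots, D_m$ enumerate the finitely many formulas occurring among all the $D^i_j$; all of them lie in $Y$. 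The target is $\vdash (D_1 \land \dots \land D_m) \to A$, which gives $Y \vdash A$, i.e. $A \in Z$, as required.

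The step that assembles this target from the pieces is, I expect, the only genuine obstacle, and it is pure conjunction-bookkeeping. It is cleanest to isolate first, as derived facts from the basic system, the transitivity of $\vdash$-provable conditionals (``if $\vdash C \to D$ and $\vdash D \to E$ then $\vdash C \to E$'', immediate from prefixing R3 and modus ponens R1) and conjunction-splitting of a common antecedent (``if $\vdash C \to D_1$ and $\vdash C \to D_2$ then $\vdash C \to (D_1 \land D_2)$'', from A5, adjunction R2 and R1). Using A3 (the conjunction projections) with the associativity, commutativity and idempotency of $\land$ from Lemma~\ref{assoccom}, one gets $\vdash (D_1 \land \dots \land D_m) \to D^i_j$ for every $i, j$; an induction using the conjunction-splitting fact then gives $\vdash (D_1 \land \dots \land D_m) \to (D^i_1 \land \dots \land D^i_{m_i})$ for each $i$, and composing with $\vdash (D^i_1 \land \dots \land D^i_{m_i}) \to C_i$ via transitivity gives $\vdash (D_1 \land \dots \land D_m) \to C_i$. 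A second application of conjunction-splitting yields $\vdash (D_1 \land \dots \land D_m) \to (C_1 \land \dots \land C_n)$, and one final use of transitivity with $\vdash (C_1 \land \dots \land C_n) \to A$ finishes the proof. With the two derived facts in hand the remaining inductions on $m$ and $n$ are entirely routine.
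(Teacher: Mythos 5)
Your proposal is correct and follows essentially the same route as the paper: parts i)--iv) straight from the definition of theoryhood plus A1 (and R2/A5 where needed), and for v) the same decomposition — extract the finite witnesses, project the big conjunction of $Y$-formulas onto each piece via A3 together with Lemma~\ref{assoccom}, recombine with A5/R2/R1, and compose with transitivity of provable conditionals. The only differences are cosmetic (you derive transitivity via prefixing R3 where the paper invokes suffixing R4, and you deduplicate the enumerated conjuncts), so there is nothing to correct.
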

\begin{proof}
\begin{enumerate}[label=\roman*)]
\item `If' from c) in the definition above. `Only if' from axiom schema A1.
\item and iii) follow trivially.
\end{enumerate}
\begin{enumerate}[label=\roman*)]
\addtocounter{enumi}{3}
\item Let $A, B \in X$. By (A1), $\vdash (A \land B) \to (A \land B)$; as $A, B\in X$, $X \vdash A\wedge B$ by definition. As $X$ a theory, $A \land B \in X$.

\item Let $Z = \lbrace A \in \mathcal{L} : Y \vdash A \rbrace$. If $Z \vdash A$ then $(\exists B_1, \ldots, B_n \in Z) ~\vdash (B_1 \land \ldots \land B_n) \to A$. As $Y \vdash B_i$ for each $B_i$, there are $ C_1^i, \ldots, C_{i_{m}}^i \in Y$ such that $\vdash C_1^i  \land \ldots \land C_{i_{m}}^i \to B_i$. By A3 and R4 (suffixing), and appealing heavily to Lemma \ref{assoccom}, for each $i$, $1 \leq i \leq n$, $~\vdash (C_1^1 \land \ldots \land C_{i_{1}}^1 \land C_1^2 \land \ldots \land C_{i_{2}}^2 \land \ldots \land C_1^n \land \ldots \land C_{i_{n}}^n ) \to B_i$; by R2, A5, and R1, $~\vdash (C_1^1 \land \ldots \land C_{i_{1}}^1 \land \ldots \land C_1^n \land \ldots \land C_{i_{n}}^n ) \to (B_1 \land \ldots \land B_n)$ and by R4 (suffixing), $~\vdash (C_1^1 \land \ldots \land C_{i_{1}}^1 \land \ldots \land C_1^n \land \ldots \land C_{i_{n}}^n ) \to A$. Hence $Y \vdash A$ and $A \in Z$.
\end{enumerate}
\end{proof}


\begin{lemma} If $\vdash A \to (B \lor C)$ and $\vdash (D \land C) \to E$ then $\vdash (A \land D) \to (B \lor E)$.
\end{lemma}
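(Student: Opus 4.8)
The plan is to derive the result purely syntactically in the basic system, chaining implications by transitivity. First I would record a few routine derived rules. Transitivity of $\to$ as a rule --- from $\vdash A \to B$ and $\vdash B \to C$ infer $\vdash A \to C$ --- follows by applying R4 (suffixing) to $\vdash B \to C$'s antecedent: from $\vdash A \to B$ we get $\vdash (B \to C) \to (A \to C)$, and then R1 (modus ponens) with $\vdash B \to C$ gives $\vdash A \to C$. Monotonicity of conjunction and disjunction in each argument --- e.g.\ from $\vdash A \to B$ infer $\vdash (A \land C) \to (B \land C)$, and from $\vdash A \to B$ infer $\vdash (A \lor C) \to (B \lor C)$ --- follows from the projection/injection axioms A3 and A2, adjunction R2, the axioms A5 and A4, R1, and Lemma \ref{assoccom} for the symmetric variants. (For instance, $\vdash (A \land C) \to A$ by A3, hence $\vdash (A \land C) \to B$ by transitivity; with $\vdash (A \land C) \to C$ by A3, R2 and A5 then give $\vdash (A \land C) \to (B \land C)$.)

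With these in hand the argument is a short chain. Applying $\land D$-monotonicity to the first hypothesis $\vdash A \to (B \lor C)$ yields $\vdash (A \land D) \to ((B \lor C) \land D)$. Using commutativity of $\land$ and of $\lor$ (Lemma \ref{assoccom}) together with the distribution axiom A6 in the instance $(D \land (C \lor B)) \to ((D \land C) \lor B)$, and composing by transitivity, one gets $\vdash ((B \lor C) \land D) \to ((D \land C) \lor B)$. Applying $\lor B$-monotonicity to the second hypothesis $\vdash (D \land C) \to E$ yields $\vdash ((D \land C) \lor B) \to (E \lor B)$, and commutativity of $\lor$ gives $\vdash (E \lor B) \to (B \lor E)$. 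Composing all of these by transitivity yields $\vdash (A \land D) \to (B \lor E)$.

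There is no serious obstacle here; the content is entirely in the derived monotonicity and transitivity rules, which are standard consequences of A2--A5, R1, R2, R4 and Lemma \ref{assoccom}. The only point requiring care is to arrange the bracketing and the order of the disjuncts and conjuncts so that the single distribution axiom A6 applies in the direction actually available (from $X \land (Y \lor Z)$ to $(X \land Y) \lor Z$), which is why the commutativity steps are inserted both before and after the appeal to A6.
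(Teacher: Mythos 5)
Your proof is correct and follows essentially the same route as the paper's: both arguments chain implications (via R1 with R3/R4) through $(A \land D) \to ((B \lor C) \land D) \to (D \land (C \lor B)) \to ((D \land C) \lor B) \to (B \lor E)$, with the distribution axiom A6 doing the key work and A2--A5, R1, R2 and Lemma \ref{assoccom} supplying the monotonicity and commutativity steps. The only difference is presentational: you package the bookkeeping into derived transitivity and monotonicity rules, whereas the paper inlines those appeals step by step.
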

\begin{proof} By A3, R1 and R4, $\vdash (A \land D) \to (B \lor C)$; by A3, A5, R1 and R2, $\vdash (A \land D) \to ((B \lor C) \land D)$. By A2, A3, A4, A5, R1 and R2, $\vdash ((B \lor C) \land D) \to (D \land (C \lor B))$. By R1 and R3, $\vdash (A \land D) \to (D \land (C \lor B))$. By A6, R1 and R3, $\vdash (A \land D) \to ((D \land C) \lor B)$. By A2 and R3, $\vdash (D \land C) \to (B \lor E)$. By A2, A4, R1 and R2, $\vdash ((D \land C) \lor B) \to (B \lor E)$. By R1 and R3, $\vdash (A \land D) \to (B \lor E)$.
\end{proof}

\begin{cor}[Cut] If $X \cup \lbrace C \rbrace \vdash Y$ and $X^{\prime} \vdash Y^{\prime} \cup \lbrace C \rbrace$ then $X \cup X^{\prime} \vdash Y \cup Y^{\prime}$.
\end{cor}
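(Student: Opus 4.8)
The Cut corollary is a purely syntactic consequence of the preceding Lemma, obtained by instantiating the schematic letters appropriately and then packaging finite conjunctions and disjunctions via Lemma~\ref{assoccom}. So the plan is: unpack the definitions of $X \cup \{C\} \vdash Y$ and $X' \vdash Y' \cup \{C\}$ into concrete provable formulas, apply the Lemma once, and repack.

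First I would unpack the hypotheses. From $X \cup \{C\} \vdash Y$, by clause (b) of the definition of theories, there are $B_1, \dots, B_k \in Y$ and finitely many formulas from $X \cup \{C\}$ whose conjunction implies $B_1 \lor \dots \lor B_k$; write the conjunction of those drawn from $X$ as $\f$ (if any are drawn from $X$) and note $C$ may or may not occur. The cleanest move is to observe that by A3, R1, R4 and Lemma~\ref{assoccom} we may always arrange matters so that the antecedent has the form $\f \land C$ with $\f$ a conjunction of members of $X$: if $C$ did not occur we weaken the antecedent by conjoining $C$ (using $(\f \land C) \to \f$, i.e. A3, and suffixing R4); if no member of $X$ occurred we similarly conjoin in an arbitrary member of $X$ — here we use that $X$ is non-empty, which is exactly why the definition stipulates non-emptiness. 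So we get $\vdash (\f \land C) \to (B_1 \lor \dots \lor B_k)$ with $\f$ a conjunction over $X$ and each $B_i \in Y$. Set $B := B_1 \lor \dots \lor B_k$, so $\vdash \f \to (B \lor C')$ trivially would not be the right shape; rather, rename so that the Lemma applies: we actually want the first hypothesis in the form $\vdash A \to (B \lor C)$. Symmetrically, from $X' \vdash Y' \cup \{C\}$ we get $\vdash \p \to (C \lor B')$ where $\p$ is a conjunction over $X'$ and $B' = B_1' \lor \dots \lor B_m'$ is a disjunction over $Y'$; reordering the disjunction (Lemma~\ref{assoccom}, R1, R3), this is the Lemma's second hypothesis once we also trivialise its $(D \land C) \to E$ shape.

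The main adjustment is that the Lemma as stated takes the second premise in the form $\vdash (D \land C) \to E$, not $\vdash \p \to (C \lor B')$. To bridge this I would apply the Lemma with $D := \p$ and $E := B'$, but first I must massage the \emph{first} premise: I have $\vdash \f \land C \to B$-ish, whereas the Lemma wants $\vdash A \to (B \lor C)$. The correct route is to recall that in the unpacked first hypothesis $C$ need \emph{not} appear in the antecedent — so keep it as $\vdash \f \to (B \lor C)$ directly if $C$ occurs among the $B_i$'s of $Y$-side, or more robustly: apply the Lemma with $A := \f$, $B := $ the disjunction over $Y$ omitting $C$, and absorb the possible $C$-disjunct, and with $D := \p$, $C$ as is, $E := $ the disjunction over $Y'$. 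Then the Lemma yields $\vdash (\f \land \p) \to (B \lor E)$, i.e. $\vdash (\f \land \p) \to (B_1 \lor \dots \lor B_k \lor B_1' \lor \dots \lor B_m')$. Since $\f$ is a conjunction of members of $X$ and $\p$ of members of $X'$, the conjunction $\f \land \p$ is a conjunction of members of $X \cup X'$ (up to reassociation, Lemma~\ref{assoccom}), and the succedent is a disjunction of members of $Y \cup Y'$; by definition (b) this is exactly $X \cup X' \vdash Y \cup Y'$.

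**The obstacle.** The only real subtlety — and the reason the non-emptiness stipulation on $X$, $Y$ was flagged in blue in the Definition — is the bookkeeping around the degenerate cases: $C$ might not actually be used on either side, or $X$, $X'$, $Y$, $Y'$ might contribute no formulas to a particular derivation. Handling these uniformly requires the non-emptiness of the sets (so there is always \emph{some} formula to conjoin or disjoin in) together with repeated, unglamorous appeals to A2, A3, Lemma~\ref{assoccom}, and R1–R4 to pad and reassociate antecedents and succedents into the exact shape the Lemma consumes. I expect the write-up to consist almost entirely of this padding, with the substantive content being the single invocation of the preceding Lemma.
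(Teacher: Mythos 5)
Your overall strategy is the paper's: unpack both hypotheses into single provable implications, apply the preceding lemma once, and repackage, with some padding via A2, A3, R3, R4 and Lemma on associativity/commutativity for the degenerate cases. But the one substantive step --- the instantiation of the lemma --- is carried out backwards, and the ``bridging'' moves you propose to repair the resulting mismatch are invalid. From $X \cup \{C\} \vdash Y$ you correctly get $\vdash (\varphi \land C) \to (B_1 \lor \ldots \lor B_n)$ with $\varphi$ a conjunction over $X$, and from $X' \vdash Y' \cup \{C\}$ you correctly get $\vdash \psi \to (B_1' \lor \ldots \lor B_l' \lor C)$ with $\psi$ a conjunction over $X'$. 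These are already exactly the shapes the lemma consumes, provided you match them the right way round: the \emph{second} Cut hypothesis is the lemma's \emph{first} premise ($A := \psi$, $B :=$ the disjunction over $Y'$, $C := C$), and the \emph{first} Cut hypothesis is the lemma's \emph{second} premise ($D := \varphi$, $E :=$ the disjunction over $Y$). The lemma then yields $\vdash (\psi \land \varphi) \to (B_1' \lor \ldots \lor B_l' \lor B_1 \lor \ldots \lor B_n)$, and commutation/reassociation finishes the job. Instead, your final assignment takes $A := \varphi$ with $C$ as a disjunct on the $Y$-side and $D := \psi$, $E :=$ the disjunction over $Y'$, which would require converting $\vdash (\varphi \land C) \to B$ into $\vdash \varphi \to (B \lor C)$ and $\vdash \psi \to (C \lor B')$ into $\vdash (\psi \land C) \to B'$. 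Neither conversion is available: they are not instances of any axiom or rule here, and they are not even classically valid (classically they would correspond to moving $\neg C$, not $C$, across the turnstile). Likewise the phrase ``the disjunction over $Y$ omitting $C$'' signals a confusion: in $X \cup \{C\} \vdash Y$ the formula $C$ sits on the conjunctive (antecedent) side, never among the disjuncts from $Y$.

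The fix is trivial --- simply swap which hypothesis feeds which premise, as above, and no massaging beyond reordering the disjunction $C \lor B'$ to $B' \lor C$ is needed. Your remarks about padding (conjoining $C$ or a member of $X$ into an antecedent via A3 and R4 when they were not used, disjoining $C$ into a consequent via A2 and R3, exploiting non-emptiness of the sets) are correct and would make explicit some bookkeeping the paper leaves tacit; but as written, the central application of the lemma fails.
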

\begin{proof} Suppose that $X \cup \lbrace C \rbrace \vdash Y$ and $X^{\prime} \vdash Y^{\prime} \cup \lbrace C \rbrace$. Then there are  $A_{1}, \ldots, A_m \in X$  and $B_{1}, \ldots, B_n\in Y$ such that $\vdash (A_1 \land \ldots \land A_m \land C) \to (B_1 \lor \ldots \lor B_n)$. Similarly there are $ A_{1}^{\prime}, , \ldots, A_k^{\prime} \in X^{\prime}$ and $ B_{1}^{\prime}, \ldots, B_l^{\prime} \in Y$ such that $ ~\vdash (A_1^{\prime} \land \ldots \land A_k^{\prime}) \to (B_1^{\prime} \lor \ldots \lor B_l^{\prime} \lor C)$.  By the previous lemma, $\vdash (  A_1 \land \ldots \land A_m \land A_1^{\prime}  \land \ldots \land A_k^{\prime}) \to (B_1 \lor \ldots \lor B_n \lor B_1^{\prime} \lor \ldots \lor B_l^{\prime}  )$. Hence $X \cup X^{\prime} \vdash Y \cup Y^{\prime}$.
\end{proof}

\begin{thm}[Lindenbaum's Lemma] If we have two sets of formulas  $X, Y$ such that $X \nvdash Y$ then we can extend them to $X^{\prime}, Y^{\prime}$ such that $X \subseteq X^{\prime}, Y \subseteq Y^{\prime},X^{\prime} \nvdash Y^{\prime} $ and their union is the whole language $ X^{\prime} \cup Y^{\prime} = \mathcal{L}, $. Moreover $X^{\prime}$ is a prime theory and $Y^{\prime}$ is closed under disjunction.
\end{thm}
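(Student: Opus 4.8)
The plan is to carry out a Lindenbaum-style ``pair saturation'': starting from $(X,Y)$, decide one formula at a time whether it may safely be added to the left component or must be sent to the right, and then pass to the limit. The one substantive ingredient is the following dichotomy, read straight off the Cut corollary above: if $X^{*}\nvdash Y^{*}$, then for every formula $C$ either $X^{*}\cup\{C\}\nvdash Y^{*}$ or $X^{*}\nvdash Y^{*}\cup\{C\}$. For if both failed we would have $X^{*}\cup\{C\}\vdash Y^{*}$ and $X^{*}\vdash Y^{*}\cup\{C\}$, and applying Cut with its $X$ and $X'$ both taken to be $X^{*}$ and its $Y$ and $Y'$ both taken to be $Y^{*}$ would give $X^{*}\vdash Y^{*}$, a contradiction.

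First I would fix an enumeration $C_{0},C_{1},C_{2},\ldots$ of $\mathcal{L}$ (should $\mathcal{L}$ be uncountable, one runs the same recursion transfinitely, or applies Zorn's Lemma to the poset of pairs $(X^{*},Y^{*})$ with $X\subseteq X^{*}$, $Y\subseteq Y^{*}$ and $X^{*}\nvdash Y^{*}$, ordered componentwise). Put $(X_{0},Y_{0})=(X,Y)$ and, given $(X_{n},Y_{n})$ with $X_{n}\nvdash Y_{n}$, set $(X_{n+1},Y_{n+1})=(X_{n}\cup\{C_{n}\},\,Y_{n})$ if $X_{n}\cup\{C_{n}\}\nvdash Y_{n}$, and $(X_{n+1},Y_{n+1})=(X_{n},\,Y_{n}\cup\{C_{n}\})$ otherwise; by the dichotomy the latter pair again satisfies $X_{n+1}\nvdash Y_{n+1}$, so $X_{n}\nvdash Y_{n}$ holds at every stage and $C_{n}\in X_{n+1}\cup Y_{n+1}$. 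Finally set $X'=\bigcup_{n}X_{n}$ and $Y'=\bigcup_{n}Y_{n}$. Then $X\subseteq X'$ and $Y\subseteq Y'$ are immediate, $X'$ is non-empty because $X$ is, and $X'\cup Y'=\mathcal{L}$ because each $C_{n}$ was placed on one side at stage $n+1$.

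Next I would verify $X'\nvdash Y'$. By clauses (a) and (b) of the definition of $\vdash$ between sets, a witness to $X'\vdash Y'$ consists of finitely many $A_{1},\ldots,A_{m}\in X'$ and finitely many $B_{1},\ldots,B_{k}\in Y'$ with $\vdash(A_{1}\land\ldots\land A_{m})\to(B_{1}\lor\ldots\lor B_{k})$; choosing $N$ past the stage at which all these finitely many formulas have entered, we would get $X_{N}\vdash Y_{N}$, contrary to the construction. Hence $X'\nvdash Y'$. It then follows that $X'$ is a theory: if $X'\vdash A$ but $A\notin X'$, then $A\in Y'$, so $X'\vdash Y'$ via the one-element disjunction $A$, a contradiction; hence $A\in X'$. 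For primeness: if $A\in X'$ then $X'\vdash A\lor B$ by A2, hence $A\lor B\in X'$ (Lemma \ref{propth}), and symmetrically from $B\in X'$; conversely, if $A\lor B\in X'$ while $A\notin X'$ and $B\notin X'$, then $A,B\in Y'$ and $X'\vdash A\lor B$ gives $X'\vdash Y'$, again impossible. Finally, if $A,B\in Y'$ then $A,B\notin X'$, so by primeness $A\lor B\notin X'$, whence $A\lor B\in Y'$; thus $Y'$ is closed under disjunction.

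The only real obstacle here is bookkeeping: getting the dichotomy and its instantiation of Cut exactly right (it is easy to mismatch the two pairs in the statement of Cut), and observing that the limit step preserves non-derivability precisely because proofs — and hence witnesses of $\vdash$ between sets — are finite. Everything else is routine.
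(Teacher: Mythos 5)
Your proposal is correct and follows essentially the same route as the paper: the same enumeration-and-sorting construction, the same instance of Cut (with both pairs taken to be the current stage) to show a formula can always be added to one side, the same appeal to finiteness of witnesses at the limit, and the same derivation of theoryhood, primeness and disjunction-closure of $Y'$ from $X'\nvdash Y'$ together with $X'\cup Y'=\mathcal{L}$. The only additions — the explicit easy direction of primeness via A2 and the remark about uncountable languages — are harmless refinements of the paper's argument.
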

\begin{proof} Let $A_{1}, \ldots, A_n \ldots$ be an enumeration of $\mathcal{L}$. Let
\begin{align*}
X_0 &= X, & Y_0 &= Y\\
X_{n + 1} &= X_{n} \cup \lbrace A_n \rbrace, & Y_{n + 1} &= Y_n ~~~~~~if~ X_n \cup \lbrace A_n \rbrace \nvdash Y_n\\\
X_{n + 1} &= X_n, & Y_{n + 1} &= Y_n \cup \lbrace A_n \rbrace ~~~~~~if~ X_n \cup \lbrace A_n \rbrace \vdash Y_n\\
X^{\prime} &= \bigcup\limits_{n \in \mathbb{N}} X_n & Y^{\prime} &= \bigcup\limits_{n \in \mathbb{N}} Y_n
\end{align*}
Obviously, $X \subseteq X^{\prime}, Y \subseteq Y^{\prime}$ and $X^{\prime} \cup Y^{\prime} = \mathcal{L}$. We show by induction that $X_n \nvdash Y_n$ for all $n$. By definition $X_0 \nvdash Y_0$.  Suppose that $X_n \nvdash Y_n$ and $X_{n + 1} \vdash Y_{n + 1}$. If $X_{n} \cup \lbrace A_n \rbrace \nvdash Y_n$ then by the construction $X_{n + 1} = X_{n} \cup \lbrace A_n \rbrace$ and $Y_{n + 1} = Y_n$. Then $X_{n + 1} \vdash Y_{n + 1}$ implies   $X_{n} \cup \lbrace A_n \rbrace \vdash Y_n$ contrary to what we assumed. So it must be the case that  $X_{n} \cup \lbrace A_n \rbrace \vdash Y_n$ and by construction $X_{n + 1} = X_n$, $Y_{n + 1} = Y_n \cup \lbrace A_n \rbrace$. Hence $X_{n} \cup \lbrace A_n \rbrace \vdash Y_n$ and $X_n \vdash Y_n \cup \lbrace A_n \rbrace$. By Cut, $X_n \vdash Y_n$, a contradiction. Hence for all $n \in \mathbb{N}$, $X_n \nvdash Y_n$. But now, if $X^{\prime} \vdash Y^{\prime}$ then, by the finiteness of proof, for some $n \in \mathbb{N}$, $X_{n + 1} \vdash Y_{n + 1}$, which we have just shown is not possible. Hence $X^{\prime} \nvdash Y^{\prime}$.

Suppose that $X^{\prime} \vdash A$ and $A \notin X^{\prime}$. As $X^{\prime} \cup Y^{\prime} = \mathcal{L}$, $A \in Y^{\prime}$.
But then $X^{\prime} \vdash Y^{\prime}$ which we have just shown not to be the case. Thus $A \in X^{\prime}$ and $X^{\prime}$ is a theory.

Suppose that $A \lor B \in X^{\prime}$, $A \notin X^{\prime}$ and $B \notin X^{\prime}$. As $X^{\prime} \cup Y^{\prime} = \mathcal{L}$, $A \in Y^{\prime}$ and $B \in Y^{\prime}$. 
So $X^{\prime} \vdash Y^{\prime}$ which we have just shown not to be the case. Thus $A \in X^{\prime}$ or $B \in X^{\prime}$. $X^{\prime}$ is prime. (Notice that since $X^{\prime} \cup Y^{\prime} = \mathcal{L}$ and, by Lemma \ref{propth} \ref{nondisj}, $X^{\prime} \cap Y^{\prime} = \emptyset$, $X^{\prime}$ is prime iff $Y^{\prime}$ is closed under disjunctions.)
\end{proof}

\noindent (Thanks to the appeal to Cut, in deriving Lindenbaum's Lemma we have used axiom schemata A1, A2, A3, A4, A5 and A6 and rules R1, R2, R3, and R4.)

\begin{cor}\label{LindCor} Let $X$ be a theory and $Y$ a set of formulas disjoint from $X$ and closed under disjunction. Then there is a prime theory $X^{\prime}$ such that $X \ssq X^{\prime}$ and $X^{\prime}$ is disjoint from $Y$.
\end{cor}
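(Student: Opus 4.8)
The plan is to derive this directly from Lindenbaum's Lemma applied to the pair $X, Y$. The only hypothesis of that lemma that is not already among the assumptions of the corollary is $X \nvdash Y$, so the first step is to verify exactly that, and the rest is bookkeeping.

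To see $X \nvdash Y$ I would argue by contradiction. Suppose $X \vdash Y$. Unpacking the definition, there are $B_1, \dots, B_n \in Y$ with $n > 0$ such that $X \vdash B_1 \lor \dots \lor B_n$. Since $X$ is a theory, the first clause of Lemma \ref{propth} gives $B_1 \lor \dots \lor B_n \in X$. On the other hand $B_1 \lor \dots \lor B_n \in Y$ as well: for $n = 1$ this is immediate, and for $n \geq 2$ it is exactly the assumption that $Y$ is closed under disjunction. Hence $B_1 \lor \dots \lor B_n \in X \cap Y$, contradicting the assumed disjointness of $X$ and $Y$. So $X \nvdash Y$.

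Now I would invoke Lindenbaum's Lemma to obtain $X' \supseteq X$ and $Y' \supseteq Y$ with $X' \cup Y' = \mathcal{L}$, $X' \nvdash Y'$, and $X'$ a prime theory. It remains to check $X'$ is disjoint from $Y$: since $X' \nvdash Y'$, Lemma \ref{propth} \ref{nondisj} (taken with $Y'$ in the role of $Y$) forces $X' \cap Y' = \emptyset$, and since $Y \ssq Y'$ we get $X' \cap Y \ssq X' \cap Y' = \emptyset$. Thus $X'$ is the required prime theory extending $X$ and disjoint from $Y$. I expect no genuine obstacle here: everything beyond reading off the definition of $X \vdash Y$ and recalling that a theory contains precisely the formulas it derives was already done in the proof of Lindenbaum's Lemma via the Cut corollary; the only point needing a little care is the $n=1$ case above, where closure under disjunction is not used but deductive closure of $X$ is.
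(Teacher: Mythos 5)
Your proposal is correct and follows essentially the same route as the paper's proof: show $X \nvdash Y$ by noting that otherwise a disjunction of members of $Y$ would lie in $X$ (by theoryhood) and in $Y$ (by closure under disjunction), then invoke Lindenbaum's Lemma and Lemma \ref{propth} \ref{nondisj} to conclude that $X'$ is disjoint from $Y' \supseteq Y$. Your extra remark about the $n=1$ case is a harmless refinement the paper leaves implicit.
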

\begin{proof}
If $X \vdash Y$ then, for some $B_1, B_2, \ldots, B_n \in Y$, $X \vdash B_1 \vee B_2 \vee \ldots \vee B_n$. But then $B_1 \vee B_2 \vee \ldots \vee B_n \in X$, as $X$ is a theory and $B_1 \vee B_2 \vee \ldots \vee B_n \in Y$ as $Y$ is closed under disjunction so, contrary to hypothesis, $X \cap Y \neq \emptyset$. So $X \nvdash Y$ and, by Lindenbaum's Lemma, $\exists X^{\prime}, Y^{\prime}$ such that $X \subseteq X^{\prime}, Y \subseteq Y^{\prime}, X^{\prime} \cup Y^{\prime} = \mathcal{L}, X^{\prime} \nvdash Y^{\prime}$ and $X^{\prime}$ is a prime theory. By Lemma \ref{propth} \ref{nondisj}, $X^{\prime}$ and $Y^{\prime}$ are disjoint, hence $X^{\prime}$ is disjoint from $Y$.
\end{proof}

\begin{definition}[Operations on sets of sentences] ~\ Let $X$ and $Y$ be non-empty sets of formulas. Then
\begin{enumerate}[label=\alph*)]
\item For subsets $X$ and $Y$ of $\mathcal{L}$, $X \ogreaterthan Y = \lbrace C \in \mathcal{L} : (\exists A \in \mathcal{L})[ A \to C \in X \mathit{~and~} A \in Y ] \rbrace$.
\item For subsets $X$ and $Y$ of $\mathcal{L}$, $X \ovee Y = \lbrace \neg(A \to  C): A \in X, \neg{}C \in Y \rbrace$.
\end{enumerate}
\end{definition}

\begin{lemma}[A fact about $\ogeq$]\label{ogeqth} When $X$ and $Y$ are theories, $X \ogreaterthan Y$ is a theory.
\end{lemma}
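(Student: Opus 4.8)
The plan is to verify directly that $X \ogeq Y$ satisfies clause c) of the definition of \emph{theory}, \textit{i.e.}, that it is deductively closed. So suppose $X \ogeq Y \vdash D$. By the definition of $\vdash$ there are $C_1, \dots, C_n \in X \ogeq Y$, $n > 0$, with $\vdash (C_1 \land \cdots \land C_n) \to D$, and, unpacking membership in $X \ogeq Y$, for each $i$ a formula $A_i$ such that $A_i \to C_i \in X$ and $A_i \in Y$. The natural witness to produce is $A := A_1 \land \cdots \land A_n$: since $Y$ is a theory it is closed under conjunction (Lemma \ref{propth} \ref{conj}), so $A \in Y$, and it remains only to show $A \to D \in X$ --- this will give $D \in X \ogeq Y$ by the definition of $\ogeq$. (Nothing in the argument needs $X \ogeq Y$ to be non-empty, since a derivation already supplies a non-empty finite list of premises.)

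For $A \to D \in X$, note first that each $A_i \to C_i \in X$, so by closure of $X$ under conjunction $(A_1 \to C_1) \land \cdots \land (A_n \to C_n) \in X$; as $X$ is a theory, Lemma \ref{propth} \ref{DT} reduces the task to the purely syntactic claim
\[
\vdash \bigl((A_1 \to C_1) \land \cdots \land (A_n \to C_n)\bigr) \to \bigl((A_1 \land \cdots \land A_n) \to D\bigr).
\]
I would establish this in three moves. (1) Conjunction-elimination $\vdash (A_1 \land \cdots \land A_n) \to A_i$ (from A1, A3 and Lemma \ref{assoccom}) feeds suffixing (R4) to give $\vdash (A_i \to C_i) \to ((A_1 \land \cdots \land A_n) \to C_i)$; selecting the $i$-th conjunct of the antecedent with A3 and composing by transitivity (R4 then R1) yields, for each $i$, $\vdash \bigl((A_1 \to C_1) \land \cdots \land (A_n \to C_n)\bigr) \to \bigl((A_1 \land \cdots \land A_n) \to C_i\bigr)$; adjunction (R2) and iterated A5 then assemble these into $\vdash \bigl((A_1 \to C_1) \land \cdots \land (A_n \to C_n)\bigr) \to \bigl((A_1 \land \cdots \land A_n) \to (C_1 \land \cdots \land C_n)\bigr)$. (2) Prefixing (R3) applied to the given $\vdash (C_1 \land \cdots \land C_n) \to D$ gives $\vdash \bigl((A_1 \land \cdots \land A_n) \to (C_1 \land \cdots \land C_n)\bigr) \to \bigl((A_1 \land \cdots \land A_n) \to D\bigr)$. (3) Transitivity of $\to$ on theorems (R4 then R1) chains (1) and (2) to the displayed formula; hence $A \to D \in X$ by Lemma \ref{propth} \ref{DT}, and $D \in X \ogeq Y$.

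The only genuinely fiddly part is the assembly inside move (1): passing from a conjunction of the $n$ implications $A_i \to C_i$ to the single implication $(A_1 \land \cdots \land A_n) \to (C_1 \land \cdots \land C_n)$. I expect this to be cleanest if isolated beforehand as an auxiliary induction on $n$ --- ``if $\vdash U_1 \to V_1, \dots, \vdash U_n \to V_n$ then $\vdash (U_1 \land \cdots \land U_n) \to (V_1 \land \cdots \land V_n)$'' --- together with the generalised A5-fact $\vdash \bigl((A \to C_1) \land \cdots \land (A \to C_n)\bigr) \to \bigl(A \to (C_1 \land \cdots \land C_n)\bigr)$; both are routine inductions whose steps use only A1, A3, A5, R1, R2, R4 and, for the rebracketing and reordering of conjunctions, Lemma \ref{assoccom}. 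Nothing here is deep, but this is where care with associativity, commutativity and idempotency of $\land$ is needed.
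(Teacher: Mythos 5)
Your proposal is correct, but it is organized differently from the paper's own proof. The paper proceeds in two steps: it first shows that $X \ogeq Y$ is closed under conjunction (given $C, D \in X \ogeq Y$ with witnesses $E, F \in Y$, it uses A3 and R4 to put $(E \land F) \to C$ and $(E \land F) \to D$ in $X$, then A5 to get $(E \land F) \to (C \land D) \in X$, with $E \land F \in Y$ by theoryhood of $Y$); with that in hand, deductive closure becomes a one-witness affair --- if $X \ogeq Y \vdash C$ then some single $E \in X \ogeq Y$ has $\vdash E \to C$, and one application of prefixing (R3) to the witnessing $B \to E \in X$ gives $B \to C \in X$, hence $C \in X \ogeq Y$. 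You instead keep the whole finite premise list $C_1, \dots, C_n$, conjoin the $Y$-side witnesses $A_i$, and discharge everything through the single syntactic claim $\vdash \bigl(\bigwedge_i (A_i \to C_i)\bigr) \to \bigl((\bigwedge_i A_i) \to D\bigr)$, established by the A3/R4/A5/R3 inductions you sketch. That claim is provable exactly as you say, and your appeals to Lemma \ref{propth} are right, so there is no gap; the trade-off is that you re-derive inside this proof essentially the same $n$-ary assembly the paper has already carried out in proving Lemma \ref{propth} \ref{clostheory}, whereas the paper's route confines the conjunction bookkeeping to the binary case and gets the general case from the reduction to a single witness. Your remark on non-emptiness is harmless and matches the paper's silent treatment; in both arguments the same resources (A1, A3, A5, R1--R4, Lemma \ref{assoccom}, theoryhood of $X$ and $Y$) do the work.
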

\begin{proof}
We first show that when $X$ and $Y$ are theories, $X \ogreaterthan Y$ is closed under conjunction.

If $C, D \in X \ogreaterthan Y$ then $(\exists E, F \in Y) [ E \to C \mathrm{~and~}F \to D \in X] $. By A3 and R4 (suffixing), $X \vdash (E \land F) \to C$ and $X \vdash (E \land F) \to D$. As $X$ is a theory, $(E \land F) \to C \in X$ and $(E \land F) \to D \in X$; by A5, $X \vdash (E \land F) \to (C \land D)$; as $X$ is a theory, $(E \land F) \to (C \land D) \in X$. By Lemma \ref{conj}, $E \land F \in Y$. Hence $C \land D \in X \ogreaterthan Y$.

We now show that when $X$ and $Y$ are theories, $X \ogreaterthan Y$ is deductively closed.

If $X \ogreaterthan Y \vdash C$ then, making use of what we have just shown, $(\exists E \in X \ogreaterthan Y) ~\vdash E \to C$. As $E \in X \ogreaterthan Y$,  $(\exists B \in Y) B \to E \in X$. By R3 (prefixing), $\vdash (B \to E) \to (B \to C)$, hence $X \vdash B \to C$ and so $B \to C \in X$. But then $C \in X \ogreaterthan Y$.

By Lemma \ref{propth} \ref{clostheory}, $X \ogreaterthan Y$ is a theory.
\end{proof}

\subsection{Completeness: the canonical model}

Now we are ready to build our canonical model. As usual, the domain $S$ of the \emph{canonical model} comprises all proper, prime theories in $\mathcal{L}$, theoryhood being relative to the logic in play.

\begin{definition}
Let $\mathbb{L}$ be a logic in a language $\mathcal{L} $ axiomatized by the axiom schemata A1-A11 and the rules R1-R6, possibly with some of the axiom schemata A12-A16. We denote by $\vdash_\mathbb{L}$ the corresponding provability relation. We define our canonical frame  $\mathcal{F}_\mathbb{L} = \langle S, L, \leq, R_1,R_2\rangle$ over the domain $S$ of all proper prime theories with the canonical relations defined as follows:

\begin{enumerate}[label=\roman*)]
\item $L = \lbrace X \in S: \lbrace A \in \mathcal{L} :~\vdash_\mathbb{L} A \rbrace \subseteq X \rbrace$.
\item For $X, Y \in S$, $Y \leq Z$ iff $Y \subseteq Z$.
\item For theories $X, Y, Z$, $R_{1}XYZ$ iff $X \ogeq Y \subseteq Z$.
\item For theories $X, Y, Z$, $R_{2}XYZ$ iff $Y \ovee Z \subseteq X$.
\end{enumerate}
\end{definition}

We should check that the canonical frame is indeed a frame, \textit{i.e.}, it satisfies the conditions of Definition \ref{defFrame}.

\begin{lemma}[Canonical frame]\label{Canonicalframelemma}
A canonical frame $\mathcal{F}_\mathbb{L}$ satisfies the conditions  of Definition \ref{defFrame} relevant to the logic $\mathbb{L}$.
\end{lemma}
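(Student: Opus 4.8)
The plan is to verify each of the ten conditions i)--x) of Definition \ref{defFrame} in turn for the canonical frame, working from the definitions of the canonical relations $\leq$, $R_1$, $R_2$, $L$, and drawing on the algebraic facts already established, in particular Lemma \ref{ogeqth} (that $X \ogeq Y$ is a theory when $X,Y$ are), the properties of theories in Lemma \ref{propth}, and Lindenbaum's Lemma together with Corollary \ref{LindCor}.

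First the easy structural conditions. Conditions i) and ii) (reflexivity and transitivity of $\leq$) are immediate since $\leq$ is $\subseteq$ on theories. Condition iii) ($L$ upwards $\leq$-closed) is immediate from the definition of $L$ as those theories containing all theorems: if $X \in L$ and $X \subseteq Z$ then $Z$ contains all theorems too. Conditions iv) and v) (the two monotonicity conditions for $R_1$ and $R_2$ under $\leq$) are straightforward set-theoretic manipulations: for iv), if $W \subseteq X$ and $X \ogeq Y \subseteq Z$, I need $W \ogeq Y \subseteq Z$, which follows because $W \ogeq Y \subseteq X \ogeq Y$ (the defining condition for membership in $W\ogeq Y$ uses $A \to C \in W \subseteq X$); for v), if $X \subseteq W$ and $Y \ovee Z \subseteq X$ then trivially $Y \ovee Z \subseteq W$.

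Next the conditions requiring axioms or rules. Condition vi) ($R_1XXX$, i.e. $X \ogeq X \subseteq X$) uses axiom A12: if $C \in X\ogeq X$ then $A \to C \in X$ and $A \in X$ for some $A$, so $A \land (A\to C) \in X$ by Lemma \ref{propth} \ref{conj}, and since $\vdash (A \land (A \to C)) \to C$ by A12, $C \in X$ by Lemma \ref{propth} \ref{DT}. Condition vii) ($R_2XXX$, i.e. $X \ovee X \subseteq X$) uses axiom A13: a generator $\neg(A \to C)$ of $X \ovee X$ has $A \in X$ and $\neg C \in X$, hence $A \land \neg C \in X$, and $\vdash (A \land \neg C) \to \neg(A \to C)$ by A13 gives $\neg(A \to C) \in X$. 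Conditions viii), ix), x) are the existential ``splitting'' conditions and these I expect to be the main obstacle. For viii), given $R_1XYZ$ (i.e. $X\ogeq Y \subseteq Z$), I must produce a prime theory $W$ with $X \ogeq Y \subseteq W$ and $X \ogeq W \subseteq Z$. The natural candidate is the deductive closure of $X \ogeq Y$ (a theory by Lemmas \ref{ogeqth} and \ref{propth} \ref{clostheory}), but I need it \emph{prime} and still satisfying $X \ogeq W \subseteq Z$, so the real work is to set up a suitable disjunction-closed set disjoint from this closure and apply Corollary \ref{LindCor}; showing disjointness is where axiom A14 (transitivity, in the guise of suffixing/prefixing) enters — roughly, if $X \ogeq W$ met the forbidden set one could chain implications through $W$ back into $X \ogeq Y$ and violate $X \ogeq Y \subseteq Z$. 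Conditions ix) and x) are analogous, producing a $W$ that splits an $R_2$-fact into an $R_1$-fact and an $R_2$-fact; these invoke axioms A15 and A16 respectively (the mixed negated-transitivity schemata), and the same Lindenbaum-extension technique applies, with the bookkeeping about which of $\ogeq$ and $\ovee$ appears where being the delicate part.

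The overall structure is therefore: dispatch i)--vii) by direct computation from the definitions and axioms A12, A13; then for viii)--x) fix the $R$-fact to be split, form the appropriate theory (via Lemma \ref{ogeqth} and closure), identify a disjunction-closed complement set whose disjointness from that theory is forced by the relevant transitivity-style axiom (A14, A15, or A16), extend via Corollary \ref{LindCor} to a proper prime theory $W$, and check $W$ has the two required relational properties. I would also note at the outset that since the lemma only claims the conditions \emph{relevant to $\mathbb{L}$} hold, for each of vi)--x) one assumes the corresponding axiom is among those chosen. The main obstacle, as flagged, is the existential conditions viii)--x): getting primeness of the interpolating theory while simultaneously preserving both inclusion constraints, which is exactly the classical interpolation-lemma difficulty in Routley--Meyer completeness proofs, here complicated by having two interacting ternary relations.
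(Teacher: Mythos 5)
Your treatment of conditions i)--x) is essentially the paper's own proof: the same axiom--condition pairings (A12 for vi, A13 for vii, A14--A16 for viii--x), the same non-prime interpolants built from $\ogeq$, and the same priming technique via a disjunction-closed set and Corollary \ref{LindCor}; your remark that disjointness for viii) is forced by A14 is just a folded-up version of the paper's step establishing $X \ogeq (X \ogeq Y) \ssq X \ogeq Y \ssq Z$ before priming. But there is a genuine gap: Definition \ref{defFrame} does not only list i)--x), it \emph{stipulates} that $x \leq y$ iff $(\exists u \in L)R_{1}uxy$, while the canonical frame defines $\leq$ as $\ssq$. So you must prove, for proper prime theories $X, Y$, that $X \ssq Y$ iff $(\exists U \in L)R_{1}UXY$. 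Your proposal never addresses this, and it is the first and one of the more substantial parts of the paper's proof; without it the canonical structure has not been shown to be a frame at all, and downstream uses are unsupported (e.g.\ that every $X$ has some $U \in L$ with $R_{1}UXX$, which is what drives the Deduction Theorem).

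The right-to-left direction is easy: if $U \in L$ then $A \to A \in U$ for every $A$, so $X \ssq U \ogeq X \ssq Y$. The left-to-right direction is not a remark one can wave at: the natural witness, the theory $t$ of all theorems, does satisfy $t \ogeq X \ssq Y$, but $t$ is not a member of $S$, since it need not be prime. One needs a further Lindenbaum-style argument priming the \emph{first} argument of $R_1$: form $W = \lbrace A : A \vdash_{\mathbb{L}} B \to C$ for some $B \in X$, $C \notin Y \rbrace$, show it is closed under disjunction (using theoryhood of $X$, primeness of $Y$, prefixing/suffixing and A2--A4) and disjoint from $t$, then apply Corollary \ref{LindCor} to get a proper prime $U \supseteq t$ with $U \ogeq X \ssq Y$; since $t \ssq U$, $U \in L$. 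Note that this is a different priming lemma from the one you sketch for viii) (there you extend the second argument, keeping the first fixed). A second, smaller flag: in your ``same technique'' step for ix) and x), the disjunction-closure argument for the forbidden set needs primeness of $X$ (not of $Z$) and runs through A7, A10 and R6, so the bookkeeping is not merely about where $\ogeq$ and $\ovee$ sit but also about which theory's primeness you invoke.
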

\begin{proof}
We must first check that, for all $X$, $Y$ in $S$, $X \ssq Y$ iff $(\exists U \in L)R_{1}UXY$. Suppose, first, that $X \ssq Y$. Let $t = \lbrace C \in \mathcal{L} :~\vdash_\mathbb{L} C \rbrace$. If $A \to B \in t$ and $A \in X$ then $\vdash_\mathbb{L} A \to B$; by Lemma \ref{propth} \ref{DT}, $B \in X$ hence $B \in Y$. Thus $t \ogeq X \ssq Y$, \textit{i.e.}, $R_{1}tXY$.

Now we show that when $X, Y$ and $Z$ are proper theories, $R_{1}XYZ$ and $Z$ is prime, there is a proper, prime theory $X^{\prime}$ such that $X \ssq X^{\prime}$ and $R_{1}X^{\prime}YZ$. To begin, let $W = \lbrace A \in \mathcal{L} :$ for some $B \in Y$, $C \notin Z$, $A \vdash_\mathbb{L} B \to C \rbrace$. As $Z$ is proper, $W$ is non-empty. Let $A, B \in W$, so, for $C, D \in Y$, $E, F \notin Z$, $A \vdash_\mathbb{L} C \to E$ and $B \vdash_\mathbb{L} D \to F$. By prefixing and suffixing and appeals to axioms A2 and A3, $\vdash_\mathbb{L} A \to ((C \land D) \to (E \lor F))$ and $\vdash_\mathbb{L} B \to ((C \land D) \to (E \lor F))$; by $R2$, $A4$ and $R1$, $A \vee B \vdash_\mathbb{L} (C \land D) \to (E \lor F)$. $C \land D \in Y$ as $Y$ is a theory; $E \lor F \notin Z$ as $Z$ is prime. Hence $A \vee B \in W$. --- $W$ is closed under disjunction.

Suppose that $A \in X \cap W$. For some $B \in Y$, $C \notin Z$, $A \vdash_\mathbb{L} B \to C$. As $A \vdash_\mathbb{L} B \to C$, $X \vdash_\mathbb{L} B \to C$. As $X$ is a theory, $B \to C \in X$. But then $C \in Z$ as $R_{1}XYZ$. --- Contradiction. Thus $X \cap W = \emptyset$. By Corollary \ref{LindCor}, there is a prime theory $X^{\prime}$ such that $X \ssq X^{\prime}$ and $X^{\prime}$ is disjoint from $W$. As $W \neq \emptyset$, $X^{\prime}$ is proper. Let $A \to B \in X^{\prime}$. If $A \in Y$ then $A \to B \notin W$, hence $B \in Z$. Thus $X^{\prime}\ogeq Y \ssq Z$ and $R_{1}X^{\prime}YZ$.

And so there's a proper, prime theory $U$ such that $t \ssq U$ and $R_{1}UXY$; as $t \ssq U$, $U \in L$.

To show the converse, \textit{i.e.}, that $X \ssq Y$ if $(\exists U \in L)R_{1}UXY$, we note that if $U \in L$ then, for all $A \in \mathcal{L}$, $A \to A \in U$, hence $A \in U \ogeq X$ and thus $A \in Y$ since $R_{1}UXY$ when $A \in X$.

\begin{enumerate}[label=\roman*)]
\item and ii) (reflexivity and transitivity of $\leq $) are trivial.
\end{enumerate}
\begin{enumerate}[label=\roman*)]
\addtocounter{enumi}{2}
\item That $L$ is upward closed subset of $\langle S, \leq \rangle$ is immediate from the definition.

\item If $W \subseteq X$ then $W \ogeq Y \subseteq X \ogeq Y$, hence $W \ogeq Y \subseteq Z$ when $X \ogeq Y \subseteq Z$.
\item If $X \subseteq W$ then $Y \ovee Z \subseteq W$ when $Y \ovee Z \subseteq X$.


\item If for some $A \in X$, $A \to C \in X$ then, by Lemma \ref{propth} \ref{conj}, $A \land (A \to C) \in X$. According to A10, $\vdash_\mathbb{L} (A \land (A \to C)) \to C$, hence $X \vdash_\mathbb{L} C$; as $X$ is a theory, $C \in X$. Thus $R_1 XXX$ by definition, \emph{given axiom A10}.

\item Suppose that $A \in X$ and $\neg{}C \in X$. By Lemma \ref{propth} \ref{conj}, $A \land \neg{}C \in X$. According to A11, $\vdash_\mathbb{L} (A \land \neg{}C) \to \neg(A \to C)$ hence $X \vdash_\mathbb{L} \neg(A \to C)$; as $X$ is a theory, $\neg(A \to C) \in X$. Thus $R_2 XXX$ by definition, \emph{given axiom A11}.

\item Suppose that $R_{1}XYZ$. Then $R_{1}XY(X\ogeq{}Y)$, as follows from the definition of $R_1$, and $X\ogeq{}Y$ is proper as $Z$ is. Now let $B \in X\ogeq{}Y, B \to C \in X$. Then, for some $A \in Y$, $A \to B \in X$.  By Lemma \ref{propth} \ref{conj}, $(A \to B) \land (B \to C) \in X$. According to A12, $\vdash_\mathbb{L} ((A \to B) \land (B \to C)) \to (A \to C)$ hence $X \vdash_\mathbb{L} A \to C$; as $X$ is a theory, $A \to C \in X$. As $A \in Y$, $C \in X\ogeq{}Y$. Thus $R_{1}X(X\ogeq{}Y)(X\ogeq{}Y)$ by definition. As $R_{1}XYZ$, $X\ogeq{}Y \ssq Z$ and so $R_{1}X(X\ogeq{}Y)Z)$.

We show next that, for any proper theories $X, Y$ and $Z$, if $R_{1}XYZ$ and $Z$ is prime then there is a proper, prime theory $Y^{\prime}$ such that $Y \ssq Y^{\prime}$ and $R_{1}XY^{\prime}Z$. To begin, let $W = \lbrace A \in \mathcal{L} : (\exists B \in \mathcal{L})[ A \to B \in X$ and $B \notin Z]\rbrace$.  As $R_{1}XYZ$, $Y$ and $W$ are disjoint. Let $A, B \in W$, so, for some $C,  D \notin Z$, $A \to C, B \to D \in X$. As $Z$ is prime, $C \vee D \notin Z$. Also, by R3 (prefixing), $A \to (C \vee D), B \to (C \vee D) \in X$; by A3, R4, R2, A4, and R1, $(A \vee B) \to (C \vee D) \in X$, thus $A \vee B \in W$. --- $W$ is closed under disjunction. If $W = \emptyset$, we may take $Y^{\prime}$ to be any proper, prime extension of $Y$; as $W = \emptyset$, $R_{1}XY^{\prime}Z$. If $W \neq \emptyset$, then, by Corollary \ref{LindCor}, there is a (proper) prime theory $Y^{\prime}$ such that $Y \ssq Y^{\prime}$ and $Y^{\prime}$ is disjoint from $W$. Let $A \to B \in X$: if $A \in Y^{\prime}$ then $A \notin W$, hence $B \in Z$; thus $X\ogeq Y^{\prime} \ssq Z$ and $R_{1}XY^{\prime}Z$.\label{A12}

We have shown, \emph{given axiom A12}, that when $R_{1}XYZ$ there is a proper, prime theory $W$ such that $R_{1}XWZ$ and $R_{1}XYW$.

\item Suppose that $\neg(B \to C) \in (X\ogeq{}Y) \ovee Z$.  Then $B \in X\ogeq{}Y$ and $\neg{}C \in Z$. As $B \in X\ogeq{}Y$, for some $A \in Y$, $A \to B \in X$ and $\neg(A \to C) \in Y \ovee Z$.  As $R_{2}XYZ$, $\neg(A \to C) \in X$. By Lemma \ref{propth} \ref{conj}, $(A \to B) \land \neg(A \to C) \in X$. According to A13, $\vdash ((A \to B) \land \neg(A \to C)) \to \neg(B \to C)$ hence $X \vdash \neg(B \to C)$; as $X$ is a theory, $\neg(B \to C) \in X$. Thus $(X\ogeq{}Y) \ovee Z \subseteq X$, \textit{i.e.}, $R_{2}X(X\ogeq{}Y)Z$.

We show next that, for any proper theories $X, Y$ and $Z$, if $R_{2}XYZ$ and $X$ is prime, there is a proper, prime theory $Y^{\prime}$ such that $Y \ssq Y^{\prime}$ and $R_{2}XY^{\prime}Z$. To begin, let $W = \lbrace A \in \mathcal{L} : (\exists B \in \mathcal{L})[\neg{B} \in Z \mathrm{~and~} \neg(A \to B) \notin X]\rbrace$.   As $R_{2}XYZ$, $Y$ and $W$ are disjoint. Let $A, C \in W$, so, for some $\neg{B} \in Z$, $\neg(A \to B) \notin X$ and, for some $\neg{D} \in Z$, $\neg(C \to D) \notin X$. $\neg{}B \land \neg{}D \in Z$, as $Z$ is a theory; by A7, $\neg(B \lor D) \in Z$. As $\vdash \neg(B \lor D) \to \neg{}B$, $\vdash \neg(A \to (B \lor D)) \to \neg(A \to B)$ by R6. Hence $\neg(A \to (B \lor D)) \notin X$. Likewise $\neg(C \to (B \lor D)) \notin X$. As $X$ is prime, $\neg(A \to (B \lor D)) \lor \neg(C \to (B \lor D)) \notin X$. By axiom A10, $\vdash \neg((A \lor C) \to (B \lor D)) \to (\neg(A \to (B \lor D)) \lor \neg(C \to (B \lor D)))$, hence $\neg((A \lor C) \to (B \lor D))  \notin X$. Thus $A \lor C \in W$. --- $W$ is closed under disjunction. If $W = \emptyset$, we may take $Y^{\prime}$ to be any proper, prime extension of $Y$. As $W = \emptyset$, $R_{2}XY^{\prime}Z$. If $W \neq \emptyset$, then, by Corollary \ref{LindCor}, there is a prime theory $Y^{\prime}$ such that $Y \ssq Y^{\prime}$ and $Y^{\prime} \cap W = \emptyset$.  If $A \in Y^{\prime}$ then $A \notin W$, hence, for all $\neg{}B \in Z$, $\neg(A \to B) \in X$. Thus $Y^{\prime} \ovee Z \ssq X$ and so $R_{2}XY^{\prime}Z$.\label{A13}

We have shown, \emph{given axiom A13}, that when $R_{2}XYZ$ there is a proper, prime theory $W$ such that $R_{1}XYW$ and $R_{2}XWZ$.

\item Suppose that $R_{2}XYZ$. Let $A \in Y$, $\neg{}B \in X \ogeq Z$. So, for some $C \in Z, C \to \neg{}B \in X$. By A9 and R3, $\neg\neg{}C \to \neg{}B \in X$ and, by A9, $\neg\neg{}C \in Z$ (as $Z$ is a theory). As $R_{2}XYZ$, $\neg(A \to \neg{}C) \in X$. By Lemma \ref{propth} \ref{conj}, $(\neg\neg{}C \to \neg{}B) \land \neg(A \to \neg{}C) \in X$. According to A14, $\vdash ((\neg\neg{}C \to \neg{}B) \land \neg(A \to \neg{}C)) \to \neg(A \to B)$, hence $\neg(A \to B) \in X$. Thus $Y \ovee (X \ogeq Z) \ssq X$, \textit{i.e.}, $R_{2}XY(X \ogeq Z)$.

We show next that, for any proper theories $X, Y$ and $Z$, if $R_{2}XYZ$ and $X$ is prime, there is a proper, prime theory $Y^{\prime}$ such that $Y \ssq Y^{\prime}$ and $R_{2}XY^{\prime}Z$. To begin, let $W = \lbrace A \in \mathcal{L} : (\exists B \in \mathcal{L})[\neg{B} \in Z \mathrm{~and~} \neg(A \to B) \notin X]\rbrace$.   As $R_{2}XYZ$, $Y$ and $W$ are disjoint. Let $A, C \in W$, so, for some $\neg{B} \in Z$, $\neg(A \to B) \notin X$ and, for some $\neg{D} \in Z$, $\neg(C \to D) \notin X$. $\neg{}B \land \neg{}D \in Z$, as $Z$ is a theory; by A7, $\neg(B \lor D) \in Z$. As $\vdash \neg(B \lor D) \to \neg{}B$, $\vdash \neg(A \to (B \lor D)) \to \neg(A \to B)$ by R6. Hence $\neg(A \to (B \lor D)) \notin X$. Likewise $\neg(C \to (B \lor D)) \notin X$. As $X$ is prime, $\neg(A \to (B \lor D)) \lor \neg(C \to (B \lor D)) \notin X$. By axiom A10, $\vdash \neg((A \lor C) \to (B \lor D)) \to (\neg(A \to (B \lor D)) \lor \neg(C \to (B \lor D)))$, hence $\neg((A \lor C) \to (B \lor D))  \notin X$. Thus $A \lor C \in W$. --- $W$ is closed under disjunction. If $W = \emptyset$, we may take $Y^{\prime}$ to be any proper, prime extension of $Y$. As $W = \emptyset$, $R_{2}XY^{\prime}Z$. If $W \neq \emptyset$, then, by Corollary \ref{LindCor}, there is a prime theory $Y^{\prime}$ such that $Y \ssq Y^{\prime}$ and $Y^{\prime} \cap W = \emptyset$.  If $A \in Y^{\prime}$ then $A \notin W$, hence, for all $\neg{}B \in Z$, $\neg(A \to B) \in X$. Thus $Y^{\prime} \ovee Z \ssq X$ and so $R_{2}XY^{\prime}Z$.\label{A14} 

We have shown, \emph{given axiom A14}, that when $R_{2}XYZ$ there is a proper, prime theory $W$ such that $R_{2}XYW$ and $R_{1}XZW$.

\end{enumerate}
\end{proof}

Adding the canonical valuation defined for atomic $p$ in $\mathcal{L}$ as
\begin{center}
$T \in v(X, p)$ iff $p \in X$,\\
$F \in v(X, p)$ iff $\neg{}p \in X$
\end{center}
we obtain the canonical LScD model $\mathcal{M}$. So defined, $v$ automatically satisfies the pair of constraints
\begin{enumerate}[label=\roman*)]
\item if $X \leq Y$ and $T \in v(X, p)$ then $T \in v(Y, p)$;
\item if $X \leq Y$ and $F \in v(X, p)$ then $F \in v(Y, p)$.
\end{enumerate}
So $\langle \mathscr{F}, v \rangle$ is a model in the sense of Definition \ref{defModel}. 

\begin{lemma}[Valuation lemma] Given a canonical model $\mathcal{M_C} = \langle \mathscr{F_C}, v \rangle$, for all $X \in S$ and $A \in \mathcal{L}$:\\

\indent $T \in v_{\mathcal{M}}(X, A)$ iff $A \in X$ and\\ 
\indent $F \in v_{\mathcal{M}}(X, A)$ iff $\neg{}A \in X$.
\end{lemma}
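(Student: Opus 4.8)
The plan is the standard truth/valuation lemma argument: induction on the complexity of $A$, proving the two biconditionals $T \in v_{\mathcal{M}}(X, A)$ iff $A \in X$ and $F \in v_{\mathcal{M}}(X, A)$ iff $\neg A \in X$ simultaneously, for every proper prime theory $X \in S$. The base case, $A$ atomic, is exactly the definition of the canonical valuation. For the inductive step I would take the four connectives in turn, in each case unfolding the relevant clause of Definition \ref{defSatisfaction}, invoking the induction hypothesis on the immediate subformulas, and then closing the gap with the theory-theoretic facts already in place. The three Dunn--Belnap connectives are routine: for $\neg B$ the $T$-clause is immediate from the hypothesis, and the $F$-clause reduces to $B \in X$ iff $\neg\neg B \in X$, which follows from A9 together with $X$ being a theory (Lemma \ref{propth}); for $B \land C$ and $B \lor C$, the $T$-clause for $\land$ and the $F$-clause for $\lor$ use closure of theories under conjunction (Lemma \ref{propth}) with A3, A2; the $T$-clause for $\lor$ is just primeness of $X$ with A2; the $F$-clause for $\land$ uses A8 with primeness; the $F$-clause for $\lor$ uses A7 with closure under conjunction.

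\textbf{The conditional: the easy directions.} The conditional case carries the weight, and two of its four directions are easy and use the induction hypothesis only cosmetically. If $B \to C \in X$ and $R_1 XYZ$, i.e. $X \ogeq Y \ssq Z$, with $B \in Y$, then $C \in X \ogeq Y \ssq Z$ by the definition of $\ogeq$, which gives the $T$-clause from right to left. If $R_2 XYZ$, i.e. $Y \ovee Z \ssq X$, with $B \in Y$ and $\neg C \in Z$, then $\neg(B \to C) \in Y \ovee Z \ssq X$, which gives the $F$-clause from right to left. The substance is in the two converses, each of which calls for manufacturing a witnessing pair of proper prime theories standing in the right relation.

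\textbf{The conditional: the $T$-converse.} Suppose $B \to C \notin X$. I would let $[B]$ be the deductive closure of $\{B\}$, a theory (Lemma \ref{propth}) containing $B$, and observe that $C \notin X \ogeq [B]$: otherwise $A' \to C \in X$ for some $A'$ with $\vdash B \to A'$, whence $\vdash (A' \to C) \to (B \to C)$ by suffixing (R4) and so $B \to C \in X$, a contradiction. Since $X \ogeq [B]$ is a theory (Lemma \ref{ogeqth}) not containing $C$, Lindenbaum's Lemma yields a proper prime $Z \supseteq X \ogeq [B]$ with $C \notin Z$, so $R_1 X [B] Z$; the extension argument in the proof of Lemma \ref{Canonicalframelemma} (from $R_1 XYZ$ with $Z$ prime one obtains a proper prime $Y' \supseteq Y$ with $R_1 X Y' Z$) then promotes $[B]$ to a proper prime $Y \ni B$ with $R_1 XYZ$. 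By the induction hypothesis $T \in v_{\mathcal{M}}(Y, B)$ while $T \notin v_{\mathcal{M}}(Z, C)$, hence $T \notin v_{\mathcal{M}}(X, B \to C)$.

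\textbf{The conditional: the $F$-converse, and the main obstacle.} Suppose $\neg(B \to C) \in X$. Here I would start from $R_2 X [B] [\neg C]$, which holds because whenever $A' \in [B]$ (so $\vdash B \to A'$) and $\neg D \in [\neg C]$ (so $\vdash \neg C \to \neg D$), the negated suffixing and negated prefixing rules R5 and R6 carry $\neg(B \to C) \in X$ to $\neg(A' \to D) \in X$. I would then enlarge $[\neg C]$ to a proper prime $Z$ keeping $R_2 X [B] Z$: the key point is that $[\neg C] \nvdash G$ where $G = \{\neg D : \neg(B \to D) \notin X\}$, since if $\vdash \neg C \to (\neg D_1 \lor \dots \lor \neg D_k)$ with every $\neg(B \to D_i) \notin X$, then De~Morgan (A8) and R6 give $\neg(B \to (D_1 \land \dots \land D_k)) \in X$, and A11 with primeness of $X$ forces some $\neg(B \to D_i) \in X$, a contradiction; Lindenbaum's Lemma then supplies such a $Z$, and the $R_2$-extension argument in the proof of Lemma \ref{Canonicalframelemma} promotes $[B]$ to a proper prime $Y \ni B$ with $R_2 XYZ$. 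By the induction hypothesis $T \in v_{\mathcal{M}}(Y, B)$ and $F \in v_{\mathcal{M}}(Z, C)$, hence $F \in v_{\mathcal{M}}(X, B \to C)$. The hard part is exactly this pair of converse constructions for $\to$, and the $F$-converse is the more delicate: because $\ovee$ and $R_2$ are asymmetric (a reflection of the failure of contraposition) one cannot simply read off $Z$ from a ready-made extension lemma but must run the Lindenbaum argument around $G$. A handful of degenerate cases --- $X \ogeq [B]$ or $G$ empty, or $[\neg C]$ improper --- need a separate word, but all reduce to the observation that any single formula can be kept out of some proper prime theory.
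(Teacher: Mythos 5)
Your proposal is correct and takes essentially the same route as the paper's own proof: the same induction, the same easy directions read off from the definitions of $\ogreaterthan$ and $\ovee$, and the same hard directions built from the deductive closures $[B]$, $X \ogreaterthan [B]$ and $[\neg C]$, Lindenbaum's Lemma, and the prime-extension arguments of Lemma \ref{Canonicalframelemma} (your explicit $G$-set argument for extending the third coordinate of $R_{2}$ is just the extension step the paper delegates to that lemma, here usefully spelled out). The only blemish is a labelling slip: the ``easy'' $F$-direction you describe (witnesses in hand, conclude $\neg(B \to C) \in X$) is the left-to-right half of the $F$-clause, not right-to-left, though your treatment of the genuinely hard converse makes clear you have the directions straight.
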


In the inductive proof that $T \in v(X, A)$ iff $A \in X$, we skip the easy cases and attend only to conditionals and their negations. We have:
\begin{itemize}
\item If $A \to B \in X$ then, for any $Y, Z \in S$  such that $R_{1}XYZ$, if $A \in Y$ then $B \in Z$. By the induction hypothesis this means that $T \in v(Y, A)$ and $T \in v(Z,B)$, hence $T \in v(X, A \to B)$.

\item Suppose that $A \to B \notin X$. Let $Y = \lbrace B \in \mathcal{L} : A \vdash_{\mathbb{L}} B \rbrace$; let $Z = X \ogeq Y$.  By lemmata \ref{propth} \ref{clostheory} and \ref{ogeqth}, $Y$ and $Z$ are theories. Obviously, $A \in Y$. If $B \in Z = X \ogeq Y$, then $A \to B \in X$; as this is contrary to hypothesis, $B \notin Z$.  By Lindenbaum's Lemma, there is a proper, prime theory $Z^{\prime}$ such that $Z \subseteq  Z^{\prime}$ and $B \notin Z^{\prime}$ so $Z^{\prime}$ is proper. 

$X$ and $Z^{\prime}$ are proper, prime theories. As $R_{1}XYZ$, $R_{1}XYZ^{\prime}$ and so, as in the proof of Lemma \ref{Canonicalframelemma} \ref{A12}, there is a proper, prime theory $Y^{\prime}$ extending $Y$ such $R_{1}XY^{\prime}Z^{\prime}$, where $A \in Y^{\prime}$, $B \notin Z^{\prime}$. By the induction hypothesis, $T \in v(Y^{\prime}, A)$ and $T \notin v(Z^{\prime},B)$, hence $T \notin v(X, A \to B)$.

\medskip


\item Let $\neg(A \to B) \in X$. 
Let $Y = \lbrace C \in \mathcal{L} : A \vdash_{\mathbb{L}} C \rbrace$ and $Z = \lbrace C \in \mathcal{L} : \neg{}B \vdash_{\mathbb{L}} C \rbrace$. By Lemmata \ref{propth} \ref{clostheory}, $Y$ and $Z$ are theories. Let $C \in Y$ and $\neg{}D \in Z$. As $\vdash A \to C$, $\neg(C \to B) \in X$, by R5 (negated suffixing); as $\vdash \neg{}B \to \neg{}D$, $\neg(C \to D) \in X$, by R6 (negated prefixing). Thus $Y \ovee Z \ssq X$, \textit{i.e.}, $R_{2}XYZ$. As in the proof of Lemma \ref{Canonicalframelemma} \ref{A13} and \ref{A14}, there are \emph{proper, prime} theories $Y^\prime$ and $Z^\prime$ such that $Y \ssq Y^\prime$, $Z \ssq Z^\prime$ and $R_{2}XY^\prime{}Z^\prime$, where $A \in Y^{\prime}$, $\neg{}B \in Z^{\prime}$. By the IH we have $R_{2}XY^\prime{}Z^\prime$ and $T \in v(Y^\prime, A)$ and $F \in v(Z^\prime, B)$, hence $T \in v(X, \neg(A \to B))$.

\item if $\neg(A \to B) \notin X$ then, if $R_{2}XYZ$, $\neg(A \to B) \notin Y \ovee Z$ so $A \notin Y$ or $\neg{}B \notin Z$. By the IH we have $(\forall Y, Z \in S)[$if $R_{2}XYZ$ and $T \in v(Y, A)$ then $T \notin v(Z,\neg{}B)]$, \textit{i.e.}, it is not the case that $(\exists Y, Z \in S)[R_{2}XYZ$ and $T \in v(Y, A)$ and $F \in v(Z,B)]$. Thus $T \notin v(X, \neg(A \to B))$.
\end{itemize}

\noindent This completes the proof. If $X \nvdash_\mathbb{L} A$, then the canonical model is a counter-model: $X \not\models_{\mathcal{M_C}} A$, \textit{i.e.}, there is a node $Y \in S_C$ such that $T \in v_{\mathcal{M_C}}(Y, B)$ for each $B \in X$ and $T \notin v_{\mathcal{M_C}}(Y, A)$.

\subsection{Concluding remarks on the logic(s)}
Some comments are now in order about the interpretation of our logics of scientific discovery. While they all share the same rules, the minimal \textbf{LScD} has axioms A1-A11 only. Other logics will be justified by their capturing how laboratories share data (and not by logical convenience). For example, while axiom A12 would be logically convenient, it also corresponds to the strong requirement that the $R_1$ accessibility relation be reflexive, $R_1xxx$. Thus for every lab there would be at least one test on which it undertakes all the work: intiating the test, completing the test, collating the results of the tests, and evaluating them.

It is also worth noting that none of the \textbf{LScD} logics have contraposition, not even in rule form. That is,
\begin{center}
$A \rightarrow \neg{}B / B \rightarrow \neg{}A$
\end{center}
is not sound. The following model $\mathcal{M} = \langle \mathscr{F}, v \rangle$ demonstrates this: we set $S = \lbrace u, x, y \rbrace$, $L = \lbrace u \rbrace$, $R_1 = \lbrace \langle u, u, u \rangle, \langle u, x, x \rangle, \langle u, x, y \rangle, \langle u, y, y \rangle, \langle x, x, x \rangle,$ $\langle x, x, y \rangle, \langle x, y, y \rangle, \langle y, y, y \rangle \rbrace$, $R_2 = \lbrace \langle u, u, u \rangle, \langle x, x, x \rangle, \langle y, y, y \rangle, \langle y, x, x \rangle \rbrace$;  $v_u(A) = v_x(A)$ $=  v_y(A) =  \emptyset$; $v_u(B) = v_x(B) = v_y(B) =  \lbrace T, F  \rbrace$. Taking $z \leq w$ to obtain when $(\exists{}u \in L)R_{1}uzw$, we have $\leq{} = \lbrace \langle u,u\rangle,  \langle x, x \rangle, \langle x, y \rangle, \langle y, y\rangle \rbrace$. We then set $\mathscr{F} = \langle S, L, \leq{}, R_1, R_2 \rangle$. This model satisfies all of conditions \textit{i}) -- \textit{x}) in Definition \ref{defFrame}. Moreover, $T \in v_u(A \rightarrow \neg{}B)$, $T \notin v_u(B \rightarrow \neg{}A)$.

We can motivate the failure of contraposition with the hoary example of Eddington's expedition, set up to determine whether its position appears to shift when a star's light passes near a massive object. It's a different matter to test whether a star's position does not appear to shift when its light does not pass near a massive object. Even sillier: if you pet your cat it will purr -- this is easily tested. But checking whether your cat's not purring when you're not petting it is not so simple. The former can be checked from your couch, the latter may require significant mobility and stealth. The absence of contraposition goes to explain the difference between our approach and Routley's, a number of constraints he places on the second accessibility relation being designed to deliver contraposition. Likewise Oshini places constraints on the second accessibility relation with the aim of providing semantics for a co-implication connective. The failure of contraposition in our system is mitigated to some extent by the holding of the rules for negated prefixing and negated suffixing (R5 and R6). It's failure in general, though, makes comparison with extant systems of relevance logic difficult.  


The \textbf{LScD} logics are very flexible: but scientific practice requires even more flexibility. We have operated on the assumption that lab reports are unambiguous, even when reporting their ambiguity. In the following part we remove this assumption by incorporating probabilities into our framework. 
\newpage

\begin{center}
\LARGE

Part II: Probabilities
\end{center}

\section{Introduction}
In Part I we developed Logics of Scientific Discovery that describe certain aspects of scientific practice. In this part we take note of another aspect of scientific practice: lab resorts almost always involve probabilities. We define probabilities -- or, more accurately, appropriately generalized probability functions --  with our Logic(s) of Scientific Discovery, not classical logic, as the underlying logic. We provide relative frequency and betting quotient interpretations.

We begin with probabilities at the level of the individual laboratory, i.e. probabilities for propositions in the $\rightarrow$-free fragment of our logics (Dunn-Belnap logic). Later in this part we give an analysis of the interaction of probabilities over networks of laboratories, so defining probabilities over the full vocabulary. We are led to develop analogues of Bayesian conditionalization, Jeffrey conditionalization and Adams conditioning. Probabilities of conditionals are dealt with in the last section.

\section{Probabilities in the Laboratory}\label{plab}

We begin with a generalized version of the Kolmogorov axioms.

\begin{definition}[Probabilities]\label{Probabilities}
A probability space is a pair $\langle \mathcal{L}, p \rangle$, where $\mathcal{L}$ is the set of all $\to$-free formulas generated by the set $At(\mathcal{L})$ of atomic formulas in $\mathcal{L}$ (see \S{}\ref{LLR}), $\vDash$ is the relation of logical consequence specified in \ref{defConsequence}, and $p$ is a function from $\mathcal{L}$ into the real numbers satisfying:
\begin{enumerate}[label=\roman*)]
\item\label{probax1} for all $A \in \mathcal{L}$, $0 \le p(A) \le 1$,
\item\label{probax2} for all $A, B \in \mathcal{L}$, if $A \vDash B$ then $p(A) \le p(B)$,
\item\label{probax3} for all $A, B \in \mathcal{L}$, $p(A \wedge B) + p(A \vee B) = p(A) + p(B)$,
\item\label{probax4}for all $A, B \in \mathcal{L}$, if $p(B) > 0$ then $p(A|B) = \dfrac{p(A \land B)}{p(B)}$.
\end{enumerate}
\end{definition}

\noindent As they stand, the axioms admit a trivialising interpretation: a function which uniformly assigns the value $0$ to all members of $\mathcal{L}$, leaving $p(A|B)$ undefined for all pairs $A, B$. We could exclude it by adding this principle as a further axiom:
\begin{center}
for some $A \in \mathcal{L}$, $0 < p(A)$.
\end{center}

Axiom \ref{Probabilities} \ref{probax3} is written to account for the non-Boolean structure of the language.\footnote{\textit{Cf.} \cite[pp.\ 107--108]{Priest2006}. For related analyses see, \textit{e.g.}, \cite{Mares1997,Mares2006,Zhou2013}.} As we employ Dunn-Belnap four-valued logic, negation does not determine partitions; as negation and partitions come apart, we are no longer guaranteed that $p(A \wedge \neg A) = 0$. Indeed the usual statement of the additivity axiom for propositions\footnote{By which we mean:
if $\vDash \neg{}(A \land B)$ then $p(A \lor B) = p(A) + p(B)$.} is devoid of application. We replace it with what is in classical probability theory an easily derived consequence.



Suppose we hold it possible that $A$ be both (reported) true and (reported) false. Then we may assign a non-zero probability to $A \land \neg{}A$ and thus, from Axiom \ref{Probabilities} \ref{probax3}, it follows that $p(A) + p(\neg{}A) > p(A \lor \neg{}A)$. Suppose, next, that we hold it possible that $A$ be neither (reported) true nor (reported) false. In close analogy to the previous case, taking the \emph{uncertainty} $u(B)$ assigned a proposition $B$ to be $1 - p(B)$, we should assign a non-zero uncertainty to $A \lor \neg{}A$ and thus, from Axiom \ref{Probabilities} \ref{probax3}, we find  that $u(A) + u(\neg{}A) > u(A \land \neg{}A)$.\footnote{We take this notion of uncertainty from \cite{Adams1975,Edgington1992}.}

Axiom \ref{Probabilities} \ref{probax2}, too, is, in the classical setting, derived from the same additivity axiom and the constraint---{}not sound in our setting!\ but often, classically, adopted as an axiom---{}that $p(A) + p(\neg{}A) = 1$.\footnote{Classically, if $A \vDash_{CL} B$ then $\vDash_{CL} \neg(A \land \neg{}B)$, and hence, from \ref{probax1}, respected classically, and the (classical) additivity axiom, we have that $1 \geq p(A \vee \neg{}B) = p(A) + p(\neg{}B) = p(A) + (1 - p(B))$ whence $p(A) \leq p(B)$.}

From Axiom \ref{Probabilities} \ref{probax4} we find, thanks to the resources of Dunn--Belnap logic, that when $p(B) > 0$, $p(.|B)$ satisfies Definition \ref{Probabilities} \ref{probax1} -- \ref{probax3} with the upper bound in Axiom \ref{Probabilities} \ref{probax1} attained by $p(B|B)$; moreover, when $p(C|B) > 0$, $\dfrac{p(A \wedge C|B)}{p(C|B)} = p(A|B \wedge C)$. 

We now turn to providing relative frequency and betting quotient interpretations of the axioms.


\subsection{Relative Frequencies}\label{RelativeFrequencies}
The orthodox relative frequency interpretation is readily adapted to our framework, the only necessary modification needed being separate definitions of the frequency of a proposition and its negation. With outcomes of the $n$\textsuperscript{th} trial as stipulated, we have:

\begin{definition}[Relative Frequencies\label{freq}]
\begin{equation}
    freq_n(A) = 
    \begin{cases}
      freq_{n-1}(A) + 1 & \text{if}\ T \in v(A), \\
      freq_{n-1}(A) & \text{if}\ T \notin v(A).
    \end{cases}
  \end{equation}

\begin{equation}
    freq_n(\neg{}A) = 
    \begin{cases}
      freq_{n-1}(\neg{}A) & \text{if}\ F \notin v(A), \\
      freq_{n-1}(\neg{}A) + 1 & \text{if}\ F \in v(A).
    \end{cases}
  \end{equation}
\end{definition}

\noindent Relative frequency is $rfreq_n(A) = \frac{freq_n(A)}{n}$. 

\begin{lemma}[\textit{rfreq} is a probability]
rfreq satisfies the axioms of Definition \ref{Probabilities}.
\end{lemma}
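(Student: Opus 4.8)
The plan is to verify each of the four probability axioms from Definition \ref{Probabilities} in turn for the function $rfreq_n$, for fixed $n$. The key observation underpinning all four is that, in the $\to$-free fragment, the evaluation of a formula at a node (here, each trial's outcome) is a four-valued assignment $v$ governed by the Belnap clauses \ref{not}--\ref{or}, so membership of $T$ (and, separately, of $F$) in $v(A)$ is determined truth-functionally from the atomic assignments. In particular, on any single trial, $T \in v(A \land B)$ iff $T \in v(A)$ and $T \in v(B)$, and $T \in v(A \lor B)$ iff $T \in v(A)$ or $T \in v(B)$; these are the facts that drive axioms \ref{probax3} and \ref{probax2}.

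First, for axiom \ref{probax1}: by induction on $n$, $0 \le freq_n(A) \le n$ since each step adds $0$ or $1$, so $0 \le rfreq_n(A) \le 1$. Second, for axiom \ref{probax3}: I would argue that on each individual trial the indicator quantities satisfy $[\![T \in v(A \land B)]\!] + [\![T \in v(A \lor B)]\!] = [\![T \in v(A)]\!] + [\![T \in v(B)]\!]$ — this is just the set-theoretic identity $|X \cap Y| + |X \cup Y| = |X| + |Y|$ applied to the one-element index set, using the Belnap clauses for $\land$ and $\lor$ on $T$-membership. Summing over the $n$ trials gives $freq_n(A \land B) + freq_n(A \lor B) = freq_n(A) + freq_n(B)$, and dividing by $n$ yields \ref{probax3}. (One must note that $freq_n$ on a conjunction or disjunction is computed via the first clause of Definition \ref{freq}, i.e. $T$-membership, and that this is consistent because Definition \ref{defSatisfaction} makes $v$ a genuine valuation on complex formulas.) Third, for axiom \ref{probax2}: if $A \vDash B$ then, by Definition \ref{defConsequence}, at every node $T \in v(A)$ implies $T \in v(B)$; hence on every trial the increment to $freq_n(A)$ is at most the increment to $freq_n(B)$, so $freq_n(A) \le freq_n(B)$ and therefore $rfreq_n(A) \le rfreq_n(B)$. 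Fourth, axiom \ref{probax4} is purely definitional: when $rfreq_n(B) > 0$ we simply \emph{define} the conditional relative frequency by $rfreq_n(A \mid B) = rfreq_n(A \land B)/rfreq_n(B)$, matching the axiom's stipulation, so there is nothing to check beyond well-definedness (the denominator is nonzero by hypothesis).

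The main subtlety — and the step I would be most careful about — is axiom \ref{probax3}, specifically making sure the bookkeeping in Definition \ref{freq} is applied to the \emph{compound} formulas $A \land B$ and $A \lor B$ using $T$-membership, and that this agrees with the truth-functional Belnap clauses; the negation clauses of Definition \ref{freq} (counting $F \in v(A)$) are only needed when the compound formula is itself a negation, and there one uses $F \in v(\neg C)$ iff $T \in v(C)$ together with $\neg(A \lor B) \equiv \neg A \land \neg B$ etc., so the same additivity identity transfers. Everything else is routine induction on the trial count $n$ and direct appeal to the definitions, with no appeal to the ternary-relation machinery needed since we are confined to the $\to$-free fragment.
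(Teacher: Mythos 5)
Your proposal is correct and follows essentially the same route as the paper's own (much terser) proof: axiom \ref{probax1} by bounding the counts, axiom \ref{probax2} from Definition \ref{defConsequence} giving per-trial monotonicity of the increments, axiom \ref{probax3} from the per-trial inclusion--exclusion identity summed over trials (the paper compresses this to ``by induction''), and axiom \ref{probax4} by reading the conditional relative frequency as the frequency of $A \wedge B$ among trials where $B$ takes value $T$. Your added care about applying the $T$-membership clause of Definition \ref{freq} to compound formulas and its consistency with the Belnap clauses is a legitimate filling-in of detail, not a departure.
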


\begin{proof}
Axiom \ref{Probabilities} \ref{probax1}: Obvious.\\
Axiom \ref{Probabilities} \ref{probax2}: By Definition \ref{defConsequence}, if $A \vDash B$, then whenever $T \in v(A), T \in v(B)$ and hence by Definition \ref{freq}, for any $n$, $freq_n(B) \ge freq_n(A)$.\\
Axiom \ref{Probabilities} \ref{probax3}: By induction, for any $n$, $freq_n(A \vee B) + freq_n(A \wedge B) = freq_n(A) + freq_n(B)$.\\
Axiom \ref{Probabilities} \ref{probax4}: The conditional probability $p(A|B)$ is the relative frequency of $A$ restricted to trials in which $B$ is the outcome, that is, $\frac{freq_n(A \wedge B)}{freq_n(B)}$, i.e. $\frac{rfreq_n(A \wedge B)}{rfreq_n(B)}$, assuming that $B$ has occurred, i.e., $freq_n(B) > 0$.
\end{proof}


As it stands, this is a finite frequency interpretation. A limiting relative frequency interpretation can easily be constructed.

\subsection{Betting Quotients}

The betting quotient interpretation of probability is also readily adapted to our framework.

\begin{definition}[Bet]
A \emph{bet} on (proposition) $A$ with (positive or negative) \emph{stake} $S$ at \emph{betting quotient} $p$ pays $(1 - p)S$ to the bettor if $A$ takes a designated value ($\lbrace T \rbrace, \lbrace T, F \rbrace$) and pays $pS$ to the bookmaker if it doesn't.
\end{definition}

\begin{definition}[Conditional bet]
A \emph{(conditional) bet} on (proposition) $B$ conditional on (proposition) $A$ with (positive or negative) \emph{stake} $S$ at \emph{betting quotient} $p$ pays nothing to either bettor or bookmaker if $A$ does not take a designated value and otherwise pays $(1 - p)S$ to the bettor if $B$ takes a designated value and pays $pS$ to the bookmaker if it doesn't.
\end{definition}

\noindent Notice that in these definitions the amounts paid to bettor and to bookmaker may be negative; equivalently, stakes are always positive but the roles of bettor and bookmaker

\begin{definition}[Dutch book]
A bettor faces a \em ph{Dutch book} on a family of bets if, given the chosen betting quotients and stakes, she faces certain loss, \textit{i.e.}, on all assignments of sets of truth-values to atomic propositions, the bettor suffers a net loss (which is paid to the bookmaker).
\end{definition}

\begin{thm}[Dutch Book Argument]\label{dutchbookargument}
A bettor may face a Dutch Book on a finite family of bets, through an unfortunate choice of stakes, if her betting quotients do not satisfy Axioms \ref{probax1} -- \ref{probax4}.
\end{thm}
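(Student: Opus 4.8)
The plan is to establish the contrapositive: if the betting quotients do satisfy Axioms \ref{probax1}--\ref{probax4}, then no finite family of bets can yield a certain net loss; hence if a Dutch book is possible the axioms must fail. In fact we argue directly that violation of each axiom exposes the bettor to a sure loss for a suitable choice of stakes, constructing the offending family of bets axiom by axiom. The underlying computational fact is that for a bet on $A$ with stake $S$ at quotient $q$, the bettor's net gain is $(1-q)S$ when $A$ takes a designated value and $-qS$ otherwise; writing $\llbracket A \rrbracket$ for the indicator of ``$A$ designated'' under a given assignment of truth-value sets to atoms, the gain is $S(\llbracket A \rrbracket - q)$. For a conditional bet on $B$ given $A$ it is $S\llbracket A \rrbracket(\llbracket B \rrbracket - q)$. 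Summing over a finite family, the total gain is an affine function of the $\llbracket \cdot \rrbracket$-values, and ``facing a Dutch book'' means this affine function is strictly negative on every assignment.

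First I would handle Axiom \ref{probax1}. If $p(A) < 0$, a single bet on $A$ with a negative stake $S$ gives the bettor gain $S(\llbracket A\rrbracket - p(A))$; since $\llbracket A\rrbracket \ge 0 > p(A)$, the factor $\llbracket A\rrbracket - p(A)$ is strictly positive on every assignment, so with $S<0$ the gain is uniformly negative. If $p(A) > 1$, take $S>0$: then $\llbracket A\rrbracket - p(A) \le 1 - p(A) < 0$, again a sure loss. Next, for Axiom \ref{probax2}: suppose $A \vDash B$ but $p(A) > p(B)$. By Definition \ref{defConsequence}, at any assignment, if $T \in v(A)$ then $T \in v(B)$; one checks from the Hasse diagram / the clauses of Definition \ref{defSatisfaction} that $A$ designated implies $B$ designated (the designated values are exactly those containing $T$). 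Hence $\llbracket A \rrbracket \le \llbracket B \rrbracket$ always. Place stake $S>0$ on $A$ and stake $-S$ on $B$: total gain is $S(\llbracket A\rrbracket - p(A)) - S(\llbracket B\rrbracket - p(B)) = S(\llbracket A\rrbracket - \llbracket B\rrbracket) - S(p(A) - p(B)) \le 0 - S(p(A)-p(B)) < 0$, a sure loss.

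For Axiom \ref{probax3}: suppose $p(A\wedge B) + p(A \vee B) \neq p(A) + p(B)$, say the left side exceeds the right by $\varepsilon > 0$ (the other direction is symmetric, flipping all stake signs). The key semantic fact, from the conjunction and disjunction clauses of Definition \ref{defSatisfaction}, is that for every assignment $\llbracket A\wedge B\rrbracket + \llbracket A\vee B\rrbracket = \llbracket A\rrbracket + \llbracket B\rrbracket$: indeed $A \wedge B$ is designated iff both $A$ and $B$ are ($T$ in both), and $A \vee B$ is designated iff at least one is, so the two indicators on the left sum, pointwise, to the two on the right. Now bet with stake $S>0$ on each of $A\wedge B$ and $A\vee B$ and stake $-S$ on each of $A$ and $B$; the total gain is $S(\llbracket A\wedge B\rrbracket + \llbracket A\vee B\rrbracket - \llbracket A\rrbracket - \llbracket B\rrbracket) - S(p(A\wedge B)+p(A\vee B) - p(A) - p(B)) = 0 - S\varepsilon < 0$, a uniform loss.

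Finally, Axiom \ref{probax4}: suppose $p(B) > 0$ but $p(A|B) \neq p(A\wedge B)/p(B)$, i.e. $p(A|B)p(B) \neq p(A\wedge B)$, with difference $\varepsilon \neq 0$. Consider a conditional bet on $A$ given $B$ with stake $S_1$, an unconditional bet on $B$ with stake $S_2$, and an unconditional bet on $A \wedge B$ with stake $S_3$. On an assignment, write $b = \llbracket B\rrbracket$, $c = \llbracket A\wedge B\rrbracket$; note $c = b\llbracket A\rrbracket$ as above, so $c \le b$ and $c = 0$ whenever $b = 0$. The total gain is $S_1 b(\llbracket A\rrbracket - p(A|B)) + S_2(b - p(B)) + S_3(c - p(A\wedge B))$. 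Using $b\llbracket A\rrbracket = c$, the term $S_1 b\llbracket A\rrbracket = S_1 c$. Choosing $S_3 = -S_1$ cancels the $c$-coefficient, leaving $-S_1 b\, p(A|B) + S_2 b - S_2 p(B) + S_1 p(A\wedge B)$, which as a function of $b \in \{0,1\}$ is affine; picking $S_2 = S_1 p(A|B)$ kills the $b$-coefficient entirely, leaving the constant $-S_1 p(A|B) p(B) + S_1 p(A\wedge B) = S_1(p(A\wedge B) - p(A|B)p(B)) = S_1\varepsilon$, independent of the assignment. Taking $S_1$ of the sign opposite to $\varepsilon$ makes this a sure loss.

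I expect the main obstacle to be the bookkeeping in the Axiom \ref{probax4} case: unlike the linear-combination tricks for \ref{probax1}--\ref{probax3}, here one must exploit that the conditional bet is void exactly when $B$ is undesignated, and arrange three stakes so that the net payoff collapses to a nonzero constant across all four truth-value assignments to the relevant atoms. The semantic lemmas needed throughout — that the designated values are the $T$-containing ones, that ``designated'' commutes appropriately with $\wedge$ and $\vee$, and that consequence preserves designation — are all immediate from Definitions \ref{defSatisfaction} and \ref{defConsequence} and the Hasse diagram, so they will be stated briefly rather than belabored.
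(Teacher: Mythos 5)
Your proposal is correct and takes essentially the paper's own route: for each violated axiom you build the same family of bets with (up to sign and scale) the same stakes the paper exhibits --- e.g.\ stakes $S,S,-S,-S$ on $A\wedge B$, $A\vee B$, $A$, $B$ for Axiom iii, and stakes $S_1$, $S_1p(A|B)$, $-S_1$ on the conditional bet, on $B$, and on $A\wedge B$ for Axiom iv --- merely streamlining the verification through the pointwise identities $\llbracket A\wedge B\rrbracket+\llbracket A\vee B\rrbracket=\llbracket A\rrbracket+\llbracket B\rrbracket$ and $\llbracket A\wedge B\rrbracket=\llbracket A\rrbracket\llbracket B\rrbracket$ where the paper enumerates the gain vectors $G_i$ case by case (the paper additionally proves ``Dutch book iff violation'' biconditionals, which the statement does not require). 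One small slip: your opening sentence announces a ``contrapositive'' strategy (axioms $\Rightarrow$ no sure loss), which is in fact the Converse Dutch Book Theorem rather than this one, but the argument you actually carry out is the correct direct construction, so nothing is affected.
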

\begin{proof}
\textbf{Axiom \ref{Probabilities} \ref{probax1}.} Firstly, $-pS_{1}$ and $(1 - p)S_{1}$ are both negative if, and only if, either (\textit{i}) $S_{1} < 0$ and $p < 0$ or (\textit{ii}) $S_{1} > 0$ and $p > 1$. ---{} Our bettor faces a Dutch Book on a single bet if, and only if, $p < 0$ or $p > 1$.

Now consider bets on $A$ at betting quotient $p$ with stake $S_{1}$ and at betting quotient $q$ with stake $S_{2}$. From the immediately preceding, the bettor immediately faces a Dutch book if any of these are the case: $p < 0$, $p > 1$, $q < 0$, $q > 1$ so we suppose that $0 \leq p \leq 1$ and that $0 \leq q \leq 1$.

Let
\begin{align*}
G_{1} &= -pS_{1} - qS_{2} & \qquad\qquad\qquad &  T \notin v(A) \\
G_{2} &= (1 - p)S_{1} + (1 - q)S_{2} & \qquad\qquad\qquad &  T \in v(A).
\end{align*}
If $G_{1} < 0$ and $G_{2} < 0$, $(1 - p)G_{1} + pG_{2} < 0$, hence
\[ (p - q)S_{2} < 0.\]
Hence $p \neq q$.

Now, supposing that $p \neq q$, choose $S$ of the same sign as $p - q$ and set $S_{1} = S$, $S_{2} = -S$. We then have
\begin{align*}
G_{1} &= -pS + qS = (q - p)S < 0\\
G_{2} &= (1 - p)S - (1 - q)S = (q - p)S < 0.
\end{align*}
Granted that $0 \leq p \leq 1$ and that $0 \leq q \leq 1$, our bettor faces a Dutch Book on the pair of bets if, and only if, $p \neq q$.

\paragraph{Axiom \ref{Probabilities} \ref{probax2}.} There are two cases to consider. Firstly if, in addition to $A \vDash B$, $B \vDash A$, there are just two possibilities when we consider a pair of bets on $A$ and $B$ at betting quotients $p$ and $q$ and stakes $S_{1}$ and $S_{2}$, respectively: neither $A$ nor $B$ takes a designated value or both do.

From above, the bettor immediately faces a Dutch book if any of these are the case: $p < 0$, $p > 1$, $q < 0$, $q > 1$ so we suppose that $0 \leq p \leq 1$ and that $0 \leq q \leq 1$. Algebraically, the argument now proceeds exactly as above for there are just these two cases to consider:

\begin{align*}
G_{1} &= -pS_{1} - qS_{2} & \qquad\qquad\qquad &  T \notin v(A), T \notin v(B) \\
G_{2} &= (1 - p)S_{1} + (1 - q)S_{2} & \qquad\qquad\qquad &  T \in v(A), T \in v(B).
\end{align*}
Consequently, granted that $0 \leq p \leq 1$ and that $0 \leq q \leq 1$, our bettor faces a Dutch Book on the pair of bets if, and only if, $p \neq q$.

The second case: $A \vDash B$ but $B \nvDash A$. There are three possibilities when we consider a pair of bets on $A$ and $B$ at betting quotients $p$ and $q$ and stakes $S_{1}$ and $S_{2}$, respectively: neither $A$ nor $B$ takes a designated value, $B$ takes a designated value but $A$ does not, both take a designated value. As before, the bettor immediately faces a Dutch book if any of these are the case: $p < 0$, $p > 1$, $q < 0$, $q > 1$ so we suppose that $0 \leq p \leq 1$ and that $0 \leq q \leq 1$.

Let
\begin{align*}
G_{1} &= -pS_{1} - qS_{2} & \qquad\qquad\qquad & T \notin v(A), T \notin v(B) \\
G_{2} &= -pS_{1} + (1 - q)S_{2} & \qquad\qquad\qquad &  T \notin v(A), T \in v(B) \\
G_{3} &= (1 - p)S_{1} + (1 - q)S_{2} & \qquad\qquad\qquad & T \in v(A), T \in v(B) .
\end{align*}
If $G_{1} < 0$, $G_{2} < 0$ and $G_{3} < 0$, $(1 - q)G_{1} + qG_{2} < 0$ and $(1 - p)G_{2} + pG_{3} < 0$, hence
\[ -pS_{1} < 0 \textrm{~and~} (1 - q)S_{2} < 0.\]
From this we see that $S_{1} > 0$ and $S_{2} < 0$. Now, $(1 - q)G_{1} + qG_{2} < 0$, hence $(q - p)S_{1} < 0$. And so $p > q$.

Now, supposing that $p > q$, choose $S_{1} > 0$ and set $S_{2} = -S_{1}$. We find that
\begin{align*}
G_{1} &= -pS_{1} - qS_{2} = (q - p)S_{1} < 0\\
G_{2} &= -pS_{1} + (1 - q)S_{2} = -pS_{1} - (1 - q)S_{1} < 0\\ 
G_{3} &= (1 - p)S_{1} + (1 - q)S_{2} = (q - p)S_{1} < 0.
\end{align*}
Granted that $0 \leq p \leq 1$ and that $0 \leq q \leq 1$, our bettor faces a Dutch Book on the triple of bets if, and only if, $p > q$.

\paragraph{Axiom \ref{Probabilities} \ref{probax3}.} Four bets are to be made: on $A$ at betting quotient $p$ and stake $S_{1}$, on $B$ at betting quotient $q$ and stake $S_{2}$, on $A \wedge B$ at betting quotient $r$ and stake $S_{3}$ and on $A \vee B$ at betting quotient $s$ and stake $S_{4}$. As $A \wedge B \vDash A$, $A \wedge B \vDash B$, $A \vDash A \vee B$ and $B \vDash A \vee B$, from above the bettor immediately faces a Dutch book if any of these are the case: $p < 0$, $p > 1$, $q < 0$, $q > 1$, $r < 0$, $r > 1$, $s < 0$, $s > 1$, $r > p$, $r > q$, $p > s$, $q > s$ so we suppose that $0 \leq r \leq p \leq s \leq 1$ and that $0 \leq r \leq q \leq s \leq 1$.

There are two special cases to consider. Firstly, if $A \vDash B$ then $A \vDash A \wedge B$ and $A \vee B \vDash B$, hence, if the the bettor is not immediately to face a Dutch book, $p = r$ and $q = s$, whence $p + q = r + s$. Secondly, if $B \vDash A$ then $B \vDash A \wedge B$ and $A \vee B \vDash A$, hence, if the the bettor is not immediately to face a Dutch book, $p = s$ and $q = r$, whence $p + q = r + s$.

(If $A \vDash B$ and $B \vDash A$ then, if the the bettor is not immediately to face a Dutch book, $p = q =  r = s$, whence $p + q = r + s$.)

We now suppose that $A \nvDash B$ and $B \nvDash A$.

Let
\begin{align*}
G_{1} &= -pS_{1} - qS_{2} - rS_{3} - sS_{4} & \qquad &  T \notin v(A), T \notin v(B) \\
G_{2} &= -pS_{1} +( 1 - q)S_{2}  - rS_{3} + (1 - s)S_{4} & \qquad &  T \notin v(A), T \in v(B) \\
G_{3} &= (1 - p)S_{1} - qS_{2}  - rS_{3} + (1 - s)S_{4} & \qquad &  T \in v(A), T \notin v(B) \\
G_{4} &= (1 - p)S_{1} + (1 - q)S_{2} + (1 - r)S_{3} + (1 - s)S_{4} & \qquad &  T \in v(A), T \in v(B) .
\end{align*}

If $G_{1} < 0$, $G_{2} < 0$, $G_{3} < 0$ and $G_{4} < 0$ then $G_{12} = (1 - s)G_{1} + sG_{2} < 0$,   $G_{13} = (1 - s)G_{2} + sG_{3} < 0$, $G_{14} = (1 - s)G_{1} + sG_{4} < 0$, \textit{i.e.},
\begin{align*}
G_{12} &= -pS_{1} + (s - q)S_{2} - rS_{3} < 0\\
G_{13} &= (s - p)S_{1} - qS_{2} - rS_{3} < 0\\
G_{14} &= (s - p)S_{1} + (s - q)S_{2} + (s - r)S_{3} < 0.
\end{align*}
And so, if $G_{1} < 0$, $G_{2} < 0$, $G_{3} < 0$ and $G_{4} < 0$ then $s \neq 0$, hence $s > p$ or $p > 0$ and so $G_{1213} = (s - p)G_{12} + pG_{13} < 0$ and $G_{1214} = (s - p)G_{12} + pG_{14} < 0$, \textit{i.e.},
\begin{align*}
G_{1213} &= s[(s - p - q)S_{2} - rS_{3}] < 0\\
G_{1214} &= s[(s - q)S_{2} + (p - r)S_{3}] < 0.
\end{align*}
If $G_{1} < 0$, $G_{2} < 0$, $G_{3} < 0$ and $G_{4} < 0$ then there are two cases to consider. Firstly, $p > r$ or $r > 0$ in which case $(p - r)[(s - p - q)S_{2} - rS_{3}] + r[(s - q)S_{2} + (p - r)S_{3}] < 0$, \textit{i.e.},
\[ p[(r + s) - (p + q) ]S_{2} < 0. \]
Secondly, $p = r = 0$ in which case $(s - q)S_{2} < 0$.  Either way, $p + q \neq r + s$.

Now, supposing that $p + q \neq r + s$, we choose $S$ to be of the same sign as $(p + q) - (r + s)$ and set $S_{1} = S_{2} = -S_{3} = -S_{4} = S$. We find that
\begin{align*}
G_{1} &= -pS - qS + rS + sS = -[(p + q) - (r + s)]S < 0\\
G_{2} &= -pS + (1 - q)S  + rS - (1 - s)S = -[(p + q) - (r + s)]S < 0\\ 
G_{3} &= (1 - p)S - qS  + rS - (1 - s)S = -[(p + q) - (r + s)]S < 0\\
G_{4} &= (1 - p)S + (1 - q)S - (1 - r)S - (1 - s)S = -[(p + q) - (r + s)]S < 0.
\end{align*}
Granted that $0 \leq r \leq p \leq s \leq 1$ and that $0 \leq r \leq q \leq s \leq 1$, our bettor faces a Dutch Book on the family of four bets  if, and only if, $p + q \neq r + s$.

\paragraph{Axiom \ref{Probabilities} \ref{probax4}.} Three bets are to be made: on $B$ at betting quotient $p$ and stake $S_{1}$, on $A \wedge B$ at betting quotient $q$ and stake $S_{2}$, and on $A$ conditional on $B$ at betting quotient $r$ and stake $S_{3}$. As $A \wedge B \vDash B$, from above the bettor immediately faces a Dutch book if any of these are the case: $p < 0$, $p > 1$, $q < 0$, $q > 1$, $q > p$. We suppose that $0 \leq q \leq p \leq 1$.

Let
\begin{align*}
G_{1} &= -pS_{1} - qS_{2} & \qquad &  T \notin v(B) \\
G_{2} &=  (1-p)S_1 - qS_2 - rS_3& \qquad &  T \notin v(A), T \in v(B) \\
G_{3} &= (1 - p)S_{1} + (1 - q)S_{2} + (1 - r)S_{3} & \qquad & T \in v(A), T \in v(B).
\end{align*}

If $G_{2} < 0$ and $G_{3} < 0$ then $G_{23} = (1 - r)G_{2} + rG_{3} < 0$, \textit{i.e.}, $(1 - p)S_{1} + (r - q)S_{2} < 0$. If, in addition, $G_{1} < 0$, then $pG_{23} + (1 - p)G_{1} < 0$, \textit{i.e.}, $(pr - q)S_{2} < 0$. And so $pr \neq q$.

Now, supposing that $pr \neq q$, choose $S$ to be of the same sign as $pr - q$, set $S_{1} = rS$, $S_{2} = -S$, $S_{3} = S$ and we find that
\begin{align*}
G_{1} &= -prS + qS = -(pr - q)S < 0\\
G_{2} &= (1 - p)rS + qS - rS = -(pr - q)S < 0 \\
G_{3} &= (1 - p)rS - (1 - q)S + (1 - r)S = -(pr - q)S < 0.
\end{align*}

Granted that $0 \leq q \leq p \leq 1$, our bettor faces a Dutch Book on the triple of bets if, and only if, $pr \neq q$.
\end{proof}

\begin{thm}[Converse Dutch Book Argument]\label{conversedba}
A bettor cannot, through an unfortunate choice of stakes, face a Dutch Book on a finite family of bets if her betting quotients satisfy Axioms \ref{Probabilities} \ref{probax1} -- \ref{probax4}.
\end{thm}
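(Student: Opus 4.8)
The plan is to argue by contraposition in the usual way, but exploiting the finite semantics of Dunn--Belnap logic to make the linear-programming/separating-hyperplane argument entirely elementary. First I would set up the bookkeeping. Suppose the bettor's betting quotients satisfy Axioms \ref{probax1}--\ref{probax4}, and suppose she has placed a finite family of bets on propositions $A_1, \dots, A_k$ (some possibly conditional bets, on $B_i$ given $C_i$) with quotients $p_1,\dots,p_k$ and stakes $S_1,\dots,S_k$. Because only finitely many atomic propositions occur in $A_1,\dots,A_k$, there are only finitely many relevant ``states'' --- assignments of elements of $\mathscr{P}(\lbrace T,F\rbrace)$ to those atoms --- and each such state $\sigma$ determines, via the Dunn--Belnap truth tables, whether each $A_i$ takes a designated value; write $\chi_i(\sigma) = 1$ if it does and $0$ if not, and for a conditional bet write $\chi_i(\sigma)$ for the indicator that the condition is designated. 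The bettor's net gain in state $\sigma$ is $G(\sigma) = \sum_i S_i\bigl(\chi_i^{\mathrm{cond}}(\sigma)\bigr)\bigl(\chi_i(\sigma) - p_i\bigr)$ with the understanding that conditional bets contribute $0$ when the condition fails. A Dutch book is a choice of stakes making $G(\sigma) < 0$ for every state $\sigma$; we must show no such choice exists.

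The key step is to produce a single probability-like weighting of the states that certifies no Dutch book. Concretely, I would show that the constraints imposed by Axioms \ref{probax1}--\ref{probax4} force the vector $(p_1,\dots,p_k)$ to lie in the convex hull of the ``truth-value vectors'' $(\chi_1(\sigma),\dots,\chi_k(\sigma))$ indexed by states $\sigma$ --- or, for the conditional bets, a suitably normalized version thereof. Granting that, write $p_i = \sum_\sigma \lambda_\sigma \chi_i(\sigma)$ with $\lambda_\sigma \ge 0$, $\sum_\sigma \lambda_\sigma = 1$. Then $\sum_\sigma \lambda_\sigma G(\sigma) = \sum_i S_i\bigl(\sum_\sigma \lambda_\sigma \chi_i(\sigma) - p_i\bigr) = 0$ (handling the conditional bets by restricting and renormalizing the $\lambda_\sigma$ over states where $C_i$ is designated, which is exactly what Axiom \ref{probax4} delivers). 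A convex combination of the $G(\sigma)$ equalling $0$ is incompatible with every $G(\sigma)$ being strictly negative, so no Dutch book is possible. For conditional bets one must be slightly more careful: if $p(C_i) = 0$ the bet is vacuous on all states receiving positive weight and contributes nothing, so it may be ignored; otherwise the ratio definition in Axiom \ref{probax4} is precisely what is needed to express $r_i$ (the conditional quotient) as a convex combination over the relevant sub-family of states.

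The main obstacle --- and the place where real work is needed --- is establishing that the axioms genuinely force $(p_1, \dots, p_k)$ into the relevant convex hull. In the classical case this is the standard fact that a finitely additive probability on a finite Boolean algebra is a convex combination of $\lbrace 0,1\rbrace$-valued (atomic) measures; here the algebra is replaced by the Dunn--Belnap value space, so I would instead argue directly: extend $p$ (or rather its restriction to the finite subalgebra generated by $A_1,\dots,A_k$) to the finitely many ``generators'' corresponding to states, using Axiom \ref{probax3} (the inclusion--exclusion identity $p(A\wedge B) + p(A\vee B) = p(A)+p(B)$) repeatedly to decompose $p$ of an arbitrary formula into a signed sum over states, then use Axiom \ref{probax2} (monotonicity under $\vDash$) together with Axiom \ref{probax1} to show all these state-weights are nonnegative and sum to $1$. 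The delicate point is that, unlike the Boolean setting, the ``state indicators'' may not be mutually exclusive propositions in the object language --- a proposition can be both $T$ and $F$ --- so one must set up the right finite index set of semantic states (four-valued assignments) and verify that every formula's truth-value profile across states is an affine function of the $\lambda_\sigma$'s; this is where the symmetry/structure of the Hasse diagram noted earlier in the paper is used. Once that combinatorial lemma is in hand, the rest of the argument is the short convexity observation above, and I would, if space permits, alternatively sketch the purely algorithmic route --- mirroring the explicit $G_i$ computations in Theorem \ref{dutchbookargument} --- whereby, case by case on the logical relationships among the bet propositions, one verifies directly that any stake vector making all $G(\sigma)$ negative leads to a contradiction with the inequalities $0 \le p_i \le 1$ and the monotonicity/additivity relations among the quotients.
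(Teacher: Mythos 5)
Your plan is essentially the paper's own argument: the paper defines a classical probability $P$ on the algebra of metalinguistic events $T \in v(A)$ by setting $P(T \in v(A)) = q(A)$ (which is exactly your weighting $\lambda_\sigma$ over four-valued states placing the quotient vector in the convex hull of the state indicator vectors), shows every bet and conditional bet has zero expectation under $P$, and concludes that the total gain cannot be negative in every state. The representation step you rightly flag as the real work is also the step the paper treats only briefly (via the consistency check and the disjunctive-normal-form remark), so your proposal matches both the strategy and its load-bearing lemma.
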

\begin{proof} 
Given the language $\mathcal{L}$, let the betting quotients $q(A), A \in \mathcal{L}$ satisfy Axioms \ref{Probabilities} \ref{probax1} -- \ref{probax4}.

A classical probability distribution satisfies these axioms (\textit{cf.} \cite{Paris2001}):
\begin{enumerate}[label=\roman*)]
\item $p$ is a real-valued function such that for all $A \in \mathcal{L}, 0 \le p(A) \le 1$,
\item for all $A, B \in \mathcal{L}$, if $A \vDash B$ then $p(A) \le p(B)$,
\item for all $A, B \in \mathcal{L}, p(A \wedge B) + p(A \vee B) = p(A) + p(B)$,
\item for all $A, B \in \mathcal{L}$, if $p(B) > 0$ then $p(A|B) = \dfrac{p(A \land B)}{p(B)}$,
\end{enumerate}
\begin{enumerate}[label=c\roman*)]
\setcounter{enumi}{4}
\item\label{probaxc1} for all $A \in \mathcal{L}$, if $\vDash A$ then $P(A) = 1$,
\item\label{probaxc2} for all $A \in \mathcal{L}$, if $\vDash \neg{}A$ then $P(A) = 0$.
\end{enumerate}
Here `$\vDash$' stands for classical consequence. In fact in the application we are about to make of this, we can happily strengthen it to mean classical consequence given the semantic account of Dunn--Belnap logic in \S{}\ref{LLR}. We'll indicate this by `$\vDash_{ST}$'. (As is common practice, our meta-language is classical.)

Define a function $P$ on the algebra generated by the (classical) meta\-{}linguistic propositions $T \in v(A), A \in \mathcal{L}$ by setting $P(T \in v(A)) = q(A)$. As follows from the axioms above, $P(T \notin v(A)) = 1 - q(A)$ for all $A \in \mathcal{L}$.

We need to show that in making this assignment there is no conflict between the axioms governing $P$ and the axioms governing $q$. This we do as follows:
\begin{enumerate}[label=\roman*)]
\item As $0  \leq q(A) \leq 1$, $0  \leq P(T \in v(A)) \leq 1$.
\item For all $A, B \in \mathcal{L}$, $T \in v(A) \vDash_{ST} T \in v(B)$ iff, for all valuations $v$, $T \in v(B)$ if $T \in v(A)$ iff $A \vDash B$.
\item For all $A, B \in \mathcal{L}$, $P(T \in v(A) ~\textrm{and}~ T \in v(B)) + P(T \in v(A) ~\textrm{or}~ T \in v(B)) = P(T \in v(A \wedge B)) + P(T \in v(A \vee B)) = q(A \wedge B) + q(A \vee B) = q(A) + q(B) = P(T \in v(A)) + P(T \in v(B))$.
\item For all $A, B \in \mathcal{L}$, if $P(T \in v(B)) > 0$, equivalently, if $q(B) > 0$, then $P(T \in v(A)|T \in v(B)) = \dfrac{P(T \in v(A) ~\textrm{and}~ T \in v(B))}{P(T \in v(B))} = \dfrac{P(T \in v(A \wedge B))}{P(T \in v(B))} = \dfrac{q(A \wedge B)}{q(B)} = q(A|B)$.
\end{enumerate}
Notice too that, say, $P(T \in v(A) ~\textrm{and}~ T \notin v(B)) = P(T \in v(A)) - P(T \in v(A) ~\textrm{and}~ T \in v(B)) = q(A) - q(A \wedge B)$ and $P(T \notin v(A) ~\textrm{and}~ T \notin v(B)) = 1 - P(T \in v(A) ~\textrm{or}~ T \in v(B)) = 1 - q(A \vee B)$. Just as in classical logic, Dunn--Belnap logic has DeMorgan's Laws, Laws of distributivity of `$\land$' over `$\lor$' and \textit{vice versa}, and Double Negation equivalence, so any formula can be expressed in disjunctive normal form as a disjunction of conjunctions of literals. What we rely on here is the pseudo-classical behaviour of $\wedge$ and $\vee$ which arises from their satisfying what, above, we called truisms about truth.


The expected value, relative to $P$, of a bet on $A$ at betting quotient $p$ with stake $S$ is
\[ -P(T \notin v(A))pS + P(T \in v(A))(1 - p)S  = \left( P(T \in v(A)) - p \right) S. \]
Clearly, this is zero if, and only if, $P(T \in v(A)) = p$. And so, when $P$ is set up as above, \textit{i.e.} by setting $P(T \in v(A)) = q(A)$, the expectation is zero for a bet on $A$ at betting quotient $q(A)$, no matter the size and sign of the stake.

The expected value, relative to $P$, of a (conditional) bet on $A$ conditional on $B$ at betting quotient $p$ with stake $S$ is\\

$ P(T \notin v(B))\cdot 0 - P(T \notin v(A)$ $T \in v(B)) pS$ \\

~~~~~~~~~ $+~~ P(T \in v(A)$ and $T \in v(B))(1 - p)S$ \\

~~~~~~~~~ $= [ P(T \in v(A)$ and $T \in v(B)) - pP(T \in v(B)) ]S.$\\

And clearly this is zero just if $p \times P(T \in v(B)) = P(T \in v(A) ~\textrm{and}~ T \in v(B))$, that is, just if $p \times P(T \in v(B)) = P(T \in v(A \wedge B))$. And so, when $P$ is set up by setting $P(T \in v(A)) = q(A)$, and so on for the members of $\mathcal{L}$, the expectation is zero for a (conditional) bet on $A$ conditional on $B$ at betting quotient $q(A|B)$, no matter the size and sign of the stake.

Consider a family of $n$ bets on the propositions $A_{1}$, $A_{2}$, \ldots, $A_{n}$ at betting quotients $q(A_{1})$, $q(A_{2})$, \ldots, $q(A_{n})$ and stakes $S_{1}$, $S_{2}$, \ldots $S_{n}$, respectively. Given any assignment of truth-values---{}$\emptyset$, $\lbrace T \rbrace$, $\lbrace F \rbrace$, $\lbrace T, F \rbrace$ ---{} to literals, we can work out the gain/loss on each bet. The net gain/loss is the sum of the gains/losses on the $n$ bets. Consequently we can work out the expected net gain/loss relative to $P$. But classically the expected value of a sum is the sum of the expected values of the summands and, as we have seen, for each bet this is zero (including conditional bets). The expected value is negative if each possible value is negative, as the net gains/losses would be if sure loss was faced. Hence sure loss is not faced: the bettor does not face a Dutch book.
\end{proof}

\subsubsection{Reversed bets}
\begin{definition}[Reversed bet]
A \emph{reversed bet} on (proposition) $A$ with (positive or negative) \emph{stake} $S$ at \emph{betting quotient} $p$ pays $pS$ to the bookmaker if $A$ takes a designated value and pays $(1 - p)S$ to the bettor if it doesn't.
\end{definition}

\begin{definition}[Reversed Conditional bet]
A \emph{reversed (conditional) bet} on (proposition) $B$ conditional on (proposition) $A$ with (positive or negative) \emph{stake} $S$ at \emph{betting quotient} $p$ pays nothing to either bettor or bookmaker if $A$ does not take a designated value and otherwise pays $pS$ to the bookmaker if $B$ takes a designated value and pays $(1 - p)S$ to the bettor if it doesn't.
\end{definition}

Letting $u(A)$ stand for the betting quotient for a reversed bet, we can run analogues of the Dutch book arguments above, switching $-x$ and $1 - x$, for $x = p, q, r, s$, in the characterization of pay-offs, to find that
\begin{thm}[Dutch Book Argument for reversed bets]
A bettor may face a Dutch Book on a finite family of reversed bets, through an unfortunate choice of stakes, if her betting quotients do not satisfy these axioms:
\begin{enumerate}[label=\roman*)]
\setcounter{enumi}{4}
\item\label{probax12} $u$ is a real-valued function such that $\mbox{for all}~A \in \mathcal{L}, 0 \le u(A) \le 1$,
\item\label{probax22} $\mbox{for all}~A, B \in \mathcal{L}$, if $A \vDash B$ then $u(B) \le u(A)$,
\item\label{probax32} $\mbox{for all}~A, B \in \mathcal{L}, u(A \wedge B) + u(A \vee B) = u(A) + u(B)$,
\item\label{probax42} $\mbox{for all}~A, B \in \mathcal{L}$, if $u(B) < 1$ then $u(A|B) = 1 - \dfrac{1 - u(A \land B)}{1 - u(B)}$.
\end{enumerate}
\end{thm}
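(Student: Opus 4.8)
The plan is to derive this from the ordinary Dutch Book Argument already established as Theorem~\ref{dutchbookargument}, by the change of variables signalled in the paragraph preceding the statement. First I would verify the pay-off dictionary between reversed and ordinary bets: a reversed bet on $A$ with stake $S$ at betting quotient $p$ gives the bettor gain $-pS$ when $A$ takes a designated value and $(1-p)S$ when it does not, and this is exactly the gain profile of an ordinary bet on $A$ with stake $-S$ at betting quotient $1-p$ (since then the bettor's gain is $(1-(1-p))(-S)=-pS$ on the designated branch and $-(1-p)(-S)=(1-p)S$ off it). Running the same arithmetic branch-by-branch on the condition $A$, one checks likewise that a reversed conditional bet on $B$ given $A$ with stake $S$ at quotient $p$ coincides, assignment by assignment, with the ordinary conditional bet on $B$ given $A$ with stake $-S$ at quotient $1-p$; in particular the ``pays nothing'' branch is fixed by the swap.

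Given this, I would pass from a finite family of reversed bets with betting quotients $u(A_1),\dots,u(A_n)$ and stakes $S_1,\dots,S_n$ to the finite family of ordinary bets with betting quotients $1-u(A_1),\dots,1-u(A_n)$ and stakes $-S_1,\dots,-S_n$. Because each reversed bet and its ordinary counterpart deliver the same gain on every assignment of truth-values to the atomic propositions, the net gain/loss of the two families agrees on every assignment, so the bettor suffers certain net loss on the reversed family for some choice of the $S_i$ precisely when she does so on the ordinary family for the choice $-S_i$. Now set $p(A):=1-u(A)$ for all $A\in\mathcal{L}$: under this substitution the stated axioms \ref{probax12}--\ref{probax42} on $u$ become, line by line, Axioms \ref{probax1}--\ref{probax4} of Definition~\ref{Probabilities} on $p$ --- $0\le u(A)\le 1$ is $0\le p(A)\le 1$; $A\vDash B\Rightarrow u(B)\le u(A)$ is $A\vDash B\Rightarrow p(A)\le p(B)$; $u(A\wedge B)+u(A\vee B)=u(A)+u(B)$ is $p(A\wedge B)+p(A\vee B)=p(A)+p(B)$; and $u(B)<1$ is $p(B)>0$, while $u(A|B)=1-\frac{1-u(A\wedge B)}{1-u(B)}$ rearranges, via $1-u(A|B)=p(A|B)$, to $p(A|B)=\frac{p(A\wedge B)}{p(B)}$.

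Hence, if $u$ violates one of \ref{probax12}--\ref{probax42}, then $p=1-u$ violates the corresponding one of Axioms \ref{probax1}--\ref{probax4}, and Theorem~\ref{dutchbookargument} furnishes an unfortunate choice of ordinary stakes that produces a Dutch book against $p$; negating those stakes yields a Dutch book on the original family of reversed bets, which is what the statement asserts. I do not anticipate any real obstacle here: the only points requiring care are the bookkeeping for the conditional bet --- confirming that the null branch is untouched and that the side condition transforms as $u(B)<1 \leftrightarrow p(B)>0$ rather than $u(B)>0$ --- and noting that, because the correspondence is an exact equality of pay-offs on every assignment (not merely in expectation), it transfers the ``certain loss'' property in both directions.
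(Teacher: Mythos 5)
Your proof is correct, and it gets to the conclusion by a different route than the paper does. The paper's justification is simply the instruction to re-run the four case analyses of Theorem~\ref{dutchbookargument} with the pay-off terms $-x$ and $1-x$ interchanged (for $x = p, q, r, s$), producing the sure-loss stake choices directly for reversed bets; you instead avoid any repetition by exhibiting an exact, assignment-by-assignment pay-off dictionary (reversed bet on $A$ at quotient $p$, stake $S$) $=$ (ordinary bet on $A$ at quotient $1-p$, stake $-S$), likewise for conditional bets, and then transferring Theorem~\ref{dutchbookargument} wholesale via the substitution $p(\cdot) = 1 - u(\cdot)$, under which the four axioms on $u$ become precisely Axioms i)--iv) of Definition~\ref{Probabilities}, with the side condition $u(B) < 1$ becoming $p(B) > 0$. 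Your verification of the dictionary is accurate (including the null branch of the conditional bet and the sign flip on stakes, which the paper's ``switch $-x$ and $1-x$'' remark leaves implicit), and since the correspondence preserves pay-offs on every assignment, not merely in expectation, certain loss transfers in both directions as you say. What the paper's re-derivation buys is a self-contained statement of which stake choices enforce the loss for reversed bets; what your reduction buys is economy and, as a bonus, it makes transparent in advance why reversed bets are redundant --- the very point the paper only draws out afterwards, via the constraint $u(A) = 1 - p(A)$ in Corollary~\ref{combineddba}.
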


From the part of the proof of the Dutch Book Argument concerning Axiom \ref{Probabilities} \ref{probax1}, we see that in order to avoid a Dutch book on a bet on $A$ at betting quotient $p$, $0 \leq p \leq 1$, and a reversed bet on $A$ at betting quotient $q$, $0 \leq q \leq 1$, we must set $q = 1 - p$. 

Combining bets and reversed bets, including conditional bets, we have this Dutch Book Argument:
\begin{cor}[Combined Dutch Book Argument]\label{combineddba}
A bettor may face a Dutch Book on a finite family of bets and reversed bets, through an unfortunate choice of stakes, if her betting quotients do not satisfy Axioms \ref{Probabilities} \ref{probax1} -- \ref{probax42} and this further axiom:
\begin{enumerate}[label=\roman*)]
\setcounter{enumi}{8}
\item\label{probaxpu} $\mbox{for all}~A \in \mathcal{L}$, $u(A) = 1 - p(A)$.
\end{enumerate}
\end{cor}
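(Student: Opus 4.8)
The plan is to argue by contraposition, reducing everything to the two Dutch Book Arguments already in hand --- Theorem~\ref{dutchbookargument} for ordinary bets and the Dutch Book Argument for reversed bets stated just above --- together with one short two-bet computation that forces the relation $u(A) = 1 - p(A)$. The key preliminary observation is that a Dutch book on a \emph{sub}family of bets is automatically a Dutch book on the whole family: set the stakes on the remaining bets to $0$. In particular, a finite family consisting only of ordinary bets, or only of reversed bets, is a (degenerate) finite family of bets and reversed bets.

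First I would dispose of the two easy cases. If the quotients $p(\cdot)$ violate one of Axioms~\ref{Probabilities}~\ref{probax1}--\ref{probax4}, then Theorem~\ref{dutchbookargument} already produces stakes on a finite family of \emph{ordinary} bets forcing sure loss; by the observation above this is a combined Dutch book. Symmetrically, if $u(\cdot)$ violates one of the reversed-bet axioms \ref{probax12}--\ref{probax42}, the Dutch Book Argument for reversed bets supplies a combined Dutch book in the same way.

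It remains to handle the new Axiom~\ref{probaxpu}, $u(A) = 1 - p(A)$. Suppose it fails, say $u(A) \neq 1 - p(A)$ for some $A \in \mathcal{L}$. If $p(A) \notin [0,1]$, the single-bet portion of the proof of Theorem~\ref{dutchbookargument} already yields a Dutch book on one ordinary bet on $A$; if $u(A) \notin [0,1]$, the analogous single-reversed-bet fact does the same. So assume $0 \le p(A) \le 1$ and $0 \le u(A) \le 1$, and consider an ordinary bet on $A$ with stake $S_1$ at quotient $p(A)$ alongside a reversed bet on $A$ with stake $S_2$ at quotient $u(A)$. Because, in the $\to$-free fragment, every assignment of sets of truth-values to atomic propositions either makes $A$ take a designated value or does not, there are exactly two pay-off cases; writing them out and putting $S_1 = S_2 = S$ gives net gain $\bigl(1 - p(A) - u(A)\bigr)S$ in \emph{both} cases, so choosing $S$ of sign opposite to $1 - p(A) - u(A)$ --- possible precisely because $u(A) \neq 1 - p(A)$ --- yields certain loss. (This is exactly the remark recorded immediately before the statement, specialised to a bet and a reversed bet on the same proposition.) Collecting the three cases gives the corollary.

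I do not anticipate a genuine obstacle: the only content beyond invoking earlier results is the two-bet calculation for \ref{probaxpu}, which has essentially already been carried out in the text. The one point to state carefully is that passing from a Dutch book on a subfamily to one on the full family is legitimate --- i.e.\ that zero stakes are admissible --- and that the combined family of bets and reversed bets genuinely subsumes each of the two pure families, so that the earlier theorems apply verbatim.
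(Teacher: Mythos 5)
Your proposal is correct and follows essentially the same route as the paper, which offers no separate proof but relies on exactly the ingredients you assemble: the two prior Dutch Book theorems for Axioms \ref{probax1}--\ref{probax42}, plus the remark preceding the corollary that a bet on $A$ at quotient $p$ together with a reversed bet on $A$ at quotient $u$ admits a Dutch book unless $u = 1 - p$. Your explicit two-case computation with $S_1 = S_2 = S$ giving net gain $(1 - p(A) - u(A))S$ is just that remark spelled out (the paper instead points back to the pair-of-bets analysis in the Axiom \ref{probax1} part of Theorem \ref{dutchbookargument}), and your zero-stakes observation for extending a subfamily Dutch book to the full family is a harmless, correct tidying-up.
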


This shows that the betting quotients for reversed bets stand to \S{}\ref{plab}'s uncertainties as the betting quotients for bets stand to probabilities. As that association might make one suspect, Axiom \ref{probaxpu} shows that there is really no need to introduce reversed bets in addition to ordinary bets. (Notice that the argument for Corollary \ref{combineddba} applies in the classical case as well.)

\subsection{Theorem of Total Probability}\label{SectionTTP}

The definition of conditional probability used above is the same as the classical one but, as the setting has changed, we must devote some attention to how it is to be employed in adaptations of classical updating rules. Classical updating rules depend on the Theorem of Total Probability, to which we now turn.

In the classical case we prove the Theorem by relying on the equivalence $A = A \wedge \bigvee\limits_i B_i$, where the $B_i$ form a partition, i.e., $\bigvee\limits_i B_i= \top$ and for $i \neq j, B_i \wedge B_j = \bot$. In the classical case partitions exist as a matter of logic. In the present setting there is no such guarantee. When we have the effect of one, relative to the probability distribution in play (see below), we obtain an analogous Theorem: 

\begin{thm}[Theorem of Total Probability]\label{TTP}
$p(A) = \sum\limits_i p(A|B_i)p(B_i)$ when $p(B_i \wedge B_j) = 0$, $i \neq j$, and $p(\bigvee\limits_i B_i) = 1$.
\end{thm}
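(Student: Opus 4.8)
The plan is to mimic the classical derivation of the Theorem of Total Probability, using Dunn--Belnap logic's ``pseudo-classical'' behaviour of $\land$ and $\lor$ (DeMorgan, distributivity, double negation) in place of full Boolean structure, and being careful to invoke Axiom \ref{Probabilities} \ref{probax3} (the inclusion--exclusion identity) rather than any additivity principle that presupposes partitions. The key preliminary fact I need is a finite inclusion--exclusion-style principle: if $p(C_i \wedge C_j) = 0$ for all $i \neq j$, then $p\!\left(\bigvee_i C_i\right) = \sum_i p(C_i)$. This follows from Axiom \ref{Probabilities} \ref{probax3} by induction on the number of disjuncts: the inductive step rewrites $p\!\left(\bigvee_{i \le k+1} C_i\right)$ using \ref{probax3} applied to $\bigvee_{i \le k} C_i$ and $C_{k+1}$, so I must show the cross-term $p\!\left(\left(\bigvee_{i \le k} C_i\right) \wedge C_{k+1}\right) = 0$. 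By distributivity $\left(\bigvee_{i \le k} C_i\right) \wedge C_{k+1} \vDash \bigvee_{i \le k}(C_i \wedge C_{k+1})$ (in fact they are interderivable), and each $C_i \wedge C_{k+1}$ has probability $0$, so by \ref{probax2} and the induction hypothesis applied to these $k$ pairwise-disjoint-in-probability conjunctions, the cross-term has probability $0$ as well. (One small subtlety: I need $p(D) = 0$ whenever $D \vDash E$ and $p(E) = 0$, which is immediate from \ref{probax1} and \ref{probax2}.)

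Next I apply this to $C_i := A \wedge B_i$. The hypotheses give $p(B_i \wedge B_j) = 0$ for $i \neq j$; since $(A \wedge B_i) \wedge (A \wedge B_j) \vDash B_i \wedge B_j$, we get $p\!\left((A \wedge B_i) \wedge (A \wedge B_j)\right) = 0$, so the inclusion--exclusion principle yields $p\!\left(\bigvee_i (A \wedge B_i)\right) = \sum_i p(A \wedge B_i)$. By distributivity of $\wedge$ over $\vee$ in Dunn--Belnap logic, $\bigvee_i (A \wedge B_i)$ and $A \wedge \bigvee_i B_i$ are interderivable, hence by \ref{probax2} (applied both ways) $p\!\left(A \wedge \bigvee_i B_i\right) = \sum_i p(A \wedge B_i)$. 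Now I use the hypothesis $p\!\left(\bigvee_i B_i\right) = 1$: since $A \wedge \bigvee_i B_i \vDash A$ and $A \vDash A$, while also — and this is where the ``value $1$'' hypothesis does its work — $p(A) \le p\!\left(A \vee \bigvee_i B_i\right)$ and $p\!\left(A \vee \bigvee_i B_i\right) + p\!\left(A \wedge \bigvee_i B_i\right) = p(A) + p\!\left(\bigvee_i B_i\right) = p(A) + 1$ by \ref{probax3}; combined with $p\!\left(A \vee \bigvee_i B_i\right) \le 1$ from \ref{probax1}, this forces $p\!\left(A \wedge \bigvee_i B_i\right) \ge p(A)$, and the reverse inequality is just \ref{probax2} from $A \wedge \bigvee_i B_i \vDash A$. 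Hence $p(A) = p\!\left(A \wedge \bigvee_i B_i\right) = \sum_i p(A \wedge B_i)$.

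Finally, for each $i$ with $p(B_i) > 0$, Axiom \ref{Probabilities} \ref{probax4} gives $p(A \wedge B_i) = p(A|B_i)\,p(B_i)$; for each $i$ with $p(B_i) = 0$ we have $p(A \wedge B_i) = 0$ (by \ref{probax2}, since $A \wedge B_i \vDash B_i$) and the term $p(A|B_i)p(B_i)$ is taken to be $0$, so summing over all $i$ gives $p(A) = \sum_i p(A|B_i)p(B_i)$, as required. I expect the main obstacle to be the careful handling of the ``effect of a partition'' hypotheses: in the non-Boolean setting one cannot silently replace $A$ by $A \wedge \bigvee_i B_i$, so the step deriving $p(A) = p\!\left(A \wedge \bigvee_i B_i\right)$ from $p\!\left(\bigvee_i B_i\right) = 1$ must be run explicitly through \ref{probax1}--\ref{probax3} as above, and the inclusion--exclusion induction must be stated for probabilities (pairwise probability-$0$ intersections) rather than for genuinely disjoint propositions.
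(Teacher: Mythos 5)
Your proof is correct and takes essentially the same route as the paper's: the same preliminary lemma that $p(\bigvee_i C_i)=\sum_i p(C_i)$ under pairwise probability-zero conjunctions, proved by the same induction through Axiom iii) with the cross-term killed via distributivity and monotonicity, followed by the same use of $p(\bigvee_i B_i)=1$ to force $p(A \wedge \bigvee_i B_i)=p(A)$ and a final appeal to Axiom iv). Your explicit handling of terms with $p(B_i)=0$ is a small point of extra care that the paper leaves implicit, but it does not change the argument.
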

We first state and prove 
\begin{lemma}
$p(\bigvee\limits^n_{i = 1} C_i) = \sum\limits^n_{i = 1} p(C_i)$ when $p(C_i \wedge C_j) = 0$, $1 \leq i < j \leq n$.
\end{lemma}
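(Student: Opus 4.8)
The plan is to prove the Lemma by induction on $n$. For $n = 1$ the claim is immediate, and for $n = 2$ it is just Axiom \ref{Probabilities} \ref{probax3} together with the hypothesis $p(C_1 \wedge C_2) = 0$: from $p(C_1 \wedge C_2) + p(C_1 \vee C_2) = p(C_1) + p(C_2)$ one reads off $p(C_1 \vee C_2) = p(C_1) + p(C_2)$.

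For the inductive step I would assume the Lemma for $n$ and take $C_1, \ldots, C_{n+1}$ with $p(C_i \wedge C_j) = 0$ for $1 \le i < j \le n+1$. Write $D = \bigvee_{i=1}^n C_i$. Applying Axiom \ref{Probabilities} \ref{probax3} to $D$ and $C_{n+1}$ gives $p(D \wedge C_{n+1}) + p(D \vee C_{n+1}) = p(D) + p(C_{n+1})$, so it suffices to show $p(D \wedge C_{n+1}) = 0$; the inductive hypothesis then supplies $p(D) = \sum_{i=1}^n p(C_i)$, and, noting that any two bracketings of $\bigvee_{i=1}^{n+1} C_i$ are logically equivalent and hence equiprobable by two uses of Axiom \ref{Probabilities} \ref{probax2}, we obtain $p(\bigvee_{i=1}^{n+1} C_i) = \sum_{i=1}^{n+1} p(C_i)$.

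To show $p(D \wedge C_{n+1}) = 0$ I would invoke the distributivity of $\wedge$ over $\vee$ available in Dunn--Belnap logic (the $\to$-free fragment here): $D \wedge C_{n+1}$ is logically equivalent to $\bigvee_{i=1}^{n}(C_i \wedge C_{n+1})$, so by Axiom \ref{Probabilities} \ref{probax2} applied in both directions the two have the same probability. Each disjunct has $p(C_i \wedge C_{n+1}) = 0$ by hypothesis, and the family $\{C_i \wedge C_{n+1}\}_{i=1}^{n}$ again satisfies the pairwise condition, since $(C_i \wedge C_{n+1}) \wedge (C_j \wedge C_{n+1}) \vDash C_i \wedge C_j$ gives $0 \le p\big((C_i \wedge C_{n+1}) \wedge (C_j \wedge C_{n+1})\big) \le p(C_i \wedge C_j) = 0$ by Axioms \ref{Probabilities} \ref{probax1} and \ref{probax2}. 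Hence the inductive hypothesis applies to this family, yielding $p\big(\bigvee_{i=1}^{n}(C_i \wedge C_{n+1})\big) = \sum_{i=1}^{n} p(C_i \wedge C_{n+1}) = 0$, and therefore $p(D \wedge C_{n+1}) = 0$.

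The only real obstacle is the bookkeeping around distributivity and associativity of $\vee$ for $\vDash$; but the text already records that Dunn--Belnap logic has the distributivity laws and DeMorgan/double-negation, so this is safe. Once that is granted the argument is a routine (nested) induction: monotonicity (Axiom \ref{Probabilities} \ref{probax2}) together with non-negativity (Axiom \ref{Probabilities} \ref{probax1}) do all the work of propagating ``probability zero'' through meets and joins, and nothing else is delicate.
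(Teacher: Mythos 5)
Your proof is correct and follows essentially the same route as the paper's: induction on $n$, splitting off $C_{n+1}$ with Axiom iii), and then killing $p\big((\bigvee_{i=1}^{n}C_i)\wedge C_{n+1}\big)$ by distributing the conjunction over the disjunction, checking that the family $\lbrace C_i \wedge C_{n+1}\rbrace$ is again pairwise probability-zero via monotonicity and non-negativity, and applying the induction hypothesis a second time to that family. The only differences are cosmetic (your explicit $n=2$ base case and the remark about bracketings), so there is nothing to add.
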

\begin{proof}
Trivially true for $n = 1$.

Suppose the lemma holds for $n = k$. Then, by axioms \ref{probax2} and \ref{probax3}  in Definition \ref{Probabilities}, $p(\bigvee\limits^{k + 1}_{i = 1} C_i) = p((\bigvee\limits^k_{i = 1} C_i) \vee C_{k + 1}) = p(\bigvee\limits^k_{i = 1} C_i) + p(C_{k + 1}) - p((\bigvee\limits^k_{i = 1} C_i) \wedge C_{k + 1})$. By the induction hypothesis, this is $\sum\limits^{k + 1}_{i = 1} p(C_i) - p(\bigvee\limits^{k}_{i = 1} (C_i \wedge C_{k + 1}))$. Since, by axioms \ref{probax1}--\ref{probax3}, $0 \leq p((C_i \wedge C_{k + 1}) \wedge (C_j \wedge C_{k + 1})) \leq p(C_i \wedge C_j) \leq 0$, by the induction hypothesis again, $p(\bigvee\limits^{k}_{i = 1} (C_i \wedge C_{k + 1})) = \sum\limits^k_{i = 1} p(C_i \wedge C_{k + 1})$; by hypothesis, $p(C_i \wedge C_{k + 1}) = 0$, $1 \leq i \leq k$, hence $p(\bigvee\limits^{k + 1}_{i = 1} C_i) = \sum\limits^{k + 1}_{i = 1} p(C_i)$.

The result now follows by induction.
\end{proof}

\begin{proof}[Proof of Theorem of Total Probability]
By axioms \ref{probax1}--\ref{probax3} in Definition \ref{Probabilities}, $p(A \wedge \bigvee_i B_i) = p(A)$, since $1 = p(\bigvee_i B_i) \leq p(A \vee \bigvee_i B_i) \leq 1$. By axiom \ref{probax2}, $p(A \wedge \bigvee_i B_i) = p(\bigvee_i (A \wedge  B_i))$ and since, by axioms \ref{probax1}--\ref{probax3} , $0 \leq p((A \wedge B_i) \wedge (A \wedge B_j)) \leq p(B_i \wedge B_j)$, the result now follows by the preceding lemma and axiom \ref{probax4} in Definition \ref{Probabilities}.
\end{proof}

This Theorem of Total Probability differs from the classical case only in that the probability distribution determines the applicability of the theorem, and, in particular, determines which sets behave enough like partitions. That said, the mechanics of the proof are almost identical to those in the classical case.

\begin{obs}
That a set of propositions behaves like a partition under one probability distribution may well entail that it does so under a related distribution. For example, if $\lbrace B_i \colon 1 \leq i \leq n \rbrace$ \emph{behaves like a partition} under the probability distribution $p$, \textit{i.e.}, $p(B_i \wedge B_j) = 0$, $i \neq j$, and $p(\bigvee_i B_i) = 1$, then, for any $C$ such that $p(C) > 0$, $\lbrace B_i \wedge C\colon 1 \leq i \leq n \rbrace$ \emph{behaves like a partition} under the probability distribution $p(\cdot|C)$. \label{Obscond}
\end{obs}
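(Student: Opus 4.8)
The plan is to unfold the definition of ``behaves like a partition'' for the distribution $q := p(\cdot\,|\,C)$ and reduce each of its two clauses to the corresponding fact for $p$, using only the additivity and monotonicity axioms of Definition \ref{Probabilities}. The hypothesis $p(C) > 0$ guarantees $q$ is well defined, and, as noted just after Definition \ref{Probabilities}, $q$ is then itself a probability distribution in the sense of axioms \ref{probax1}--\ref{probax3}, so the conclusion is meaningful. Concretely we must establish (a) $q\big((B_i \wedge C) \wedge (B_j \wedge C)\big) = 0$ for $i \neq j$, and (b) $q\big(\bigvee_i (B_i \wedge C)\big) = 1$.

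For (a): by axiom \ref{probax4} applied to $p$, $q\big((B_i \wedge C) \wedge (B_j \wedge C)\big) = p\big((B_i \wedge C) \wedge (B_j \wedge C) \wedge C\big)/p(C)$. From the evaluation clauses for $\wedge$ (or axiom A3), $(B_i \wedge C) \wedge (B_j \wedge C) \wedge C \vDash B_i \wedge B_j$, so axioms \ref{probax1} and \ref{probax2} sandwich the numerator: $0 \leq p\big((B_i \wedge C) \wedge (B_j \wedge C) \wedge C\big) \leq p(B_i \wedge B_j) = 0$. Hence the numerator, and therefore $q\big((B_i \wedge C) \wedge (B_j \wedge C)\big)$, is $0$.

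For (b): by axiom \ref{probax4}, $q\big(\bigvee_i (B_i \wedge C)\big) = p\big(\big(\bigvee_i (B_i \wedge C)\big) \wedge C\big)/p(C)$. Since Dunn--Belnap logic validates distributivity of $\wedge$ over $\vee$ (together with idempotence of $\wedge$), $\big(\bigvee_i (B_i \wedge C)\big) \wedge C$ is logically equivalent to $\big(\bigvee_i B_i\big) \wedge C$, so two applications of axiom \ref{probax2} give $p\big(\big(\bigvee_i (B_i \wedge C)\big) \wedge C\big) = p\big(\big(\bigvee_i B_i\big) \wedge C\big)$. Now, exactly as in the proof of Theorem \ref{TTP}, the hypothesis $p(\bigvee_i B_i) = 1$ with axioms \ref{probax1} and \ref{probax2} forces $p\big(\big(\bigvee_i B_i\big) \vee C\big) = 1$, and then axiom \ref{probax3} yields $p\big(\big(\bigvee_i B_i\big) \wedge C\big) = p(\bigvee_i B_i) + p(C) - p\big(\big(\bigvee_i B_i\big) \vee C\big) = p(C)$. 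Hence $q\big(\bigvee_i (B_i \wedge C)\big) = p(C)/p(C) = 1$, as required.

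I expect no real obstacle: the proof is essentially a re-run of the bookkeeping already done for Theorem \ref{TTP}. The only point demanding care is that the manipulations happen in Dunn--Belnap logic rather than classically, so the equivalences used in (b) --- distributivity, and implicitly De Morgan and double negation --- must be cited explicitly (they all hold, as remarked in \S\ref{LLR}), and one must recall that $p$ respects a logical equivalence only via two uses of the monotonicity axiom \ref{probax2} rather than a single ``logically equivalent formulas are equiprobable'' principle.
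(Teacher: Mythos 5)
Your proof is correct, and since the paper states Observation \ref{Obscond} without giving any proof, there is nothing to diverge from: your argument is precisely the bookkeeping the paper implicitly relies on, unfolding $p(\cdot|C)$ via axiom \ref{probax4} and sandwiching with axioms \ref{probax1}--\ref{probax3} exactly as in the proof of Theorem \ref{TTP}. One tiny quibble: De Morgan and double negation play no role here --- only the truth clauses for $\wedge$ and $\vee$ (monotonicity in both directions of the mutual entailment, plus distributivity if you prefer to phrase it syntactically) are needed.
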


\subsection{A Diachronic Dutch Book argument}

\begin{definition}[Bayesian Conditionalization] A probability distribution $p$ is updated to the distribution $p^*$ by \emph{Bayesian conditionalization} on $B$ if $p(B) > 0$ and\label{Bayescondsimp}
\begin{center}
for all propositions $A$, $p^*(A) = p(A|B)$.
\end{center} 
\end{definition}
As pointed out by van Fraassen \cite{van1989laws}, diachronic Dutch Book arguments require the assumption of an announced updating strategy (and hence vulnerability can be avoided by not announcing such a strategy). In our framework, we could simple take having previously announced---{}or conventionally instituted---{}updating strategies to be a feature of well-behaved labs. If we do, then there is a Dutch Book argument for adhering to the strategy.

\begin{thm}[Diachronic Dutch Book for Bayesian Conditionalization]\label{ddba}
A bettor may face a Dutch Book on a finite family of bets, through an unfortunate choice of stakes, if, having announced an updating strategy, her betting quotients do not satisfy Definition \ref{Bayescondsimp}.
\end{thm}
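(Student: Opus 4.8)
The plan is to adapt the classical Lewis--Teller diachronic Dutch Book argument to our four-valued, non-Boolean setting, relying on the synchronic machinery already established (Theorems~\ref{dutchbookargument} and~\ref{conversedba}). Suppose the bettor has announced that on learning $B$ she will conditionalize on $B$ --- so $p(B) > 0$ --- yet her updated quotients violate Definition~\ref{Bayescondsimp}, i.e.\ there is a proposition $A$ with $p^{*}(A) \neq p(A|B)$. (The only other way to fail Definition~\ref{Bayescondsimp} is $p(B) = 0$, which makes the announced strategy ill-defined; we set that aside.) We may assume the quotients at the earlier time are themselves given by a probability $p$, since otherwise a synchronic Dutch Book is already available by Theorem~\ref{dutchbookargument}. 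Write $r = p(A|B)$, $r^{*} = p^{*}(A)$, $b = p(B)$ and $\delta = r^{*} - r \neq 0$. The bookmaker arranges three bets: at the earlier time, (i) a conditional bet on $A$ given $B$ at betting quotient $r$ with stake $S_1$ and (ii) a bet on $B$ at the bettor's quotient $b$ with stake $S_2$; and at the later time, \emph{if} $B$ has taken a designated value, (iii) a bet on $A$ at betting quotient $r^{*}$ with stake $S_3$, which the bettor is committed to accept since she has announced $p^{*}$ --- whereas if $B$ is not designated, bet~(iii) is never placed and bet~(i) is called off.

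The crux is to set $S_3 = -S_1$. With this choice, whenever $B$ is designated the joint contribution of bets~(i) and~(iii) to the bettor's net payoff is the same whether or not $A$ is designated: in the $A$-designated subcase it is $(1-r)S_1 - (1-r^{*})S_1 = \delta S_1$, and in the $A$-not-designated subcase it is $-rS_1 + r^{*}S_1 = \delta S_1$. Choosing the sign of $S_1$ opposite to that of $\delta$ with $|S_1| = 1$ makes this quantity $-|\delta| < 0$. The bettor's net payoff is then $-|\delta| + (1-b)S_2$ on every assignment in which $B$ is designated and $-bS_2$ on every assignment in which $B$ is not. Finally pick $S_2 > 0$ small enough that $(1-b)S_2 < |\delta|$ (any $S_2 > 0$ works when $b = 1$); then the net payoff is strictly negative on \emph{every} assignment of truth-value sets to the atomic propositions, which is exactly a Dutch Book.

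The main obstacle --- really the only point where care is needed --- is our insistence that a Dutch Book produce a net loss on \emph{all} assignments, whereas the textbook diachronic construction merely breaks even on the branch where $B$ is not learned. Bet~(ii) is what converts that break-even into a strict loss, and the bound $(1-b)S_2 < |\delta|$ is what simultaneously keeps the $B$-designated branch negative; checking that these two constraints on $S_2$ are jointly satisfiable --- which needs only $b > 0$ and $\delta \neq 0$, both in hand --- is the step to watch. Beyond that, the construction invokes nothing about negation or $\rightarrow$ past the ``truisms about truth'' already used in the proof of Theorem~\ref{dutchbookargument}, so it carries over verbatim to the Dunn--Belnap setting, and, as in the synchronic case, an entirely parallel argument can be run with reversed bets in place of bets.
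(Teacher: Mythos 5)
Your construction is correct, but it takes a genuinely different route from the paper. The paper's proof simply re-runs its synchronic argument for Axiom \ref{Probabilities} \ref{probax4} from Theorem \ref{dutchbookargument}: it places earlier-time bets on $B$ (quotient $p$, stake $S_1$) and on $A \wedge B$ (quotient $q$, stake $S_2$), and lets the post-update bet on $A$ at the announced quotient $r = p^*(A)$ play exactly the role that the conditional bet on $A$ given $B$ played there (it is simply never placed when $B$ fails to be designated), so the same choice of stakes yields strict loss on every assignment precisely when $pr \neq q$, \textit{i.e.}\ when $p^*(A) \neq p(A|B)$; this inherits the full ``if and only if'' bookkeeping of the synchronic case at the price of being stated very tersely (with some sign slips in the displayed payoffs). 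You instead use the Lewis--Teller hedge: a conditional bet on $A$ given $B$ at the bettor's prior conditional quotient paired with an opposite-stake later bet on $A$ at $p^*(A)$, which locks in the loss $|\delta| = |p^*(A) - p(A|B)|$ on the $B$-designated branch, plus a small side bet on $B$ to convert the break-even on the other branch into the strict loss that the paper's definition of Dutch book demands --- a point you rightly single out as the only delicate step, and which your bound $(1-b)S_2 < |\delta|$ together with $b = p(B) > 0$ does settle. Your version needs the prior quotients to be coherent, which you legitimately arrange by appeal to Theorem \ref{dutchbookargument}, and it makes the size of the guaranteed loss explicit; setting aside the degenerate $p(B) = 0$ case is also what the paper implicitly does, so that is not a gap. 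Both arguments use only the designated/non-designated split, so both transfer to the Dunn--Belnap setting for the same reason.
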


\begin{proof}
Three bets are to be made: on $B$ at betting quotient $p$ and stake $S_1$, on $A \wedge B$ at betting quotient $q$ and stake $S_2$ and on $A$ updated on $B$ at betting quotient $r$ and stake $S_3$. Assume $p > q$. Let

\begin{align*}
    G_1 & = pS_1 & T \notin v(B)\\
    G_2 & = (1-p)S_{1} - qS_{2} - rS_{3} & T \notin v(A), T \in v(B)\\
    G_3 & = (1-p_{1})S_{1} + (1-q)S_{2} + (1-r)S_{3} & T \in v(A), T \in v(B).
\end{align*}

The argument concerning Axiom \ref{probax4} in the proof of Theorem \ref{dutchbookargument} now applies, given some trivial modifications, as it does to the case $q < p$.
\end{proof}

The following converse Diachronic Dutch Book argument guarantees the consistency of the strategy:

\begin{thm}[Converse Dutch Book Argument for Conditionalization]
Having announced the she will follow the updating stratgey of Bayesian conditonalization (Definition \ref{Bayescondsimp}), a bettor cannot, through an unfortunate choice of stakes, face a Dutch Book on a finite family of (diachronic) bets.
\end{thm}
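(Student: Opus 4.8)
The plan is to mirror the proof of Theorem~\ref{conversedba}, the synchronic Converse Dutch Book Argument; the only new ingredient is the treatment of a diachronic ``updated'' bet. Let $p$ be the bettor's prior, which, by Definition~\ref{Bayescondsimp}, satisfies Axioms~\ref{probax1}--\ref{probax4} and has $p(B) > 0$, and let $p^{*}$ be the announced posterior, so that $p^{*}(A) = p(A \land B)/p(B) = p(A|B)$ for every $A$. Exactly as in the proof of Theorem~\ref{conversedba}, define the classical, finitely additive measure $P$ on the (classical) algebra generated by the metalinguistic propositions $T \in v(A)$, $A \in \mathcal{L}$, by setting $P(T \in v(A)) = p(A)$; the verification that this assignment is consistent with all the axioms governing $P$ is word-for-word the one given there, since it uses only Axioms~\ref{probax1}--\ref{probax4} together with the ``truisms about truth'' behaviour of $\land$ and $\lor$ in Dunn--Belnap logic. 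Note that $P$ is built from the \emph{prior} $p$: this is the characteristic feature of a diachronic argument, and it works because the announced strategy makes each time-$2$ bet predictably equivalent to a conditional bet that is already fair under the prior.

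Next I would observe that a diachronic bet on $A$ updated on $B$, placed at the announced betting quotient $p^{*}(A)$ with stake $S$, has exactly the payoff profile of a synchronic \emph{conditional} bet on $A$ given $B$ at betting quotient $p^{*}(A)$ with stake $S$: it pays nothing when $T \notin v(B)$ (the time-$2$ bet is called off), pays $(1 - p^{*}(A))S$ to the bettor when $T \in v(B)$ and $T \in v(A)$, and pays $p^{*}(A)S$ to the bookmaker when $T \in v(B)$ and $T \notin v(A)$. Hence, by the conditional-bet expectation computation in the proof of Theorem~\ref{conversedba}, its expected value relative to $P$ is $\big[\,P(T \in v(A)\text{ and }T \in v(B)) - p^{*}(A)\,P(T \in v(B))\,\big]S = \big[\,p(A \land B) - p(A|B)\,p(B)\,\big]S = 0$, precisely because the announced strategy fixes the quotient at $p(A|B)$. (More generally, a time-$2$ conditional bet on $C$ given $D$ at quotient $p^{*}(C|D)$ coincides in payoff with a conditional bet on $C$ given $B \land D$ at that same quotient, and by the remark following Definition~\ref{Probabilities} one has $p^{*}(C|D) = p(C \land D \land B)/p(D \land B) = p(C \mid D \land B)$, so its $P$-expectation is again zero.)

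Finally I would assemble the argument as in Theorem~\ref{conversedba}: a finite family of diachronic bets decomposes into ordinary bets at quotients $p(A_i)$, ordinary conditional bets at quotients $p(A_i|B_i)$, and updated bets at quotients $p^{*}(A_i) = p(A_i|B_i)$, each of which has $P$-expectation zero by the computation above together with those in the proof of Theorem~\ref{conversedba}. Since the meta-language is classical, the expectation of the net payoff is the sum of these zero expectations, hence zero; but on a Dutch book the net payoff is negative on every assignment of sets of truth-values to the atomic propositions, which would force the expectation to be strictly negative. This contradiction shows that no Dutch book is faced.

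The main obstacle I anticipate is conceptual rather than computational: identifying the announced time-$2$ bet with a conditional bet whose fair quotient \emph{relative to the prior} is exactly $p(A|B)$, so that the single measure $P$ continues to witness the fairness of every bet in the diachronic family. Once that identification is secured, the proof is a routine re-run of the synchronic converse argument, with no further appeal to the four-valued machinery beyond what Theorem~\ref{conversedba} already uses.
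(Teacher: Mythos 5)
Your proposal is correct and takes essentially the same route as the paper: the paper's proof (following Skyrms) simply translates each later bet made under the announced conditionalization strategy into a synchronic conditional bet on $A$ given $B$ at the prior time and then invokes Theorem~\ref{conversedba} to get zero expected loss, which is exactly your identification of the ``updated'' bet with a called-off conditional bet whose quotient $p^{*}(A)=p(A|B)$ makes its $P$-expectation zero. You merely spell out the payoff matching and the expectation bookkeeping that the paper delegates to the citation of Skyrms and to the synchronic converse argument.
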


\begin{proof}
The proof is essentially that of  \cite{Skyrms1987}.
Given that the bettor conditionalizes, we can translate her bets at different times to bets at one time. For any bet on a proposition $A$ offered at a later time we substitute a conditional bet at the earlier time on $A$, where the condition is some truth known at that earlier time. Theorem \ref{conversedba} ensures that the bettor's expectation of loss on this (synchronic) family of bets is 0.
\end{proof}

\subsection{Jeffrey Conditionalization}

We now turn to a more general form of updating, where beliefs over a partition $\{B_i:1 \leq i \leq n\}$ change exogenously, using Howson and Urbach's \cite{HowsonandUrbach} felicitous term, while degrees of belief conditional on the members of the partition remain the same. When this happens the probabilities over other propositions need to be redistributed, that is, we need to go from a probability function $p$ over propositions to a new probability function $p^*$. In the classical case we use the Theorem of Total Probability with the given equalities $p(A|B_i)$ = $p^*(A|B_i)$ to obtain \[p^*(A) = \sum\limits_i p(A|B_i)p^*(B_i),\] \textit{i.e.}, Jeffrey conditionalization. 

We can use Theorem \ref{TTP} to obtain an analogous form:

\begin{definition}[Jeffrey conditionalization]
\[p^*(A) = \sum\limits_i p(A|B_i)p^*(B_i),\]
when $\sum \limits_{i=1}^n p^*(B_i) = 1$ and $p^*(B_i \wedge B_j) = 0$ for $i \neq j$.
\end{definition}

\noindent Notice that, by stipulation, the $B_i$'s behave as a partition with respect to the distribution $p^*$ \emph{but not necessarily the distribution $p$}. It's not where you're coming from, it's where you're going to that matters.

\subsubsection{Dutch Book arguments regarding Jeffrey conditionalization}

Even a brief perusal of the Dutch Book arguments for probability kinematics provided by Brad Armendt \cite{Armendt1980} and Bryan Skyrms \cite{Skyrms1987} shows them to be far too long to replicate, let alone adapt to the current setting, here. We may return to this topic on another occasion.

\subsection{Contrasts and comparisons with other approaches} 

As far as we know, we are the first to have introduced a relative frequency interpretation in a four-valued framework.

Edwin Mares \cite{Mares2014} offers a Dutch Book argument for a set of axioms intended to be applied to a broad class of structures. His approach is semantic: where we assign probabilities to sentences of the language, Mares goes \textit{via} models in which the sentences are interpreted. Furthermore, we have no analogue of his axiom
\begin{center}
  If $W \in \mathcal{A}$ then $P(W) = 1$ and if $\emptyset \in \mathcal{A}$ then $P(\emptyset) = 0$
\end{center}  
where $\mathcal{A}$ is an algebra of subsets of $W$. There is also a stylistic difference in the way we set out the Dutch Book argument. Mares works in terms of expected values;\footnote{One might wonder what exactly an expected value is when taken relative to a function that, \textit{ex hypothesi}, does not satisfy the probability axioms.} we provide arguments in the style of de Finetti and only introduce expectations in our Converse Dutch Book arguments. We offer these for the synchronic and diachronic cases; Mares offers neither.

J. Michael Dunn \cite{Dunn2010} defines probabilities within a four-valued framework where the probabilities are classical. Dunn does not offer a frequency interpretation, although \S{}\ref{RelativeFrequencies}  could be rewritten in his terms. Dunn's approach takes the four values to be independent, so that $p(v(A)=T) + p(v(A)=F) + p(v(A)=\{T,F\}) + p(v(A)=\emptyset)= 1.$ The primary distinction for us, in the case of both bets and frequencies is whether a proposition takes a designated value or not: we have a two-way split, while Dunn has a four-way split.  Dunn's approach is more granular, capturing distinctions between sequences of values such as $\langle\emptyset,\{T,F\}\rangle$ and $\langle F, T \rangle$.\footnote{This was pointed out to us by Mike Dunn.} Our approach ties probability more closely to the entailment relation of Dunn--Belnap logic.

\section{Co\"{o}rdinated Updates and Conditionals}

It is now time to return to the full language of the first part. So far we have equipped each laboratory with a probability function. We now index those probabilities by laboratory and turn to how they may co\"{o}rdinate their results. Indexing the probability functions leads to a modal version of probability, allowing us to examine analogues of Bayesian conditionalization, Jeffery conditionalization, and Adams' conditioning (as Richard Bradley calls it \cite{Bradley2005}); we also say a little on the probability of conditionals.

\subsection{Co\"{o}rdinated Conditional Probability}

Returning to our fundamental motivation: we have a laboratory $x$ that collects and evaluates data obtained from a laboratory $y$ that initiates a test set-up and which is then followed up by a laboratory $z$. We will now focus on labs that are in regular close contact: labs that hold meetings at designated times to coördinate their research. During these meetings, the labs may choose to follow the advice of the others by adopting their probabilities in their domain of expertise, provided doing so is coherent. We shall confine attention to the updates of the lab $x$ which has oversight.

Co\"{o}ordination might look like this: lab $x$ begins with some prior probability $p_x(A)$. $x$ turns to lab $y$'s expertise for a relevant likelihood $p_y(B|A)$, and to $z$'s expertise for $p_z(B)$. The net effect of this updating process is given by

\begin{definition}[Co\"{o}rdinated  Conditionalization]\label{diacondit}
Having obtained values for $p_y(B|A)$ and $p_z(B)$, $p_x(A)$ is updated by \emph{co\"{o}rdinated conditionalization on $B$} to $p^*_x(A)$, where
\[ p^*_x(A) = \frac{p_y(B|A)p_x(A)}{p_z(B)}. \]
\end{definition}

We can think of Definition \ref{diacondit} as a consistency constraint on group activity. We should also note that there may be many pairs of labs $y$ and $z$ reporting to $x$: we take $x$ as updating piecemeal as labs report their results.\footnote{We could adopt a different updating strategy. We could wait until all information is in, and then update by aggregating laboratory probabilities \textit{via} opinion pooling. Indeed, laboratory $x$ could lay out a protocol for classical statistical methods.}

\begin{thm}\label{CCGeneral}
When $p_x(B) \cdot p_y(B) \cdot p_z(B) > 0$, $p^*_x(\cdot)$ is obtained by co\"{o}rdinated conditionalization on $B$ if, and only if,
\begin{enumerate}[label=\roman*)]
    \item $p^*_x(B) = \dfrac{p_x(B)}{p_z(B)}$;
    \item for all propositions $A$, $p_x^*(A) = 0$ if, and only if, $p_x(A) = 0$ or $p_y(B|A) = 0$;
    \item for all propositions $A$ and $C$, if $p_x^*(A) \cdot p_x^*(C) > 0$ then $\dfrac{p_x(A)}{p_x^*(A)} \cdot p_y(B|A) = \dfrac{p_x(C)}{p_x^*(C)} \cdot p_y(B|C)$.
\end{enumerate}
\end{thm}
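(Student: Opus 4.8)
The plan is to prove the biconditional in Theorem \ref{CCGeneral} by working directly from the defining equation $p^*_x(A) = p_y(B|A)p_x(A)/p_z(B)$ of Definition \ref{diacondit}, which is available because the hypothesis $p_x(B)\cdot p_y(B)\cdot p_z(B) > 0$ guarantees $p_z(B) > 0$ (so the quotient is well defined) and, via Axiom \ref{Probabilities} \ref{probax4}, that $p_y(B|A)$ makes sense whenever needed. For the forward direction I would simply substitute into the definition: taking $A = B$ and using $p_y(B|B) = p_y(B\wedge B)/p_y(B) = 1$ (which holds since $p_y(B) > 0$ and, as noted after Definition \ref{Probabilities}, $p(B|B)$ attains the upper bound) gives (i) immediately. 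For (ii), since $p_z(B) > 0$, the product $p_y(B|A)p_x(A)$ is zero exactly when one of its factors is, giving the stated disjunction. For (iii), when $p^*_x(A)\cdot p^*_x(C) > 0$ both $p_x(A)$ and $p_x(C)$ are nonzero, so we may rearrange $p^*_x(A) = p_y(B|A)p_x(A)/p_z(B)$ to $\frac{p_x(A)}{p^*_x(A)}\cdot p_y(B|A) = p_z(B)$, and likewise with $C$; both sides equal $p_z(B)$, hence each other.

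For the converse direction, assume (i)--(iii) hold for a given distribution $p^*_x$ and derive that $p^*_x(A) = p_y(B|A)p_x(A)/p_z(B)$ for every proposition $A$. The natural move is to use (iii) with $C := B$: provided $p^*_x(B) > 0$ and $p^*_x(A) > 0$, (iii) gives $\frac{p_x(A)}{p^*_x(A)}\cdot p_y(B|A) = \frac{p_x(B)}{p^*_x(B)}\cdot p_y(B|B) = \frac{p_x(B)}{p^*_x(B)}$, and by (i) the right-hand side equals $p_z(B)$; rearranging yields exactly the desired identity. I would first check that $p^*_x(B) > 0$: by (ii), $p^*_x(B) = 0$ iff $p_x(B) = 0$ or $p_y(B|B) = 0$, and both are excluded by the hypotheses ($p_x(B) > 0$ and $p_y(B|B) = 1$), so indeed $p^*_x(B) > 0$ and (iii) is applicable with $C = B$.

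The main obstacle is the degenerate case where $p^*_x(A) = 0$, since then neither (iii) nor the rearrangement step applies directly. Here I would invoke (ii): $p^*_x(A) = 0$ iff $p_x(A) = 0$ or $p_y(B|A) = 0$, and in either of those subcases the product $p_y(B|A)p_x(A)$ is $0$, so $p_y(B|A)p_x(A)/p_z(B) = 0 = p^*_x(A)$ as well, and the target identity holds trivially. Thus the case split $p^*_x(A) > 0$ versus $p^*_x(A) = 0$ is exhaustive, and the identity $p^*_x(A) = p_y(B|A)p_x(A)/p_z(B)$ holds for all $A$, which is precisely the statement that $p^*_x$ is obtained by co\"{o}rdinated conditionalization on $B$. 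A small point of care throughout: one must consistently use that $p_y(B|A)$ is only defined when $p_y(B) > 0$, which is part of the standing hypothesis, so the expression on the right of Definition \ref{diacondit} is unambiguous; I would state this once at the outset rather than repeat it.
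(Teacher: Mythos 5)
Your proof is correct and takes essentially the same route as the paper's: the forward direction by direct substitution into Definition \ref{diacondit} (using $p_y(B|B)=1$), and the converse by applying (iii) with $C=B$ together with (i) to recover $p_z(B)$, with (ii) disposing of the degenerate case $p^*_x(A)=0$. One small slip in your closing remark: the hypothesis $p_y(B)>0$ guarantees that $p_y(B|B)$ is defined (and equals $1$), not that $p_y(B|A)$ is defined for arbitrary $A$ (which would need $p_y(A)>0$) --- but the paper treats these likelihoods with the same level of informality, so this does not affect the substance of the argument.
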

\begin{proof}
If $p^*_x(\cdot)$ is obtained by co\"{o}rdinated conditioning on $B$ then, since  $p_y(B) > 0$, $P_y(B|B) = 1$ and i) -- iii) obviously obtain.

Conversely, if i) -- iii) obtain then, if $p_x(A) = 0$ or $p_y(B|A) = 0$, $p_x^*(A) = 0$, so suppose that neither obtains.  Then, since $\dfrac{p_x(B)}{p_z(B)} \neq 0$, $p_x^*(B) \neq 0$ and $\dfrac{p_x(A)}{p_x^*(A)} \cdot p_y(B|A) = \dfrac{p_x(B)}{p_x^*(B)} \cdot p_y(B|B) = \dfrac{p_x(B)}{p_x^*(B)} = p_z(B)$, hence $p^*_x(A) = \dfrac{p_y(B|A)p_x(A)}{p_z(B)}$.
\end{proof}

\noindent Theorem \ref{CCGeneral} characterizes a particular form of promiscuous adoption of others' opinions motivated by our interpretation. Of particular interest are the following special cases.

\subsection{Co\"{o}rdinated Bayesian Conditionalization}
\begin{definition}[Co\"{o}rdinated Bayesian Conditionalization]  
Let $\lbrace B_1, B_2 \rbrace$ behave as a partition under the probability distribution $p_z$, \textit{i.e.} $p_z(B_1 \wedge B_2) = 0$ and $p_z(B_1 \vee B_2) = 1$, and suppose that $p_y(B_1) > 0$ and that $p_y(B_1 \wedge B_2) = p_z(B_1 \wedge B_2)$. Then  $p^*_x$ is said to be obtained from $p_y$ and $p_z$ by \emph{co\"{o}rdinated Bayesian conditionalization} on $B_1$ just in case, for all propositions $A$, \label{Bayescond2}
\begin{center}
    $p^*_x(A) = p_y(A|B_1).$
\end{center}
\end{definition}

\begin{thm}
When $\lbrace B_1, B_2 \rbrace$ behaves as a partition under the probability distribution $p_z$ and $p_y(B_1) > 0$ and $p_y(B_1 \wedge B_2) = p_z(B_1 \wedge B_2)$ then $p^*_x(.)$ is obtained from $p_y$ by co\"{o}rdinated Bayesian conditionalization on $B_1$ if, and only if,
\begin{enumerate}[label=\roman*)]
    \item Extremal: $p_x^*(B_1) = 1$ and $p_x^*(B_2) = 0$;
    \item $x$-$y$ rigidity: for all propositions $A$, $p^*_x(A|B_1) = p_y(A|B_1)$;
    \item Partition: $\lbrace B_1, B_2 \rbrace$ behaves as a partition under $p_x^*$.
\end{enumerate}
\end{thm}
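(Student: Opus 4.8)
The plan is to prove the two implications by direct computation with the axioms of Definition~\ref{Probabilities}, taking care to route every step through equivalences that hold in Dunn--Belnap logic --- idempotency, commutativity and associativity of $\wedge$, and distributivity of $\wedge$ over $\vee$ --- rather than through any complementation behaviour of $\neg$, since $B_2$ need not be a complement of $B_1$. Two facts are used silently: if $C$ and $D$ are semantically interderivable then $p(C)=p(D)$ (two applications of Axiom~\ref{Probabilities}~\ref{probax2}), and the hypotheses give $p_y(B_1)>0$ (so $p_y(\cdot\mid B_1)$ is defined) together with $p_y(B_1\wedge B_2)=p_z(B_1\wedge B_2)=0$.

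Suppose first that $p^*_x(A)=p_y(A\mid B_1)$ for every proposition $A$. For Extremal: $p^*_x(B_1)=p_y(B_1\mid B_1)=p_y(B_1\wedge B_1)/p_y(B_1)=1$ and $p^*_x(B_2)=p_y(B_2\mid B_1)=p_y(B_1\wedge B_2)/p_y(B_1)=0$. For Partition: $B_1\vDash B_1\vee B_2$ with Axiom~\ref{Probabilities}~\ref{probax2} forces $p^*_x(B_1\vee B_2)=1$, and then Axiom~\ref{Probabilities}~\ref{probax3} with Extremal gives $p^*_x(B_1\wedge B_2)=p^*_x(B_1)+p^*_x(B_2)-p^*_x(B_1\vee B_2)=0$. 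For $x$-$y$ rigidity: since $p^*_x(B_1)=1>0$, Axiom~\ref{Probabilities}~\ref{probax4} gives $p^*_x(A\mid B_1)=p^*_x(A\wedge B_1)$, and $p^*_x(A\wedge B_1)=p_y\big((A\wedge B_1)\mid B_1\big)=p_y\big((A\wedge B_1)\wedge B_1\big)/p_y(B_1)=p_y(A\wedge B_1)/p_y(B_1)=p_y(A\mid B_1)$.

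Conversely, assume Extremal, $x$-$y$ rigidity and Partition. The heart of the matter is the identity $p^*_x(A)=p^*_x(A\wedge B_1)$, which I would obtain exactly as in the proof of the Theorem of Total Probability (Theorem~\ref{TTP}): by distributivity $A\wedge(B_1\vee B_2)$ is interderivable with $(A\wedge B_1)\vee(A\wedge B_2)$; since $(A\wedge B_1)\wedge(A\wedge B_2)\vDash B_1\wedge B_2$, Partition and Axiom~\ref{Probabilities}~\ref{probax2} make $p^*_x\big((A\wedge B_1)\wedge(A\wedge B_2)\big)=0$, so the lemma preceding Theorem~\ref{TTP} yields $p^*_x\big((A\wedge B_1)\vee(A\wedge B_2)\big)=p^*_x(A\wedge B_1)+p^*_x(A\wedge B_2)$; Partition also gives $p^*_x(B_1\vee B_2)=1$, whence $p^*_x\big(A\wedge(B_1\vee B_2)\big)=p^*_x(A)$ by the same squeeze argument ($1=p^*_x(B_1\vee B_2)\le p^*_x(A\vee B_1\vee B_2)\le 1$ together with Axiom~\ref{Probabilities}~\ref{probax3}); and $A\wedge B_2\vDash B_2$ with Extremal gives $p^*_x(A\wedge B_2)=0$. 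Putting these together, $p^*_x(A)=p^*_x(A\wedge B_1)$. Finally $p^*_x(B_1)=1>0$, so Axiom~\ref{Probabilities}~\ref{probax4} gives $p^*_x(A\wedge B_1)=p^*_x(A\mid B_1)$, which by $x$-$y$ rigidity equals $p_y(A\mid B_1)$; hence $p^*_x(A)=p_y(A\mid B_1)$ for all $A$, i.e. $p^*_x$ is obtained by co\"{o}rdinated Bayesian conditionalization on $B_1$.

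I do not expect a genuine obstacle here; the only thing that needs care is that one must never divide by $p^*_x(B_2)$ or by $p_z(B_2)$ (either may be $0$) and never invoke a complementation law for $\neg$ --- the ``$B_2$-part'' of $A$ is disposed of entirely through the vanishing of $p^*_x(A\wedge B_2)$ and the finite-additivity lemma. As with the unconditional case treated earlier, it is worth noting that the three conditions are exactly what one would anticipate: Extremal fixes the post-update marginal on the partition, $x$-$y$ rigidity imports $y$'s conditional opinions unchanged, and Partition is precisely the hypothesis that makes the Theorem-of-Total-Probability machinery available.
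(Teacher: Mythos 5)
Your proof is correct and takes essentially the same route as the paper's: the forward direction is computed identically (Extremal and rigidity by the same calculations, Partition by a trivially different bookkeeping step), and the converse reduces $p^*_x(A)$ to $p^*_x(A\mid B_1)=p_y(A\mid B_1)$ via the total-probability machinery. The only notable difference is that you unpack Theorem~\ref{TTP} by hand to get $p^*_x(A)=p^*_x(A\wedge B_1)$ directly, which quietly sidesteps the paper's literal appeal to the term $p^*_x(A\mid B_2)\,p^*_x(B_2)$, in which the conditional probability is strictly undefined because $p^*_x(B_2)=0$ --- a small but genuine tightening of the same argument.
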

\begin{proof}
Suppose that $\lbrace B_1, B_2 \rbrace$ behaves as a partition under $p_z$, that $p_y(B_1) > 0$, that $p_y(B_1 \wedge B_2) = p_z(B_1 \wedge B_2)$, and that $p^*_x(.)$ is obtained from $p_y$ and $P_z$ by co\"{o}rdinated Bayesian conditionalization on $B_1$.\\
\

As $p_y(B_1) > 0$, $p_y(B_1|B_1)$ and $p_y(B_2|B_1)$ are well defined; moreover, $p_x^*(B_1) = p_y(B_1|B_1) = \dfrac{p_y(B_1 \wedge B_1)}{p_y(B_1)} = \dfrac{p_y(B_1)}{p_y(B_1)} = 1$ and $p_x^*(B_2) = p_y(B_2|B_1) = \dfrac{p_y(B_1 \wedge B_2)}{p_y(B_1)} = \dfrac{p_z(B_1 \wedge B_2)}{p_y(B_1)} =  \dfrac{0}{p_y(B_1)} = 0$.\\
\

As $0 \leq p_x^*(B_1 \wedge B_2) \leq p_x^*(B_2) = 0$ and $1 = p_x^*(B_1) \leq p_x^*(B_1 \vee B_2) \leq 1$, $\lbrace B_1, B_2 \rbrace$ behaves as a partition under $p_x^*$.\\
\

$p_x^*(A|B_1) = \dfrac{p_x^*(A \wedge B_1)}{P_x^*(B_1)} = p_x^*(A \wedge B_1) = p_y(A \wedge B_1|B_1) = p_y(A|B_1)$.

Conversely, suppose that i), ii) and iii) obtain. By the Theorem of Total Probability (Theorem \ref{TTP})
\[p_x^*(A) = p_x^*(A|B_1)p_x^*(B_1) + p_x^*(A|B_2)p_x^*(B_2) = p_x^*(A|B_1) = p_y(A|B_1).\]
\end{proof}

\subsection{Co\"{o}rdinated Jeffrey conditionalization}

\begin{definition}[Co\"{o}rdinated Jeffrey conditionalization] When $\lbrace B_i : 1 \leq i \leq n \rbrace$ behaves as a partition under the probability distribution $p_z^*$, \textit{i.e.} $p_z^*(B_i \wedge B_j) = 0$, $i \neq j$, and $p_z^*(\bigvee\limits_i B_i) = 1$, $p_y(B_i) > 0$, $1 \leq i \leq n$, and $p_y(B_i \wedge B_j) = p_z^*(B_i \wedge B_j)$, $i \neq j$, then $p_x^*(\cdot)$ is obtained from $p_y$ and $p_z^*$ by \emph{co\"{o}rdinated Jeffrey conditionalization} on $\lbrace B_i : 1 \leq i \leq n \rbrace$ just in case, for all propositions $A$,
\[ p_x^*(A) = \sum\limits_i p_y(A|B_i)p^*_z(B_i). \]
\end{definition}

\begin{thm}
When $\lbrace B_i : 1 \leq i \leq n \rbrace$ behaves as a partition under the probability distribution $p_z^*$, $p_y(B_i) > 0$, $1 \leq i \leq n$, and $p_y(B_i \wedge B_j) = p_z^*(B_i \wedge B_j)$, $i \neq j$, then $p_x^*(\cdot)$ is obtained from $p_y$ and $p_z^*$ by co\"{o}rdinated Jeffrey conditionalization on $\lbrace B_i : 1 \leq i \leq n \rbrace$, if, and only if,
\begin{enumerate}[label=\roman*)]
    \item $x$-$z$ rigidity: $p_x^*(B_i) = p_z^*(B_i)$;
    \item $x$-$y$ rigidity: for all propositions $A$, $p^*_x(A|B_i) = p_y(A|B_i)$;
    \item Partition: the $B_i$'s behave as a partition with respect to the distributions $p^*_x$.
\end{enumerate}
\end{thm}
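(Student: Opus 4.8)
The plan is to follow the template of the co\"{o}rdinated Bayesian conditionalization theorem, of which this is the $n$-block generalisation, working throughout from the defining identity $p_x^*(A)=\sum_i p_y(A|B_i)p_z^*(B_i)$. As a preliminary I would record two facts. First, since $p_y(B_i)>0$ for every $i$, each $p_y(\cdot|B_i)$ is defined and (as noted after Definition \ref{Probabilities}) satisfies Axioms \ref{probax1}--\ref{probax3}, so $p_x^*$ --- the convex combination of these sub-probabilities with weights $p_z^*(B_i)$, which sum to $p_z^*(\bigvee_i B_i)=1$ by the Lemma preceding Theorem \ref{TTP} --- is itself a probability function, hence satisfies Axiom \ref{probax4} as the definition of its conditional probabilities. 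Second, since $\{B_i\}$ partitions under $p_z^*$, the hypothesis $p_y(B_i\wedge B_j)=p_z^*(B_i\wedge B_j)$ yields $p_y(B_i\wedge B_j)=0$ for $i\neq j$ as well. Both directions then reduce to careful bookkeeping over the index $i$, using only Dunn--Belnap logical facts (idempotency and commutativity of $\wedge$ from Lemma \ref{assoccom}, the entailments $X\wedge B_i\wedge B_j\vDash B_i\wedge B_j$, and the mutual entailments between $(\bigvee_l B_l)\wedge B_i$ and $B_i$ from A1--A3, A5 and the Deduction Theorem) together with monotonicity (Axiom \ref{probax2}).

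For the left-to-right direction I would assume $p_x^*(A)=\sum_i p_y(A|B_i)p_z^*(B_i)$ and verify iii), i), ii) in turn. \emph{Partition}: expand $p_x^*(B_j\wedge B_k)=\sum_i \frac{p_y(B_j\wedge B_k\wedge B_i)}{p_y(B_i)}\,p_z^*(B_i)$ for $j\neq k$; the numerator equals $p_y(B_j\wedge B_k)=0$ when $i\in\{j,k\}$ (idempotency) and is $\leq p_y(B_j\wedge B_k)=0$ when $i\notin\{j,k\}$ (monotonicity), so $p_x^*(B_j\wedge B_k)=0$; and $p_x^*(\bigvee_l B_l)=\sum_i \frac{p_y((\bigvee_l B_l)\wedge B_i)}{p_y(B_i)}\,p_z^*(B_i)=\sum_i p_z^*(B_i)=p_z^*(\bigvee_i B_i)=1$, since $(\bigvee_l B_l)\wedge B_i$ is interderivable with $B_i$. \emph{$x$--$z$ rigidity}: the same vanishing leaves only the $i=j$ term in $p_x^*(B_j)$, giving $p_x^*(B_j)=p_z^*(B_j)$. \emph{$x$--$y$ rigidity}: fix $i$ with $p_x^*(B_i)>0$ (if $p_x^*(B_i)=0$ the conditional is undefined and the clause is vacuous); in $p_x^*(A\wedge B_i)=\sum_l \frac{p_y(A\wedge B_i\wedge B_l)}{p_y(B_l)}\,p_z^*(B_l)$ only the $l=i$ term survives and equals $p_y(A|B_i)\,p_x^*(B_i)$, so dividing by $p_x^*(B_i)$ (Axiom \ref{probax4}) gives $p_x^*(A|B_i)=p_y(A|B_i)$.

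For the right-to-left direction I would assume i)--iii). Condition iii) is exactly what licenses applying the Theorem of Total Probability (Theorem \ref{TTP}) to $p_x^*$, yielding $p_x^*(A)=\sum_i p_x^*(A|B_i)p_x^*(B_i)$, with the convention (as in that theorem's proof) that a summand with $p_x^*(B_i)=0$ is read as $0$. For $i$ with $p_x^*(B_i)>0$, combine ii) and i) to rewrite the summand as $p_y(A|B_i)p_z^*(B_i)$; for $i$ with $p_x^*(B_i)=0$, i) forces $p_z^*(B_i)=0$, so $p_y(A|B_i)p_z^*(B_i)=0$ matches. Summing gives $p_x^*(A)=\sum_i p_y(A|B_i)p_z^*(B_i)$, i.e.\ co\"{o}rdinated Jeffrey conditionalization.

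I do not expect a deep obstacle --- this is structurally the many-block version of the co\"{o}rdinated Bayesian theorem. The two points requiring care are: (a) every cancellation of cross terms must be traced to an explicit Dunn--Belnap fact (Lemma \ref{assoccom} or Axiom \ref{probax2}) rather than taken for granted, since in this setting $\wedge$ does not automatically behave like Boolean meet; and (b) the blocks $B_i$ with $p_z^*(B_i)=0$ must be handled uniformly, so that condition ii) is read as vacuous for them while the matching summands still cancel on both sides. The only substantive external input is the Theorem of Total Probability, and its hypotheses are supplied precisely by condition iii).
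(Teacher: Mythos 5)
Your proposal is correct and follows essentially the same route as the paper: verify the three conditions by expanding $p_x^*$ via the defining sum and killing cross terms using $p_y(B_i\wedge B_j)=p_z^*(B_i\wedge B_j)=0$ together with monotonicity, then derive the converse from the Theorem of Total Probability (licensed by the Partition condition) combined with the two rigidity conditions. Your explicit handling of blocks with $p_x^*(B_i)=p_z^*(B_i)=0$ is slightly more careful than the paper's presentation, but it is the same argument.
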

\begin{proof}
Suppose that $\lbrace B_i : 1 \leq i \leq n \rbrace$ behaves as a partition under the probability distribution $p_z^*$, $p_y(B_i) > 0$, $1 \leq i \leq n$, and that $p_y(B_i \wedge B_j) = p_z^*(B_i \wedge B_j)$, $i \neq j$.

As $p_y(B_i) > 0$, $p_y(B_i|B_i)$ and $p_y(B_j|B_i)$ are well defined for all $i$, $j$, $1 \leq i, j \leq n$. $p_y(B_i|B_i) = \dfrac{p_y(B_i \wedge B_i)}{p_y(B_i)} = \dfrac{p_y(B_i)}{p_y(B_i)} = 1$. $p_y(B_i|B_j) = \dfrac{p_y(B_i \wedge B_j)}{p_y(B_j)} = \dfrac{p_z^*(B_i \wedge B_j)}{p_y(B_j)} =  \dfrac{0}{p_y(B_j)} = 0$, $i \neq j$. Consequently, 
\[ p_x^*(B_i) = \sum\limits_j p_y(B_i|B_j)p^*_z(B_j) = p^*_z(B_i). \]\\
\

When $i \neq j$, $0 \leq p_y(B_i \wedge B_j|B_k) \leq \min\lbrace p_y(B_i |B_k), p_y(B_j|B_k) \rbrace = 0$ since $i \neq k$ or $j \neq k$. Hence 
$p_x^*(B_i \wedge B_j) = \sum\limits_k p_y(B_i \wedge B_j|B_k)p^*_z(B_k) = 0$.

\begin{align*}
p_x^*(\bigvee\limits_i B_i) &= \sum\limits_j p_y(\bigvee\limits_i B_i|B_j)p^*_z(B_j) = \sum\limits_j \dfrac{p_y((\bigvee\limits_i B_i) \wedge B_j)}{p(B_j)}p^*_z(B_j)\\
&= \sum\limits_j \dfrac{p(B_j)}{p(B_j)}p^*_z(B_j) = \sum\limits_j p^*_z(B_j) = 1.
\end{align*}

Thus $\lbrace B_i : 1 \leq i \leq n \rbrace$ behaves as a partition under $p_x^*$.\\
\

$p_x^*(A|B_i) = \dfrac{p_x^*(A \wedge B_i)}{P_x^*(B_i)} = \dfrac{\sum\limits_j p_y(A \wedge B_i|B_j)p^*_z(B_j).}{P_z^*(B_i)} = p_y(A \wedge B_i|B_i) = p_y(A|B_i)$ since $0 \leq p_y(A \wedge B_i \wedge B_j) \leq p_y(B_i \wedge B_j) = p^*_z(B_i \wedge B_j) = 0$ when $i \neq j$.\\
\

Conversely, suppose that i), ii) and iii) obtain. By the Theorem of Total Probability (Theorem \ref{TTP})
\[p_x^*(A) = \sum\limits_j p_x^*(A|B_i)p_x^*(B_i) =  \sum\limits_j p_y(A|B_i)p_z^*(B_i).\]
\end{proof}

\subsection{Co\"{o}rdinated Adams conditioning}
Richard Bradley \cite{Bradley2005} introduces (and names) \emph{Adams conditioning} as a counterpart to Jeffrey conditionalization, where likelihoods change but the probabilities of certain propositions remain the same. That is, we look for a new probability function where, for some salient $A$, the probability of $A$ remains unchanged, \textit{i.e.} $p(A) = p^*(A)$, while the likelihoods change from $p(\cdot|\cdot)$ to $p^*(\cdot|\cdot)$. We adapt this idea to the present setting.

\begin{definition}[Co\"{o}rdinated Adams  conditioning]  Let $\lbrace A_1, A_2 \rbrace$ and $\lbrace B_1, B_2 \rbrace$ behave as partitions under the probability distribution $p_z$, \textit{i.e.} $p_z(A_1 \wedge A_2) = p_z(B_1 \wedge B_2) = 0$ and $p_z(A_1 \vee A_2) = p_z(B_1 \vee B_2) = 1$. Suppose that  $1 >  p_y(B_1|A_1) > 0$ and that lab $y$ is caused to change its conditional probabilities for $B_1$ given $A_1$ from $p_y(B_1|A_1)$ to $p^*_y(B_1|A_1)$ and for $B_2$ given $A_1$ from $p_y(B_2|A_1)$ to $p^*_y(B_2|A_1)$ where $p^*_y(B_1|A_1) + p^*_y(B_2|A_1) = 1$. Then $x$'s new probabilities $p^*_x$ are said to be obtained by \emph{co\"{o}rdinated Adams conditioning} on this change in conditional probabilities, just in case:\label{Adamscond}
\begin{align*}
    p^*_x(C) &= \dfrac{p^*_y(B_1|A_1)}{p_y(B_1|A_1)} \cdot p_z(A_{1}\wedge{}B_{1}\wedge{}C)\\
    &~~ + \dfrac{p^*_y(B_2|A_1)}{p_y(B_2|A_1)}\cdot p_z(A_{1}\wedge{}B_{2}\wedge{}C) + p_z(A_{2}\wedge{}C).
\end{align*}
\end{definition}

\noindent We prove this analogue of Bradley's Theorem 1 \cite[p.\ 352]{Bradley2005}
\begin{thm}
Where $\lbrace A_1, A_2 \rbrace$ and $\lbrace B_1, B_2 \rbrace$ behave as partitions under the probability distribution $p_z$ and $p_z(B_1|A_1) = p_y(B_1|A_1)$ and $p_z(B_2|A_1) = p_y(B_2|A_1)$, $x$'s new probabilities $p^*_x$ are obtained by Adams conditioning on a change in $y$'s conditional probabilities of $B_1$ given $A_1$ and $B_2$ given $A_1$ iff:
\begin{enumerate}[label=\roman*)]
    \item $x$-$z$ Rigidity: $p^*_x(A_1) = p_z(A_1)$ and $p^*_x(A_2) = p_z(A_2)$;
    \item $x$-$y$ Rigidity: $p^*_x(B_1|A_1) = p^*_y(B_1|A_1)$ and $p^*_x(B_2|A_1) = p^*_y(B_2|A_1)$;
    \item $x$-$z$ Rigidity:  for all propositions $C$, $p^*_x(C|A_{1}\wedge{}B_{1}) =  p_z(C|A_{1}\wedge{}B_{1})$, $p^*_x(C|A_{1}\wedge{}B_{2}) =  p_z(C|A_{1}\wedge{}B_{2})$, and $p^*_x(C|A_2) = p_z(C|A_2)$;
    \item Partition: $\lbrace A_{1}\wedge{}B_{1}, A_{1}\wedge{}B_{2}, A_{2} \rbrace$ behaves as a partition under $p^*_x$.
\end{enumerate}
\end{thm}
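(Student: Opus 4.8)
The statement is a biconditional characterising the Adams-conditioning recipe of Definition \ref{Adamscond}, and the plan is to prove the two directions separately, after first recording the structural fact that does all the work: under $p_z$ the three propositions $A_{1}\wedge B_{1}$, $A_{1}\wedge B_{2}$, $A_{2}$ \emph{behave as a partition}. This is a ``two-level'' partition --- the cell $A_1$ of $\lbrace A_1,A_2\rbrace$ refined by $\lbrace B_1,B_2\rbrace$. Disjointness is immediate: $p_z((A_1\wedge B_1)\wedge(A_1\wedge B_2)) \le p_z(B_1\wedge B_2)=0$ and $p_z((A_1\wedge B_i)\wedge A_2)\le p_z(A_1\wedge A_2)=0$ by axioms \ref{probax1}--\ref{probax3}; that the disjunction has $p_z$-value $1$ follows from two applications of (the lemma preceding) the Theorem of Total Probability, using the De~Morgan-lattice equivalences of Dunn--Belnap logic (distributivity, idempotence, commutativity, associativity), which by axiom \ref{probax2} applied in both directions transfer to equalities of probabilities. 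I would also record here the positivity facts forced by the hypotheses: since $1>p_y(B_1|A_1)>0$ and $p_z(B_i|A_1)=p_y(B_i|A_1)$, we get $p_z(A_1)>0$; by Observation \ref{Obscond} (applied to $p_z$ and $A_1$) $p_y(B_1|A_1)+p_y(B_2|A_1)=1$, so $p_y(B_2|A_1)>0$; and hence $p_z(A_1\wedge B_i)=p_y(B_i|A_1)\,p_z(A_1)>0$.

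For the forward direction, assume $p^{*}_{x}$ is given by the formula of Definition \ref{Adamscond}. Substituting $C=A_1\wedge B_1$, then $C=A_1\wedge B_2$, then $C=A_2$: in each case disjointness kills all but one summand and idempotence simplifies the survivor, giving $p^{*}_{x}(A_1\wedge B_1)=p^{*}_{y}(B_1|A_1)\,p_z(A_1)$, $p^{*}_{x}(A_1\wedge B_2)=p^{*}_{y}(B_2|A_1)\,p_z(A_1)$, $p^{*}_{x}(A_2)=p_z(A_2)$. Feeding in conjunctions of distinct cells gives $0$, and summing the three values and using $p^{*}_{y}(B_1|A_1)+p^{*}_{y}(B_2|A_1)=1$ plus $p_z(A_1)+p_z(A_2)=1$ gives total $1$: this is iv). A further total-probability step over $\lbrace B_1,B_2\rbrace$ inside $A_1$ yields $p^{*}_{x}(A_1)=p_z(A_1)$ and $p^{*}_{x}(A_2)=p_z(A_2)$, which is i); then ii) follows by the product rule, legitimate because $p^{*}_{x}(A_1)=p_z(A_1)>0$. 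For iii), substitute $C\wedge A_1\wedge B_1$ (resp.\ $C\wedge A_1\wedge B_2$, $C\wedge A_2$) into the Adams formula, again only one term survives, divide by $p^{*}_{x}$ of the relevant cell, and simplify using $p_z(A_1\wedge B_1)=p_y(B_1|A_1)\,p_z(A_1)$ (resp.\ its analogues) to recover $p_z(C|A_1\wedge B_1)$, etc.

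For the converse, assume i)--iv). By iv) the three cells behave as a partition under $p^{*}_{x}$, so Theorem \ref{TTP} gives $p^{*}_{x}(C)=p^{*}_{x}(C|A_1\wedge B_1)\,p^{*}_{x}(A_1\wedge B_1)+p^{*}_{x}(C|A_1\wedge B_2)\,p^{*}_{x}(A_1\wedge B_2)+p^{*}_{x}(C|A_2)\,p^{*}_{x}(A_2)$. Use iii) to replace each conditional by $p_z(C|\,\cdot\,)$; use i), ii) and the product rule (valid since $p^{*}_{x}(A_1)=p_z(A_1)>0$) to get $p^{*}_{x}(A_1\wedge B_i)=p^{*}_{y}(B_i|A_1)\,p_z(A_1)$ and $p^{*}_{x}(A_2)=p_z(A_2)$; then $p_z(C|A_1\wedge B_1)\cdot p^{*}_{y}(B_1|A_1)\,p_z(A_1)=\tfrac{p^{*}_{y}(B_1|A_1)}{p_y(B_1|A_1)}\,p_z(A_1\wedge B_1\wedge C)$ because $p_z(A_1\wedge B_1)=p_y(B_1|A_1)\,p_z(A_1)$, similarly for the $B_2$ term, and $p_z(C|A_2)\,p_z(A_2)=p_z(A_2\wedge C)$. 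Summing the three rewritten terms is exactly the right-hand side of Definition \ref{Adamscond}.

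The only real obstacle is careful bookkeeping rather than any deep idea: one must repeatedly invoke the Dunn--Belnap lattice equivalences to license identifications such as ``$p_z$ of $A_1\wedge B_1\wedge C\wedge A_1\wedge B_1$ equals $p_z$ of $A_1\wedge B_1\wedge C$'', and track which conjunctions are $p_z$-null because they contain $B_1\wedge B_2$ or $A_1\wedge A_2$ as a subconjunction. One degenerate case needs the usual convention that a term $p(C|D)\,p(D)$ is read as $0$ when $p(D)=0$: if $p_z(A_2)=0$ then $p_z(C|A_2)$ is undefined and the third clause of iii) is vacuous, but with that convention $p^{*}_{x}(A_2)=0$ by i) and $p_z(A_2\wedge C)=0$, so the argument is unaffected. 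Modulo this, the proof follows the template of the coördinated Jeffrey-conditionalization theorem proved above.
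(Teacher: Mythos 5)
Your proposal is correct and follows essentially the same route as the paper: the forward direction by substituting suitable $C$'s into the Adams formula (using the $p_z$-nullity of $A_1\wedge A_2$ and $B_1\wedge B_2$ and $p_z(A_1\wedge B_i)=p_y(B_i|A_1)p_z(A_1)$), and the converse by expanding $p^*_x(C)$ via the Theorem of Total Probability over the three-cell partition and rewriting each term with i)--iii). The only differences are cosmetic --- the paper gets $p^*_x(A_1)$ by plugging $C=A_1$ directly rather than summing cell masses, and you add explicit attention to positivity and degenerate cases that the paper leaves implicit.
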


\begin{proof} (After \cite[p.\ 362]{Bradley2005}.) If $p^*$ is obtained by Adams conditioning on a change in $y$'s conditional probabilities of $B_1$ given $A_1$ and $B_2$ given $A_1$ then by Definition \ref{Adamscond} and the assumption that $p_z(B_1|A_1) = p_y(B_1|A_1)$ and $p_z(B_2|A_1) = p_y(B_2|A_1)$,

\begin{align*}
p^*_x(A_1) &= \dfrac{p^*_y(B_1|A_1)}{p_y(B_1|A_1)} \cdot p_z(A_{1}\wedge{}B_{1}) + \dfrac{p^*_y(B_2|A_1)}{p_y(B_2|A_1)} \cdot p_z(A_{1}\wedge{}B_{2}) + p_z(A_{1}\wedge{}A_2)\\
&= \dfrac{p^*_y(B_1|A_1)}{p_z(B_1|A_1)} \cdot p_z(A_{1}\wedge{}B_{1}) + \dfrac{p^*_y(B_2|A_1)}{p_z(B_2|A_1)}  \cdot p_z(A_{1}\wedge{}B_{2})\\
&= p^*_y(B_1|A_1) \cdot p_z(A_1) + p^*_y(B_2|A_1) \cdot p(A_1)_z \\
&= [p^*_y(B_1|A_1) + p^*_y(B_2|A_1)] \cdot p_z(A_1)\\
&= 1 \cdot p_z(A_1) = p_z(A_1).
\end{align*}

Noticing that $0 \leq p_z(A_{1}\wedge{}B_{1}\wedge{}A_2) \leq p_z(A_{1}\wedge{}A_2) = 0$ and $0 \leq p_z(A_{1}\wedge{}B_{2}\wedge{}A_2) \leq p_z(A_{1}\wedge{}A_2) = 0$, we have
\begin{align*}
p^*_x(A_2) &= \dfrac{p^*_y(B_1|A_1)}{p_y(B_1|A_1)} \cdot p_z(A_{1}\wedge{}B_{1}\wedge{}A_2)\\
&~~ + \dfrac{p^*_y(B_2|A_1)}{p_y(B_2|A_1)} \cdot p_z(A_{1}\wedge{}B_{2}\wedge{}A_2) + p_z(A_2)\\
&= p_z(A_2).
\end{align*}

Noticing in addition that $p_z(A_{1}\wedge{}B_{1}\wedge{}A_{1}\wedge{}B_{1}) = p_z(A_{1}\wedge{}B_{1})$, $p_z(A_{1}\wedge{}B_{1}\wedge{}C\wedge{}A_{1}\wedge{}B_{1}) = p_z(A_{1}\wedge{}B_{1}\wedge{}C)$, $0 \leq p_z(A_{1}\wedge{}B_{2}\wedge{}C\wedge{}A_{1}\wedge{}B_{1}) \leq p(B_{1}\wedge{}B_{2}) = 0$, and $0 \leq p_z(A_{2}\wedge{}C\wedge{}A_{1}\wedge{}B_{1}) \leq p_z(A_{1}\wedge{}A_{2}) = 0$, we have
\begin{align*}
p^*_x(B_1|A_1) &= \dfrac{p^*_x(A_1 \wedge B_1)}{p^*_x(A_1)}\\
   &= \dfrac{p^*_y(B_1|A_1)}{p_y(B_1|A_1)} \times \dfrac{p_z(A_1 \wedge B_1)}{p_z(A_1)}\\
   &= p^*_y(B_1|A_1) \cdot \dfrac{p_z(B_1|A_1)}{p_y(B_1|A_1)}\\
   &= p^*_y(B_1|A_1).
\end{align*}
And, likewise, we may show that $p^*_x(B_2|A_1) = p^*_y(B_2|A_1).$

\begin{align*}
p^*_x(C|A_{1}\wedge{}B_{1}) &= \dfrac{p^*_x(C\wedge{}A_{1}\wedge{}B_{1})}{p^*_x(A_{1}\wedge{}B_{1})}\\
&= \dfrac{[p^*_y(B_{1}|A_{1})/p_y(B_1|A_{1})] \cdot p_z(C\wedge{}A_{1}\wedge{}B_{1})}{[p^*_y(B_1|A_1)/p_y(B_1|A_{1})] \cdot p_z(A_{1}\wedge{}B_{1})}\\
&= p_z(C|A_{1}\wedge{}B_{1}).
\end{align*}
Similarly, $p^*_x(C|A_{1}\wedge{}B_{2}) = p_z(C|A_{1}\wedge{}B_{2})$ and $p^*_x(C|A_{2}) = p_z(C|A_{2})$.

It is an easy consequence of Definition \ref{Probabilities} \ref{probax4} and Theorem \ref{TTP} that $\lbrace A_{1}\wedge{}B_{1}, A_{1}\wedge{}B_{2}, A_{2} \rbrace$ behaves as a partition under $p_z$ when $\lbrace A_1, A_2 \rbrace$ and $\lbrace B_1, B_2 \rbrace$ both behave as partitions under $p_z$.
Consequently, $p^*_x((A_{1}\wedge{}B_{1}) \wedge (A_{1}\wedge{}B_{2})) = p^*_x((A_{1}\wedge{}B_{1}) \wedge A_{2}) = p^*_x((A_{1}\wedge{}B_{2}) \wedge A_{2}) = 0.$
\begin{align*}
    p^*_x((A_{1}&\wedge{}B_{1}) \vee (A_{1}\wedge{}B_{2}) \vee A_{2})\\
    &= p_z(A_{1}\wedge{}B_{1}\wedge{}((A_{1}\wedge{}B_{1}) \vee (A_{1}\wedge{}B_{2}) \vee A_{2})) \cdot \dfrac{p^*_y(B_1|A_1)}{p_y(B_1|A_1)}\\
    &~~ + p_z(A_{1}\wedge{}B_{2}\wedge{}((A_{1}\wedge{}B_{1}) \vee (A_{1}\wedge{}B_{2}) \vee A_{2})) \cdot \dfrac{p^*_y(B_2|A_1)}{p_y(B_2|A_1)}\\
    &~~ + p_z(A_{2}\wedge{}((A_{1}\wedge{}B_{1}) \vee (A_{1}\wedge{}B_{2}) \vee A_{2}))\\
    &= p_z(A_{1}\wedge{}B_{1}) \cdot \dfrac{p^*_y(B_1|A_1)}{p_z(B_1|A_1)} + p_z(A_{1}\wedge{}B_{2}) \cdot \dfrac{p^*_y(B_2|A_1)}{p_z(B_2|A_1)} + p_z(A_{2})\\
    &= p_z(A_1) \cdot p^*_y(B_1|A_1) + p_z(A_1) \cdot p^*_y(B_2|A_1) + p_z(A_2)\\
    &= p_z(A_1)[p^*_y(B_1|A_1) + p^*_y(B_2|A_1)] + p_z(A_2)\\
    &= p_z(A_1) + p_z(A_2) = 1.
\end{align*} So the $x$-$z$ Dependence, $x$-$y$ Rigidity, $x$-$z$ Rigidity and Partition conditions all obtain.

Conversely, suppose now that the $x$-$z$ Rigidity, $x$-$y$ Rigidity, $x$-$z$ Rigidity and Partition conditions hold. Then, starting from the Theorem of Total Probability,
\begin{align*}
    p^*_x(C) &= p^*_x(C|A_{1}\wedge{}B_{1}) \cdot p^*_x(A_{1}\wedge{}B_{1})\\
    &~~+ p^*_x(C|A_{1}\wedge{}B_{2}) \cdot p^*_x(A_{1}\wedge{}B_{2}) + p^*_x(C|A_{2}) \cdot p^*_x(A_{2})\\
    &= p_z(C|A_{1}\wedge{}B_{1}) \cdot p^*_x(A_{1}\wedge{}B_{1})\\
    &~~ + p_z(C|A_{1}\wedge{}B_{2}) \cdot p^*_x(A_{1}\wedge{}B_{2}) + p_z(C|A_{2}) \cdot p_z(A_{2})\\
    &= \dfrac{p_z(C\wedge{}A_{1}\wedge{}B_{1})}{p_z(A_{1}\wedge{}B_{1})} \cdot p^*_x(A_{1}\wedge{}B_{1})\\
    &~~ + \dfrac{p_z(C\wedge{}A_{1}\wedge{}B_{2})}{p_z(A_{1}\wedge{}B_{2})} \cdot p^*_x(A_{1}\wedge{}B_{2}) + p_z(C\wedge{}A_{2})\\
    &= \dfrac{p^*_x(B_1|A_1) \cdot p^*_x(A_1)}{p_z(B_1|A_1) \cdot p_z(A_1)} \cdot p_z(C\wedge{}A_{1}\wedge{}B_{1})\\
    &~~ + \dfrac{p^*_x(B_2|A_1) \cdot p^*_x(A_1)}{p_z(B_2|A_1) \cdot p_z(A_1)} \cdot p_z(C\wedge{}A_{1}\wedge{}B_{2}) + p_z(C\wedge{}A_{2})\\
    &= \dfrac{p^*_y(B_1|A_1) \cdot p_z(A_1)}{p_y(B_1|A_1) \cdot p_z(A_1)} \cdot p_z(C\wedge{}A_{1}\wedge{}B_{1})\\
    &~~ + \dfrac{p^*_y(B_2|A_1) \cdot p_z(A_1)}{p_y(B_2|A_1) \cdot p_z(A_1)} \cdot p_z(C\wedge{}A_{1}\wedge{}B_{2}) + p_z(C\wedge{}A_{2})\\
    &= \dfrac{p^*_y(B_1|A_1)}{p_y(B_1|A_1)} \cdot p_z(C\wedge{}A_{1}\wedge{}B_{1})\\
    &~~ + \dfrac{p^*_y(B_2|A_1)}{p_y(B_2|A_1)} \cdot p_z(C\wedge{}A_{1}\wedge{}B_{2}) + p_z(C\wedge{}A_{2}).
\end{align*}
\end{proof}

We have generalized three forms of updating, Bayesian conditionalization, Jeffrey conditionalization and Adams conditioning to the co\"{o}rdinated setting. While less well known than the other two, Adams conditioning is particularly apt in the present setting as likelihoods may often be the business of another lab.

\subsection{Probabilities of conditionals}
Our conditional is meant to model regularities---and yet discovery of regularities is rare indeed. It behooves us, therefore, to look for a less strict account of discovery. One option would be to use a finite frequency interpretation: 

\begin{definition}[Relative Frequency of Conditionals and Negated Conditionals]\label{freqConditional}\ \\

The frequency of a regularity $A \to B$ for a laboratory \textit{x} is

$|\{\langle y, z \rangle | R_1xyz \mbox{ and if } T\in v(y, A) \mbox{ then } T\in v(z, B) \}|$.\\ 

The frequency of $\neg(A \to B)$ for a laboratory \textit{x} is

$|\{\langle y, z\rangle | R_2xyz \mbox{ and if } T\in v(y_i, A) \mbox{ then } F \in v(z, B) \}|$. 

\end{definition}

\noindent The relative frequency, of course, is the frequency divided by $|\{\langle y, z \rangle| R_{1}xyz\}|$ and $|\{\langle y, z \rangle| R_{2}xyz\}|$ as appropriate.\\

\noindent Another option would be to elaborate a scheme for assigning betting quotients.

Definition \ref{freqConditional} yields a notion of graded regularity. While in certain rare cases a regularity always holds, there are also regularities which hold only for a certain percentage of cases. (For example, a $90\%$-regularity.) Finally, it is obvious that the probability of conditionals and conditional probabilities are different, as $p(A \rightarrow B)$ is defined when $p(A)=0$, while $p(B|A)$ is not.


\begin{thebibliography}{99}
\bibliographystyle{plain}

\bibitem{Adams1975}
Adams, Ernest , \textit{The Logic of Conditionals: an Application of Probability to Deductive Logic}, Dordrecht: Reidel, 1975.

\bibitem{Armendt1980}
Armendt, Brad, `Is there a Dutch Book Argument for Probability Kinematics?', \textit{Philosophy of Science}, \textbf{47} (1980): 583--588


\bibitem{Belnap1977}
Belnap, Nuel D., Jr, `A useful four-valued logic', in J. Michael Dunn \& George Epstein (eds.), \textit{Modern Uses of Multiple-Valued Logic: Invited Papers from the Fifth International Symposium on Multiple-Valued Logic held at Indiana University, Bloomington, Indiana, May 13--16, 1975}, \textit{Episteme}, volume 2, Dordrecht: Reidel, 1977, pp. 8--37.

\bibitem{Belnap1977a}
Belnap, Nuel D., Jr, `How a computer should think', in Gilbert Ryle (ed.) \textit{Contemporary Aspects of Philosophy}, Stocksfield and Boston \textsc{ma}: Oriel Press, 1977, pp.\ 30--56.

\bibitem{Belnap1992}
Belnap, Nuel D., Jr, `A useful four-valued logic: How a computer should think', \S{}81 of Alan R. Anderson, Nuel D.\ Belnap, Jr, and J.\ Michael Dunn, \textit{Entailment: The Logic of Relevance and Necessity}, Vol.\ II, Princeton \textsc{nj} and Oxford: Princeton University Press, 1992.

\bibitem{Bradley2005}
Bradley, Richard, `Radical Probabilism and Bayesian Conditioning', \textit{Philosophy of Science}, \textbf{72} (2005): 342--364

\bibitem{Fang2010}
Casadevall, Arturo and Ferric C.\ Fang, `Editorial: Reproducible science', \textit{Infection and Immunity}, \textbf{78} (2010): 4972--4975.

\bibitem{Djousse2011}
Djouss\'{e}, Luc, Paul N.\ Hopkins, Kari E.\ North, James S.\ Pankowe, Donna K.\ Arnett and R. Curtis Ellison, `Chocolate consumption is inversely associated with prevalent coronary heart disease: The National Heart, Lung, and Blood Institute Family Heart Study', \textit{Clinical Nutrition}, \textbf{30} (2011): 182--187.

\bibitem{Dunn1976}
Dunn, J.\ Michael, `Intuitive semantics for First-Degree Entailments and ``coupled trees''{}', \textit{Philosophical Studies}, \textbf{29} (1976): 149--168.

\bibitem{Dunn2010}
Dunn, J.\ Michael. 'Contradictory information: Too much of a good thing', \textit{Journal of Philosophical Logic} \textbf{39} (2010): 425--452.

\bibitem{Edgington1992}
Edgington, Dorothy, `Validity, Uncertainty and Vagueness', \textit{Analysis}, \textbf{52} (1992): 193--204.


\bibitem{Friend2017}
Friend, Toby, `Laws are conditionals', \textit{European Journal for Philosophy of Science}, \textbf{6} (2017): 123--144.

\bibitem{HowsonandUrbach}
Howson, Colin and Peter Urbach. \textit{Scientific Reasoning: The Bayesian Approach}, second edition, La Salle \textsc{il}: Open Court, 1993.

\bibitem{Krantz1971}
Krantz, D.\ H., R.\ D. Luce, P.\ Suppes, and A.\ Tversky (1971), \textit{Foundations of Measurement}, Vol.\ 1, \textit{Additive and Polynomial Representations}, New York: Academic Press.  Reprinted Mineola \textsc{ny}: Dover Publications, 2007.

\bibitem{Kuhn1961}
Kuhn, Thomas S., 'The function of measurement in modern physical science', \textit{Isis}, \textbf{52} (1961): 161--193.


\bibitem{Mares1997}
Mares, Edwin D., `Paraconsistent probability theory and paraconsistent Bayesianism', \textit{Logique \& Analyse}, \textbf{40} (1997): 375--384.


\bibitem{Mares2004}
Mares, Edwin D., \textit{Relevant Logic: A Philosophical Interpretation}, Cambridge: Cambridge University Press, 2004.


\bibitem{Mares2006}
Mares, Edwin D., `Relevant logic, probabilistic information, and conditionals', \textit{Logique \& Analyse}, \textbf{49} (2006): 399--411.

\bibitem{Mares2014}
Mares, Edwin D., 'Belief revision, probabilism, and logic choice', \textit{The Review of Symbolic Logic} \textbf{7} (2014): 647--670.

\bibitem{Onishi2016}
Onishi, Takuro, 'Understanding Negation Implicationally in the Relevant Logic R', \textit{Studia Logica} \textbf{104} (2016): 1267-1285.

\bibitem{Paris2001}
Paris, J.\ B., `A note on the Dutch Book method', in Gert De Cooman, Terrence Fine \& Teddy Seidenfeld (eds.), \textit{ISIPTA '01, Proceedings of the Second International Symposium on Imprecise Probabilities and Their Applications, Ithaca, NY, USA}, Maastricht: Shaker Publishing, 2001, pp. 301-306. A slightly revised version, dated February 22, 2005, is available on-line at http://www.maths.manchester.ac.uk/\~{}jeff/papers/15.ps.

\bibitem{Popper1959}
Popper, Karl R., \textit{The Logic of Scientific Discovery}, unknown edition, London: Routledge, 2002. (First edition, London: Hutchinson, 1959.) Expanded English translation of \textit{Logik der Forschung}, Vienna: Springer, 1935.

\bibitem{Priest2006}
Priest, Graham, \textit{In Contradiction: A Study of the Transconsistent}, second/expanded edition, Oxford: Clarendon Press, 2006. 


\bibitem{Read1988}
Read, Stephen, \textit{Relevant Logic: A Philosophical Examination of Inference}, Oxford: Basil Blackwell, 1988. (A corrected edition, dated February 21, 2012, is available at $<$\url{https://www.st-andrews.ac.uk/~slr/Relevant_Logic.pdf}$>$.)

\bibitem{Routley1984}
Routley, Richard, `The American Plan completed: Alternative classical-style semantics, without stars, for relevant and paraconsistent logics', \textit{Studia Logica}, \textbf{43} (1984): 131--158.

\bibitem{Skyrms1987}
Skyrms, Brian, 'Dynamic coherence and probability kinematics', \textit{Philosophy of Science}, \textbf{54} (1987): 1--20.

\bibitem{Tappolet2000}
Tappolet, Christine, `Truth pluralism and many-valued logics: A reply to Beall', \textit{Philosophical Quarterly}, \textbf{50} (2000): 382--385.

\bibitem{van1989laws}
Van Fraassen, Bas C., \textit{Laws and Symmetry}, Oxford and New York: Oxford University Press, 1989.

\bibitem{Wittgenstein1980}
Wittgenstein, Ludwig, \textit{ Remarks on the Philosophy of Psychology}, vol.\ II, G.\ H.\ von Wright \& H.\ Nyman (eds.), trans. C.\ G. Luckhardt \& A.\ E.\ Aue, Oxford: Basil Blackwell, 1980.

\bibitem{Zhou2013}
Zhou, Chunlai, `Belief functions on distributive lattices', \textit{Artificial Intelligence}, \textbf{201} (2013): 1--31.

\end{thebibliography}
\end{document}